\newtheorem{theorem}{Theorem}[section]
\newtheorem{corollary}[theorem]{Corollary}
\newtheorem{lemma}[theorem]{Lemma}
\newtheorem{proposition}[theorem]{Proposition}
\newtheorem{conjecture}[theorem]{Conjecture}
\theoremstyle{definition}
\newtheorem{example}[theorem]{Example}
\newtheorem{remark}[theorem]{Remark}
\title{Invariant theory of finite general linear groups modulo Frobenius powers}
\author{Pallav Goyal}
\date{}
\begin{document}

\maketitle

\begin{abstract}
We prove some cases of a conjecture of Lewis, Reiner and Stanton regarding Hilbert series corresponding to the action of $Gl_n(\mathbb{F}_q)$ on a polynomial ring modulo Frobenius powers. We also give a few conjectures about the invariant ring for certain cases that we don't prove completely.
\end{abstract}

\section{Introduction} \label{sec:intro}

The subject of this paper is a conjecture inspired by the following celebrated result by L.E. Dickson \cite{LED}:

\begin{theorem} [Dickson]
When $G=Gl_n(\mathbb{F}_q)$ acts via invertible linear substitution of variables on the polynomial ring $S = \mathbb{F}_q[x_1,x_2,\cdots,x_n]$, the $G$-invariant ring is given as:
$$S^G = \mathbb{F}_q[D_{n,0},D_{n,1},\cdots,D_{n,n-1}].$$
\end{theorem}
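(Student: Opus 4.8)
The plan is to obtain the Dickson invariants as the coefficients of a single ``universal'' polynomial and then run a rank and transcendence-degree argument. Let $S_1\subset S$ denote the $n$-dimensional $\mathbb{F}_q$-space of linear forms, and put $F(T)=\prod_{w\in S_1}(T-w)\in S[T]$, a monic polynomial of degree $q^n$ in $T$. Since $G=Gl_n(\mathbb{F}_q)$ acts on $S_1$ by $\mathbb{F}_q$-linear bijections, it merely permutes the factors of $F(T)$, so all coefficients of $F(T)$ lie in $S^G$. The set of roots of $F(T)$ is precisely the $\mathbb{F}_q$-subspace $S_1$; by the classical fact that a monic separable polynomial over a field containing $\mathbb{F}_q$ whose roots form an $\mathbb{F}_q$-subspace is $\mathbb{F}_q$-linearized (equivalently, by exhibiting $F(T)$ as a Moore determinant), one gets
\[
F(T)=T^{q^n}+\sum_{i=0}^{n-1}(-1)^{\,n-i}D_{n,i}\,T^{q^i},
\]
which is exactly the defining property of the $D_{n,i}\in S^G$. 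Giving $T$ and every $x_j$ degree $1$ makes $F(T)$ homogeneous of degree $q^n$, so $\deg D_{n,i}=q^n-q^i$.

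Next I would set $A=\mathbb{F}_q[D_{n,0},\dots,D_{n,n-1}]\subseteq S^G\subseteq S$ and build the ring-theoretic skeleton. Each variable $x_j$ lies in $S_1$, hence is a root of the monic polynomial $F(T)\in A[T]$, so $x_j$ is integral over $A$; since $S$ is a finitely generated $\mathbb{F}_q$-algebra, it follows that $S$ is a finitely generated $A$-module. Comparing transcendence degrees over $\mathbb{F}_q$, which is $n$ for $S$ and hence also for the subring $A$ over which $S$ is algebraic, while $A$ is generated by $n$ elements, shows that $D_{n,0},\dots,D_{n,n-1}$ are algebraically independent, so $A$ is a graded polynomial ring. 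Then $\prod_{i=0}^{n-1}\deg D_{n,i}=\prod_{i=0}^{n-1}(q^n-q^i)=q^{\binom{n}{2}}\prod_{j=1}^{n}(q^j-1)=|G|$, and since $\mathrm{Hilb}(S,t)=(1-t)^{-n}$ and $\mathrm{Hilb}(A,t)=\prod_{i=0}^{n-1}(1-t^{q^n-q^i})^{-1}$, the generic rank of $S$ as an $A$-module, namely $\lim_{t\to1}\mathrm{Hilb}(S,t)/\mathrm{Hilb}(A,t)$ (which also equals $[\,\operatorname{Frac}(S):\operatorname{Frac}(A)\,]$ because $S$ is a domain integral over $A$), equals $|G|$. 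Thus $[\,\mathbb{F}_q(x_1,\dots,x_n):\operatorname{Frac}(A)\,]=|G|$.

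To finish I would compare fraction fields, a route that is forced on us because Molien's formula is unavailable in this modular situation. The action of $G$ on $\mathbb{F}_q(x_1,\dots,x_n)$ is faithful, so Artin's theorem gives $[\,\mathbb{F}_q(x_1,\dots,x_n):\mathbb{F}_q(x_1,\dots,x_n)^G\,]=|G|$, and the usual argument of multiplying a $G$-fixed fraction $f/g$ by $\prod_{\sigma\neq 1}\sigma(g)$ shows $\mathbb{F}_q(x_1,\dots,x_n)^G=\operatorname{Frac}(S^G)$. Sandwiching $\operatorname{Frac}(A)\subseteq\operatorname{Frac}(S^G)\subseteq\mathbb{F}_q(x_1,\dots,x_n)$, where the total extension and the upper one both have degree $|G|$, forces $\operatorname{Frac}(A)=\operatorname{Frac}(S^G)$. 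Since $S^G$ lies in $S$ and is therefore integral over $A$, while $A$, being a polynomial ring, is integrally closed in its fraction field, we conclude $S^G=A=\mathbb{F}_q[D_{n,0},\dots,D_{n,n-1}]$. I expect the only step that is not routine commutative algebra to be the first one: establishing the additive form of $F(T)$ with $\deg D_{n,i}=q^n-q^i$, since it is precisely this input that makes the product of the degrees equal $|G|$ and drives the entire argument.
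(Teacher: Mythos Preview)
Your argument is correct and is essentially the standard modern proof of Dickson's theorem (via integrality, the degree product $\prod_i(q^n-q^i)=|G|$, and normality of the polynomial subring $A$). However, the paper does not give its own proof of this statement: Dickson's theorem is quoted in the introduction as a classical result with a citation to \cite{LED}, and is used only as motivation and background for the conjecture under study. So there is nothing in the paper to compare your proof against; you have supplied a valid proof where the paper merely cites one.

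One minor remark on conventions: the paper writes the defining identity as $\prod(t+l(\mathbf{x}))=\sum_{i=0}^n D_{n,i}t^{q^i}$, while you use $\prod_{w}(T-w)$. Since the set of linear forms is closed under negation this is harmless, but your signs $(-1)^{n-i}$ in front of $D_{n,i}$ are then absorbed into the paper's sign-free definition; it would be worth aligning the normalisation if you insert this proof into the paper.
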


Here, the $D_{n,i}$'s are the Dickson polynomials defined via the identity: 
$$\prod (t + l(\textbf{x})) = \sum_{i=0}^n D_{n,i}t^{q^i}$$
where the product is taken over all linear functionals $l(\textbf{x})$ over $S$. Building on this, we wish to consider the action of $G$ on the ring $Q = S/\mathfrak{m}^{[q^m]}$ where $\mathfrak{m}^{[q^m]} = (x_1^{q^m},x_2^{q^m},\cdots,x_n^{q^m})$. As $\mathfrak{m}^{[q^m]}$ remains invariant under the action of $G$, the action of $G$ on $Q$ is well defined. We wish to describe the structure of the invariant ring $Q^G$ and work towards proving the following conjecture by J. Lewis, V. Reiner and D. Stanton \cite{LRS}:

\begin{conjecture} \label{conj:prob}
The Hilbert series for the above action of $Gl_n(\mathbb{F}_q)$ on $Q$ is given by:
$$Hilb(Q^G,t)=\sum\limits_{k=0}^{\min(n,m)}t^{(n-k)(q^m-q^k)}{m \brack k}_{q,t}$$
where ${m \brack k}_{q,t} = \prod\limits_{i=0}^{k-1} \frac{1-t^{q^m-q^i}}{1-t^{q^k-q^i}}$.
\end{conjecture}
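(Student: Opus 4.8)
Write $R=S^G=\mathbb{F}_q[D_{n,0},\dots,D_{n,n-1}]$, $\mathfrak{a}=\mathfrak{m}^{[q^m]}=(x_1^{q^m},\dots,x_n^{q^m})$, and $N=n(q^m-1)$. The plan is to reduce $Hilb(Q^G,t)$ to the subalgebra of $Q$ generated by the Dickson invariants, and then to a commutative-algebra computation.

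\emph{Reductions and structure.} The case $n=1$ is immediate since $Gl_1(\mathbb{F}_q)=\mathbb{F}_q^{\times}$ has order prime to $p$. For general $n$ I would first record three facts. (1) Since $g_{li}^{q^j}=g_{li}$ for $g=(g_{li})\in Gl_n(\mathbb{F}_q)$, the span of $x_1^{q^j},\dots,x_n^{q^j}$ in $S$ is a copy of the defining module for every $j$, and expanding exponents base $q$ yields a graded $Gl_n(\mathbb{F}_q)$-module isomorphism $Q\cong\big(S/\mathfrak{m}^{[q]}\big)^{\otimes m}$, with the $j$-th factor graded with degrees scaled by $q^j$; thus the whole problem is governed by the single module $S/\mathfrak{m}^{[q]}$ and its tensor powers. (2) $Q$ is a graded complete intersection, hence Gorenstein, with $1$-dimensional socle in degree $N$ spanned by $x_1^{q^m-1}\cdots x_n^{q^m-1}$; a direct check on the torus, permutation matrices and transvections shows $G$ fixes this socle generator, so $Q_{N-d}\cong (Q_d)^{*}$ $G$-equivariantly, and indeed the conjectural formula is a palindrome of degree $N$. (3) The images $\overline{D_{n,i}}\in Q^G$ generate a subalgebra $\overline{R}=R/(R\cap\mathfrak{a})\subseteq Q^G$.

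\emph{Two halves.} I would prove (i) $Q^G=\overline{R}$, i.e.\ every invariant of $Q$ lifts to a Dickson polynomial; and (ii) $Hilb\big(R/(R\cap\mathfrak{a}),t\big)$ equals the right-hand side. For (i): from $0\to\mathfrak{a}\to S\to Q\to 0$ one gets $0\to\mathfrak{a}^G\to R\to Q^G\to H^1(G,\mathfrak{a})$, so (i) is the vanishing of the connecting map $Q^G\to H^1(G,\mathfrak{a})$; resolving $Q$ by the $G$-equivariant Koszul complex on $\langle x_1^{q^m},\dots,x_n^{q^m}\rangle\cong V$ reduces this to killing a few explicit classes in $H^{\bullet}(G,\wedge^{\bullet}V\otimes S)$. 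For (ii): one identifies $R\cap\mathfrak{a}=\mathfrak{a}^G$ inside the Dickson polynomial ring, again via the equivariant Koszul resolution and the covariant modules $(\wedge^{j}V\otimes S)^G$. The expectation, which must be made precise, is that $\overline R$ decomposes as $\bigoplus_{k=0}^{\min(n,m)}C_k$ by a ``rank'', where $C_k$ is a polynomial ring on $k$ Dickson-type variables (of degrees $q^k-q^i$) modulo a regular sequence of degrees $q^m-q^0,\dots,q^m-q^{k-1}$ --- contributing $\prod_{i=0}^{k-1}\frac{1-t^{q^m-q^i}}{1-t^{q^k-q^i}}={m\brack k}_{q,t}$ --- shifted by $(n-k)(q^m-q^k)$, the degree of a ``top power'' of the remaining $n-k$ variables; here $k$ plays the role of the dimension of a subspace of $\mathbb{F}_q^m$, and the identification of ${m\brack k}_{q,t}$ as the corresponding generating function is the combinatorial core.

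\emph{Feasible cases and the obstacle.} For $m=1$ step (i) reduces to the assertion that $S/\mathfrak{m}^{[q]}$ contains the trivial module as a composition factor exactly twice (in degrees $0$ and $n(q-1)$); since $\dim M^G\le[M:\mathrm{triv}]$ and $1$ and the socle are already invariant, this gives $Hilb(Q^G,t)=1+t^{n(q-1)}$, with (ii) vacuous. For $n=2$ and all $q,m$, both steps can be done by direct manipulation of $D_{2,0},D_{2,1}$ modulo $\mathfrak{m}^{[q^m]}$: one verifies that $\overline R$ exhausts $Q^G$ and reads off its Hilbert series from the relations among $\overline{D_{2,0}},\overline{D_{2,1}}$. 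The main obstacle is step (i) in general: modularity allows $Q^G$ to be a priori larger than $\overline R$, and controlling $H^1(G,\mathfrak{m}^{[q^m]})$ uniformly in $n,m,q$ appears to need new ideas; moreover, even granting (i), turning the heuristic rank decomposition of $R\cap\mathfrak{m}^{[q^m]}$ into the precise $q,t$-binomial identity is delicate. I would therefore expect to settle $n\le 2$, $m=1$, and perhaps further small cases or $q=2$, leaving the general conjecture open.
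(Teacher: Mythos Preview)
Your central structural claim --- that $Q^G$ coincides with the image $\overline{R}$ of the Dickson ring $S^G$ --- is already false in the first nontrivial case, namely $n=m=2$ with $q\ge 3$. The quotient map $S\to Q$ is grading-preserving, so $\overline{R}$ is supported only in degrees of the form $a(q^2-1)+b(q^2-q)$ with $a,b\in\mathbb{Z}_{\ge 0}$. But the invariant
\[
y_2 \;=\; x_1^{q^2-q}x_2^{2(q-1)}+x_1^{(q-1)^2}x_2^{3(q-1)}+\cdots+x_1^{2(q-1)}x_2^{q^2-q}\in Q^G
\]
has degree $(q+2)(q-1)=q^2+q-2$, and for $q\ge 3$ this is \emph{not} such a combination (e.g.\ for $q=3$ one would need $8a+6b=10$). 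Hence $y_2\in Q^G\setminus\overline{R}$, and the surjectivity you rely on in step~(i) fails even for $n=2$. Your assertion that ``for $n=2$ and all $q,m$ \ldots one verifies that $\overline R$ exhausts $Q^G$'' is therefore incorrect; it happens to hold for $q=2$ (the paper treats exactly this in Section~\ref{sec:q=2}, and even there part of it remains conjectural), but not otherwise. In particular the connecting map $Q^G\to H^1(G,\mathfrak{m}^{[q^m]})$ you propose to kill is genuinely nonzero, so the Koszul/cohomology scheme cannot go through as stated.

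The paper does not attempt the conjecture in full; it settles $m=1$ and $m=2$ by a direct, elementary route that bypasses $S^G$ entirely. For $m=2$ it (a) writes down explicit symmetric polynomials $y_{n,k}$ ($0\le k\le q$) and $z_n=\prod_j x_j^{q^2-1}$ and checks invariance on generators of $G$ (diagonals, permutations, one transvection) via the recursion $y_{n,k}=\sum_i y_{n-1,\,q+k-i}\,x_n^{i(q-1)}$; then (b) proves these exhaust $Q^G$ by induction on $n$, expanding an arbitrary invariant in powers of $x_n$ with coefficients in the $(n{-}1)$-variable invariant ring and pinning down the admissible degrees. No lifting to $S^G$ is used, and indeed the invariants $y_{n,k}$ for $k\ge 2$ are precisely the witnesses that your $\overline{R}$ is too small.
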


The layout of the paper is as follows. Section~\ref{sec:m=1} deals with the fairly easy case of the problem when $m=1$. Although this case has already been dealt with in \cite{LRS}, it will set the stage for the following sections to flow more naturally. Section~\ref{sec:m=2} tackles the case $m=2$ in $2$ stages, first assuming that $n=2$, and then, generalising for an arbitrary $n$. Section~\ref{sec:k=1} uses the ideas developed in Section~\ref{sec:m=2} to talk about the `$k=1$' case of the Hilbert series. Section~\ref{sec:SO} is about the action of Steenrod operations on the invariant polynomials when $m=2$. Section~\ref{sec:n=k=m-1} develops certain ideas that approach a solution for the case $n=k=m-1$. Section~\ref{sec:q=2} records certain simplifications that arise when we take $\mathbb{F}_2$ as the underlying field. Finally, Section~\ref{sec:para} deals with the parabolic generalization of the conjecture with focus on the case $m=2$.

\subsection{Preliminaries}

Before starting off, we state a few results which will be used multiple times throughout the paper.

\begin{lemma} \label{lemma:intro1}
A polynomial is invariant under the action of $Gl_n(\mathbb{F}_q)$ if it is invariant under diagonal matrices, permutations of the variables, and the substitution $x_1 \to x_1 + x_2$.
\end{lemma}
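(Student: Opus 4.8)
The plan is purely group-theoretic: I will show that the three families of substitutions in the statement — all diagonal matrices, all coordinate permutations, and the single transvection $x_1 \mapsto x_1 + x_2$ — together generate the whole group $Gl_n(\mathbb{F}_q)$. Granting this, a polynomial fixed by each of the listed generators is fixed by every word in them, hence by all of $Gl_n(\mathbb{F}_q)$, which is exactly the assertion.

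First I would produce all elementary transvections from the given data. Conjugating the substitution $x_1 \mapsto x_1 + x_2$ (fixing the remaining variables) by the permutation sending $1 \mapsto i$ and $2 \mapsto j$ yields $x_i \mapsto x_i + x_j$ for every pair $i \neq j$. Conjugating $x_i \mapsto x_i + x_j$ by the diagonal matrix that rescales the single coordinate $x_j$ by a unit $c \in \mathbb{F}_q^{\times}$ turns it into $x_i \mapsto x_i + c\, x_j$ (and $c = 0$ gives the identity). In matrix language, the subgroup we are considering thus contains every elementary matrix $E_{ij}(c) = I + c\, e_{ij}$ with $i \neq j$ and $c \in \mathbb{F}_q$.

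Next I would invoke Gaussian elimination over the field $\mathbb{F}_q$: any invertible matrix can be brought to the identity by a sequence of operations, each of which either adds a scalar multiple of one row to another (left multiplication by some $E_{ij}(c)$) or rescales a single row by a unit (left multiplication by a diagonal matrix); row swaps, when convenient, are themselves products of three transvections and one diagonal matrix. Hence the diagonal matrices together with all the $E_{ij}(c)$ already generate $Gl_n(\mathbb{F}_q)$ — equivalently, the $E_{ij}(c)$ generate $Sl_n(\mathbb{F}_q)$, and the diagonal matrices surject onto $Gl_n(\mathbb{F}_q)/Sl_n(\mathbb{F}_q) \cong \mathbb{F}_q^{\times}$ via the determinant. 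Combined with the previous paragraph, this finishes the proof.

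The substantive point is the generation statement, and the step that needs the most attention is bookkeeping rather than depth: checking that no further generators are required (permutations and diagonal rescalings are recovered inside the group generated by the $E_{ij}(c)$ and the diagonal subgroup), and noting the harmless degenerate case $q = 2$, where $\mathbb{F}_q^{\times}$ is trivial, so the diagonal matrices contribute nothing and one simply observes that the matrices $E_{ij}(1)$ alone generate $Sl_n(\mathbb{F}_2) = Gl_n(\mathbb{F}_2)$.
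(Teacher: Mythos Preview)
Your argument is correct and follows essentially the same route as the paper's: both reduce the lemma to the group-theoretic fact that diagonal matrices, permutation matrices, and elementary transvections generate $Gl_n(\mathbb{F}_q)$, with the single transvection $x_1 \mapsto x_1 + x_2$ sufficing once permutations and diagonal rescalings are available. The paper compresses the conjugation step into the phrase ``by symmetry,'' whereas you spell out how conjugation by permutations and diagonal matrices yields every $E_{ij}(c)$ and then appeal to Gaussian elimination; the content is the same.
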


\begin{proof}
First, we note that the group $Gl_n(\mathbb{F}_q)$ is generated by diagonal matrices, permutation matrices and elementary matrices. Hence, by symmetry, checking that a given polynomial is invariant amounts to checking its invariance under the former two and the substitution $x_1 \rightarrow x_1+x_2$.
\end{proof}

\begin{lemma} \label{lemma:intro}
$Q$ has a basis of monomials of the form $x_1^{i_1} \cdots x_n^{i_n}$ where $0 \leq i_j \leq q^n-1$ as an $\mathbb{F}_q$-vector space. Furthermore, if $y \in Q^G$, then $y$ is a linear combination of monomials where each $i_j$ is divisible by $q-1$.
\end{lemma}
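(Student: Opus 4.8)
The plan is to prove the two assertions in turn, both by elementary means. For the first, observe that $\mathfrak{m}^{[q^m]} = (x_1^{q^m}, \dots, x_n^{q^m})$ is a monomial ideal of $S$, hence is spanned over $\mathbb{F}_q$ precisely by those monomials $x_1^{i_1}\cdots x_n^{i_n}$ for which at least one $i_j \geq q^m$. Since the set of all monomials is an $\mathbb{F}_q$-basis of $S$, the images in $Q$ of the remaining monomials --- those with $0 \leq i_j \leq q^m - 1$ for every $j$ --- form an $\mathbb{F}_q$-basis of $Q = S/\mathfrak{m}^{[q^m]}$. (Equivalently, $x_1^{q^m},\dots,x_n^{q^m}$ is a regular sequence in $S$, so $Q$ is a monomial complete intersection with this evident basis.)

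For the second assertion, let $y \in Q^G$ and expand $y = \sum_I c_I\, x^I$ in the above basis, the sum being over tuples $I = (i_1,\dots,i_n)$ with $0 \leq i_j \leq q^m - 1$. By Lemma~\ref{lemma:intro1}, $y$ is in particular fixed by every diagonal matrix. A diagonal matrix $g = \mathrm{diag}(a_1,\dots,a_n)$ with $a_j \in \mathbb{F}_q^\times$ sends $x^I$ to $\bigl(\prod_j a_j^{i_j}\bigr)x^I$, which is again a scalar multiple of a basis monomial; comparing coefficients in $Q$ and using linear independence of the basis monomials, invariance of $y$ gives $c_I = \bigl(\prod_j a_j^{i_j}\bigr) c_I$ for every $I$. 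Hence $c_I \neq 0$ forces $\prod_j a_j^{i_j} = 1$ for all $a_1,\dots,a_n \in \mathbb{F}_q^\times$; specializing $a_k = a$ and $a_j = 1$ for $j \neq k$ yields $a^{i_k} = 1$ for all $a \in \mathbb{F}_q^\times$, which --- since $\mathbb{F}_q^\times$ is cyclic of order $q-1$ --- holds if and only if $(q-1)\mid i_k$. Letting $k$ range over $1,\dots,n$ shows every exponent in every monomial occurring in $y$ is divisible by $q-1$.

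There is no serious obstacle here; the only subtlety worth flagging is that the torus of diagonal matrices permutes, up to scalars, the chosen monomial basis of the quotient $Q$ (not merely of $S$), so the coefficient comparison is legitimate --- but this is immediate, since rescaling a monomial whose exponents all lie in $[0,q^m-1]$ leaves those exponents unchanged. Note also that only the diagonal subgroup $(\mathbb{F}_q^\times)^n$ is used for the second statement; the permutation matrices and the transvection $x_1 \mapsto x_1 + x_2$ play no role in it.
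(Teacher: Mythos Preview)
Your proof is correct and follows essentially the same approach as the paper: the paper dismisses the first claim as ``rather obvious'' and for the second simply notes that $y$ must be invariant under diagonal matrices, which is exactly the mechanism you spell out in full (you merely supply the details the paper omits). Incidentally, you have silently corrected the paper's typo in the exponent bound --- the statement should read $0 \leq i_j \leq q^m-1$, as you write, not $q^n-1$ --- and your appeal to Lemma~\ref{lemma:intro1} is not really needed, since diagonal matrices lie in $G$ outright.
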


\begin{proof}
The first claim of the lemma is rather obvious. For the second claim, we note that for $y$ to be invariant, it must be invariant under the action of the diagonal matrices.
\end{proof}

Finally, we state a well known result from number theory. For proof, see \cite[Page 126]{LT}.

\begin{theorem} [Lucas' Theorem]
Given positive integers $m$ and $n$, if $n=n_0+n_1p+n_2p^2+\cdots+n_dp^d$ and $m=m_0+m_1p+m_2p^2+\cdots+m_dp^d$ are their respective base $p$ expansions, then:
$${n \choose m} \equiv {n_0 \choose m_0}{n_1 \choose m_1} \cdots {n_d \choose m_d} \pmod{p}.$$
\end{theorem}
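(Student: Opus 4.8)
The plan is to give the standard generating-function proof, working in the polynomial ring $\mathbb{F}_p[x]$ and exploiting the characteristic-$p$ identity $(1+x)^p \equiv 1 + x^p$.

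First I would record the base case. For $0 < j < p$ the integer $\binom{p}{j} = \tfrac{p!}{j!\,(p-j)!}$ is divisible by $p$, since $p$ divides the numerator but not the denominator; hence $(1+x)^p \equiv 1 + x^p \pmod{p}$. A one-line induction on $i$ then upgrades this to $(1+x)^{p^i} \equiv 1 + x^{p^i} \pmod{p}$ for every $i \ge 0$: since the Frobenius $a \mapsto a^p$ is a ring homomorphism on $\mathbb{F}_p[x]$, raising the congruence for $i$ to the $p$-th power and applying the case just proved yields $(1+x)^{p^{i+1}} \equiv (1+x^{p^i})^p \equiv 1 + x^{p^{i+1}} \pmod p$.

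Next, using the base-$p$ expansion $n = \sum_{i=0}^{d} n_i p^i$, I would factor
$$(1+x)^n \;=\; \prod_{i=0}^{d}\bigl((1+x)^{p^i}\bigr)^{n_i} \;\equiv\; \prod_{i=0}^{d}\bigl(1 + x^{p^i}\bigr)^{n_i} \pmod{p},$$
and expand each factor by the ordinary binomial theorem as $\sum_{j_i=0}^{n_i}\binom{n_i}{j_i}x^{j_i p^i}$. Multiplying these out, the coefficient of $x^m$ on the right-hand side equals $\sum \prod_{i=0}^{d}\binom{n_i}{j_i}$, the sum ranging over all tuples $(j_0,\dots,j_d)$ with $0 \le j_i \le n_i$ and $\sum_{i} j_i p^i = m$. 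The one point that needs care is the following observation: since each digit satisfies $n_i \le p-1$, any such tuple has $0 \le j_i \le p-1$, so $\sum_i j_i p^i = m$ is literally a base-$p$ representation of $m$; by uniqueness of that representation we must have $j_i = m_i$ for all $i$ (and if some $m_i > n_i$ there is no admissible tuple, which is consistent with $\binom{n_i}{m_i} = 0$). Hence the coefficient of $x^m$ on the right is exactly $\prod_{i=0}^{d}\binom{n_i}{m_i}$, while on the left it is $\binom{n}{m}$; equating the two in $\mathbb{F}_p$ gives the asserted congruence. I do not anticipate any real obstacle here — the argument is short and elementary, which is presumably why the paper contents itself with the citation to \cite{LT}.
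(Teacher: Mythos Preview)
Your proof is correct; this is the standard generating-function argument for Lucas' Theorem and every step is sound. The paper itself does not prove the statement at all --- it simply cites \cite[Page 126]{LT} --- so there is nothing to compare against, but the argument you give is exactly the classical one found in most references, and you were right to anticipate that the paper would not reproduce it.
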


\section{The case $m=1$} \label{sec:m=1}

We have $Q=\mathbb{F}_q[x_1,x_2,\cdots ,x_n]/\mathfrak{m}^{[q]}$ where $\mathfrak{m}^{[q]}=(x_1^q,x_2^q,\cdots ,x_n^q)$. Let $y$ be an element of the invariant ring $Q^G$. Then, by Lemma~\ref{lemma:intro}, $y$ must be a linear combination of elements of the form $x_1^{i_1}x_2^{i_2}\cdots x_n^{i_n}$ where $i_j=0$ or $q-1$ for all $j$. Now, let $bx_1^{j_1}x_2^{j_2}\cdots x_n^{j_n}$ be a monomial occurring in $y$ where $b \in \mathbb{F}_q \setminus \{0\}$. 

I claim that $j_1=j_2=\cdots =j_n$. Suppose not. Without loss of generality, let $j_1=j_2=\cdots=j_r=q-1$ and $j_{r+1}=\cdots =j_n=0$ where $0<r<n$. Hence, the monomial is $bx_1^{q-1}x_2^{q-1}\cdots x_r^{q-1}$. If we perform the substitution $x_r \rightarrow x_r + x_{r+1}$ in $y$, the coefficient of $x_1^{q-1}x_2^{q-1}\cdots x_r^{q-2}x_{r+1}$ in $y$ becomes non-zero. But this isn't possible as $y$ does not contain any such monomial except possibly when $q=2$. In that particular case, the coefficient of $x_1^{q-1}x_2^{q-1}\cdots x_r^{q-2}x_{r+1}=x_1x_2\cdots x_{r-1}x_{r+1}$ in $y$ either changes from $1$ to $0$ or from $0$ to $1$, which isn't allowed in the invariant $y$. Hence, we have a contradiction.

Therefore, $y$ is of the form $c_0 + c_1x_1^{q-1}x_2^{q-1}\cdots x_n^{q-1}$. It is trivial to see that such a $y$ is invariant under the action of $G$. Hence, the Hilbert series is $1+t^{n(q-1)}$, which agrees with what has been conjectured.

\section{The case $m=2$} \label{sec:m=2}

We first solve the problem for the case when $n=2$, and then show how the solution can be generalised for arbitrary $n$.

\subsection{$m=2, n=2$} \label{subsec:n=2}

We have $Q=\mathbb{F}_q[x_1,x_2]/\mathfrak{m}^{[q^2]}$ where $\mathfrak{m}^{[q^2]}=(x_1^{q^2},x_2^{q^2})$. By Lemma~\ref{lemma:intro}, we see that any $y \in Q^G$ is a linear combination of elements of the form $x_1^{i_1}x_2^{i_2}$ where $0 \leq i_1, i_2 \leq q^2-1$ and $(q-1)|i_1, (q-1)|i_2$. We'll now prove the following theorem which is in accord with the conjecture at hand.

\begin{theorem} \label{theo:m=2,n=2}
The Hilbert series for $Q^G$ is
$$1 + t^{q^2-q}(1+ t^{q-1} + t^{2(q-1)} + \cdots + t^{q^2-q}) + t^{2(q^2-1)}.$$ 
A basis of the invariant ring as an $\mathbb{F}_q$-vector space is given as 
$$1, x_1^{k(q-1)}x_2^{k(q-1)}\frac{x_1^{(q-k+1)(q-1)} - x_2^{(q-k+1)(q-1)}}{x_1^{q-1} - x_2^{q-1}} \text{and }  x_1^{q^2-1}x_2^{q^2-1}$$
for $0\leq k \leq q$.
\end{theorem}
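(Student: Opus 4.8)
The plan is to follow the same strategy that worked for $m=1$: by Lemma~\ref{lemma:intro}, an invariant $y\in Q^G$ is an $\mathbb{F}_q$-linear combination of monomials $x_1^{a(q-1)}x_2^{b(q-1)}$ with $0\le a,b\le q+1$, and by Lemma~\ref{lemma:intro1} it suffices to impose invariance under the single substitution $x_1\to x_1+x_2$ (symmetry under $x_1\leftrightarrow x_2$ is automatic once we check it). First I would organize the monomial basis by total degree $d=(a+b)(q-1)$; since $0\le a,b\le q+1$, the possible values of $a+b$ run from $0$ to $2(q+1)$, and the substitution $x_1\to x_1+x_2$ preserves total degree, so $Q^G$ decomposes as a direct sum over $d$ and I can analyze each graded piece separately.

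The heart of the argument is the following computation: expand $x_1^{a(q-1)}(x_1+x_2)$-wait, more precisely, apply $x_1\to x_1+x_2$ to $x_1^{a(q-1)}x_2^{b(q-1)}$, reduce modulo $(x_1^{q^2},x_2^{q^2})$, and extract coefficients. Writing $a(q-1)$ and $b(q-1)$ in base $q$ and using Lucas' Theorem, one finds that $\binom{a(q-1)}{j}\bmod p$ is controlled by the base-$q$ digits of $a(q-1)$, which for $1\le a\le q+1$ are easy to enumerate explicitly (e.g. $a(q-1)=(a-1)q+(q-a)$ for $1\le a\le q$, and $(q+1)(q-1)=q^2-1$ has digits $(q-1,q-1)$). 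This lets me write down the linear system that the coefficients $c_{a,b}$ of $y$ must satisfy in each degree $d$. I expect to find: in degree $0$ the space is spanned by $1$; in degree $2(q^2-1)$ it is spanned by $x_1^{q^2-1}x_2^{q^2-1}$ (the only monomial of that degree); and in each intermediate degree $d=(k+2)(q-1)$ for $0\le k\le q$ — matching the exponents $q^2-q+j(q-1)=(q+j)(q-1)$ in the conjectured Hilbert series — the solution space is exactly one-dimensional, spanned by the stated element $x_1^{k(q-1)}x_2^{k(q-1)}\cdot\frac{x_1^{(q-k+1)(q-1)}-x_2^{(q-k+1)(q-1)}}{x_1^{q-1}-x_2^{q-1}}$.

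To finish I would verify two things. For the \emph{upper bound} on dimensions: the coefficient-extraction above yields recurrences among the $c_{a,b}$ in a fixed degree that force the solution to be a scalar multiple of the claimed generator — this is the step I expect to be the main obstacle, since it requires carefully tracking which binomial coefficients vanish mod $p$ and showing the resulting system has rank one less than the number of monomials. For the \emph{lower bound} (that the listed elements really are invariant): note that $\sum_{i+j=N}x_1^{i(q-1)}x_2^{j(q-1)}$ is, up to the factor $\frac{\cdot}{x_1^{q-1}-x_2^{q-1}}$, a symmetric "complete homogeneous" expression in $x_1^{q-1},x_2^{q-1}$; invariance under $x_1\to x_1+x_2$ modulo $\mathfrak{m}^{[q^2]}$ can be checked by a direct expansion using that $(x_1+x_2)^{q-1}\equiv \sum_{i=0}^{q-1}x_1^ix_2^{q-1-i}$ and the Dickson-type telescoping, together with the fact that any term with an exponent $\ge q^2$ is killed. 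Once both bounds are in place, summing $t^d$ over the surviving degrees gives precisely $1+t^{q^2-q}(1+t^{q-1}+\cdots+t^{q^2-q})+t^{2(q^2-1)}$, which is the $n=m=2$ specialization of Conjecture~\ref{conj:prob}, completing the proof.
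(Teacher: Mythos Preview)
Your overall plan---grade by total degree, use Lemma~\ref{lemma:intro} to restrict to monomials $x_1^{a(q-1)}x_2^{b(q-1)}$ with $0\le a,b\le q+1$, and then impose invariance under $x_1\to x_1+x_2$ via Lucas' Theorem---is exactly the paper's strategy. But as written the proposal leaves the two substantive steps unexecuted, and one of them is not even mentioned.

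First, you never address the degrees where the invariant space is \emph{zero}-dimensional. Your sentence ``in each intermediate degree $d=(k+2)(q-1)$ for $0\le k\le q$\ldots the solution space is exactly one-dimensional'' (note the indexing slip: the basis element for parameter $k$ has degree $(q+k)(q-1)$, not $(k+2)(q-1)$) quietly assumes that degrees $k(q-1)$ with $0<k<q$ or $k=2q+1$ carry no invariants at all. The paper handles this with a separate lemma: for $k<q$ one writes $k=p^tr$ with $p\nmid r$, and after the substitution the coefficient of $x_1^{k(q-1)-p^t}x_2^{p^t}$ is $\binom{k(q-1)}{p^t}\equiv r(q-1)\not\equiv 0$, forcing the leading coefficient to vanish, then iterates. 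The $k=2q+1$ case needs its own check. This vanishing is not automatic from the rank-one claim you make for the ``good'' degrees.

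Second, for the uniqueness in degrees $q\le k\le 2q$, ``the resulting system has rank one less'' is precisely the content to be proved. The paper's device is to show first that the extreme coefficient $c_0$ (on $x_1^{q^2-1}x_2^{(k'-1)(q-1)}$) must vanish, then that $c_1\neq 0$, and then that each subsequent $c_i$ is determined by examining the coefficient of $x_1^{(q-i+1)(q-1)-p^t}x_2^{(k'+i-1)(q-1)+p^t}$ where $p^t\|(q-i+1)$; the point is that $\binom{(q-i+1)(q-1)}{p^t}\not\equiv 0$ while earlier terms don't contribute. Your base-$q$ digit bookkeeping is the right tool, but the argument needs this specific choice of test monomials to produce a triangular system. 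Finally, for the invariance (lower bound) the paper's Proposition~\ref{prop:k=1} uses the generating-function identity $\sum_{j}(1+x)^{j(q-1)}=\frac{x^{q^m-1}-1}{x^{q-1}-1}$ rather than a telescoping in $x_1,x_2$; your sketch of ``Dickson-type telescoping'' would need to be made equally explicit.
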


In order to prove this, we'll show that there exist unique invariant polynomials in $Q$ of degrees $q^2-q + c(q-1)$ and $2(q^2-1)$ where $0 \leq c \leq q$, and that there do not exist any other invariant polynomials (up to scalar multiples). Without loss of generality, let us assume that $y$ is a homogeneous $G$-invariant polynomial. Then, the degree $d$ of $y$ must be some multiple of $q-1$ between $1$ and $2(q^2-1)$. Let $d=k(q-1)$, where $0 \leq k \leq 2(q+1)$.

\begin{lemma}
If $k<q$ or $k=2q+1$, we don't have any invariant polynomials $y$ of degree $k(q-1)$.
\end{lemma}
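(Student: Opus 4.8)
Write $d=k(q-1)$. By Lemma~\ref{lemma:intro}, a homogeneous invariant $y$ of degree $d$ is an $\mathbb{F}_q$-combination of monomials $x_1^{a(q-1)}x_2^{b(q-1)}$ with $a+b=k$ and $0\le a,b\le q+1$; invariance under the transposition $x_1\leftrightarrow x_2$ (a permutation matrix) forces $c_{a,b}=c_{b,a}$. I treat the two ranges separately, using Lemma~\ref{lemma:intro1} so that only the substitution $x_1\mapsto x_1+x_2$ needs to be tested.

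\emph{Case $1\le k\le q-1$.} Here every monomial above has both exponents at most $(q-1)^2<q^2$, so the natural lift $\tilde y\in S$ of $y$ has degree $d<q^2$. For any $g\in Gl_2(\mathbb{F}_q)$ the polynomial $g\cdot\tilde y$ is again homogeneous of degree $d<q^2$, hence already lies in the span of the monomial basis of $Q$; since $g\cdot y=y$ in $Q$, we get $g\cdot\tilde y=\tilde y$ in $S$. Thus $\tilde y\in S^G=\mathbb{F}_q[D_{2,0},D_{2,1}]$ by Dickson's theorem, and since $\deg D_{2,i}=q^2-q^i$, every nonzero element of $S^G$ is either a nonzero constant or has degree at least $q^2-q$. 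But $d=k(q-1)\le (q-1)^2=q^2-2q+1<q^2-q$ for $q\ge 2$, and $d>0$, so $\tilde y=0$, i.e. $y=0$.

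\emph{Case $k=2q+1$.} Now $a+b=2q+1$ with $0\le a,b\le q+1$ forces $\{a,b\}=\{q,q+1\}$, so $y=c\,(x_1^{q^2-q}x_2^{q^2-1}+x_1^{q^2-1}x_2^{q^2-q})$ for some $c\in\mathbb{F}_q$. Apply $x_1\mapsto x_1+x_2$ and reduce modulo $\mathfrak{m}^{[q^2]}$ (discard any monomial with an exponent $\ge q^2$), and track the coefficient of $x_1^{q^2-q}x_2^{q^2-1}$. Expanding $(x_1+x_2)^{q^2-q}x_2^{q^2-1}$, the only term surviving the reduction is $x_1^{q^2-q}x_2^{q^2-1}$ itself (coefficient $1$); expanding $(x_1+x_2)^{q^2-1}x_2^{q^2-q}$ contributes, among the surviving terms, $\binom{q^2-1}{q-1}x_1^{q^2-q}x_2^{q^2-1}$. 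By Lucas' theorem $\binom{q^2-1}{q-1}\equiv 1\pmod p$, since all base-$p$ digits of $q^2-1$ equal $p-1$. Hence the coefficient of $x_1^{q^2-q}x_2^{q^2-1}$ after the substitution is $2c$, whereas in $y$ it is $c$; invariance gives $2c=c$, so $c=0$ (in any characteristic).

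\emph{On the difficulty.} The range $k<q$ is essentially immediate: low-degree invariants cannot interact with the Frobenius truncation, so they are governed directly by Dickson's theorem, whose smallest positive-degree generator has degree $q^2-q$. The only genuine computation is in the case $k=2q+1$, where one must first pin down that $y$ has the explicit two-term form and then run the truncated binomial expansion carefully; the crux is that the substitution $x_1\mapsto x_1+x_2$ reintroduces the monomial $x_1^{q^2-q}x_2^{q^2-1}$ with nonzero coefficient $\binom{q^2-1}{q-1}=1$. (As a sanity check, the same truncated expansion shows that the image of $D_{2,0}D_{2,1}$ in $Q$ — the only candidate $S$-invariant of this degree — vanishes, consistent with the lemma.)
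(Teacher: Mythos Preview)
Your proof is correct, and the two cases are handled by routes that differ from the paper's in instructive ways.

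For $1\le k\le q-1$, the paper argues directly with Lucas' theorem: writing $k=p^t r$ with $p\nmid r$, the substitution $x_1\mapsto x_1+x_2$ produces a monomial $x_1^{k(q-1)-p^t}x_2^{p^t}$ with nonzero coefficient, which forces the leading coefficient to vanish, and then one iterates. Your argument is cleaner and more structural: since the total degree $d=k(q-1)\le (q-1)^2<q^2$, the degree-$d$ piece of $Q$ coincides with that of $S$, so an invariant in $Q_d$ lifts to an honest $S$-invariant; Dickson's theorem then kills it by degree alone. This avoids any binomial bookkeeping and makes the reason transparent, at the (mild) cost of invoking the full Dickson result rather than just elementary combinatorics.

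For $k=2q+1$, both arguments apply $x_1\mapsto x_1+x_2$ to the forced two-term shape $y=c(x_1^{q^2-q}x_2^{q^2-1}+x_1^{q^2-1}x_2^{q^2-q})$, but track different monomials. The paper looks at $x_1^{q^2-2}x_2^{q^2-q+1}$, a monomial \emph{outside} the support of $y$, and gets coefficient $c(q^2-1)\equiv -c$; this requires $q>2$ (for $q=2$ that exponent is a multiple of $q-1$), so the paper splits off $q=2$ by hand. You instead track the monomial $x_1^{q^2-q}x_2^{q^2-1}$ already in the support of $y$: after reduction the two summands contribute $1$ and $\binom{q^2-1}{q-1}\equiv 1$, giving $2c=c$ and hence $c=0$ in every characteristic. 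Your choice yields a uniform argument with no case split.
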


\begin{proof}
First, consider the case $q=2$. If $k<q$, we have $k=1$, and so, $y$ is of the form $c_0x_1+c_1x_2$. If $k=2q+1=5$, $y$ is of the form $c_0x_1^3x_2^2+c_1x_1^2x_2^3$. Neither of these can be invariant under the substitutions $x_1\rightarrow x_1+x_2$ and $x_2 \rightarrow x_1+x_2$ unless $c_0=c_1=0.$

Now, suppose $q>2$ and $k<q$. Let $k=p^tr$ where $p \nmid r$. A general expression of $y$ is the form:
$$y=c_0x_1^{k(q-1)} + c_1x_1^{(k-1)(q-1)}x_2^{q-1} +\cdots+c_kx_2^{k(q-1)}.$$ 

Consider the substitution $x_1 \rightarrow x_1 + x_2$. Then, the coefficient of $x_1^{k(q-1) - p^t}x_2^{p^t}$ is equal to ${k(q-1) \choose p^t} = {p^tr(q-1) \choose p^t} \equiv r(q-1) \not\equiv 0 $ mod $p$ (where the second equality follows by Lucas's Theorem). Hence, the coefficient becomes non-zero after the substitution if $c_0 \neq 0$. So, let $c_0=0$. By exactly the same argument, we get that $c_j=0$ for all $j$. Hence, $y=0$ contradicting the fact that its degree is non-zero. Thus, such a $y$ can not be invariant. Hence, $k \geq q$.

Next, let $k=2q+1$. In this case, $y=c(x_1^{q^2-1}x_2^{q^2-q} + x_1^{q^2-q}x_2^{q^2-1})$. But here, the substitution $x_1 \rightarrow x_1 + x_2$ gives a non-zero coefficient to $x_1^{q^2-2}x_2^{q^2-q+1}$ unless $c=0$. Thus, we do not have an invariant of degree $(2q+1)(q-1)$.
\end{proof}

The cases left to investigate are $q \leq k \leq 2q$ and $k=2(q+1)$. For $k=2(q+1)$, there's a unique choice for $y$, i.e. $y = cx_1^{q^2-1}x_2^{q^2-1}$. It is easy to see that this $y$ remains invariant under all the possible transformations from $G$. 

\begin{lemma}
For $q\leq k\leq 2q$, we have unique (up to scalar multiples) invariant polynomials of degree $k(q-1)$.
\end{lemma}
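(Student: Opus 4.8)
The plan is to work directly with the general homogeneous ansatz for $y$ in degree $k(q-1)$ with $q \le k \le 2q$, exactly as in the previous lemma, but now the monomials are constrained by $0 \le i_1, i_2 \le q^2-1$, so some of the "interior" exponents are forced to exceed $q^2-1$ and the corresponding coefficients vanish. Writing $y = \sum_j c_j\, x_1^{(k-j)(q-1)} x_2^{j(q-1)}$, the condition $(k-j)(q-1) \le q^2-1 = (q-1)(q+1)$ forces $k-q-1 \le j \le q+1$, so the range of $j$ that survives is $\max(0,k-q-1) \le j \le \min(k, q+1)$. First I would record that this leaves exactly $2(q+1) - k + 1$ free coefficients in degree $k(q-1)$ for $q \le k \le 2q$ (e.g. $q+2$ of them when $k=q$, down to $1$ when $k=2q$), and the claim is that the invariance conditions cut this down to a one-dimensional space.

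Next I would impose invariance under $x_1 \to x_1 + x_2$ and extract, for each target monomial $x_1^a x_2^b$ with $a+b = k(q-1)$ and $a,b \le q^2-1$, the linear relation $\sum_j c_j \binom{(k-j)(q-1)}{a - ?}$-type coefficient $= 0$; more precisely the coefficient of $x_1^a x_2^b$ after substitution is $\sum_j c_j \binom{(k-j)(q-1)}{(k-j)(q-1) - a}$ over the surviving $j$, and these must vanish for all $(a,b)$ that are \emph{not} of the pure form $((k-j)(q-1), j(q-1))$. The key computational input is Lucas' theorem applied to the binomial coefficients $\binom{(k-j)(q-1)}{\ell}$: since $(q-1)$ in base $q$ is the string $(q-1, q-1, \ldots)$ truncated appropriately, the base-$q$ digits of $(k-j)(q-1)$ for $k-j \in \{1, \ldots, q+1\}$ are completely explicit, and this pins down exactly which binomials are nonzero mod $p$. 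I would use this to show the matrix of invariance relations has rank exactly $2(q+1)-k$, hence a one-dimensional kernel.

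Having established uniqueness (at most one-dimensional), I would then exhibit the explicit invariant to show the space is exactly one-dimensional: namely check that
$$x_1^{(k-q-1)(q-1)} x_2^{(k-q-1)(q-1)} \cdot \frac{x_1^{(2q+2-k)(q-1)} - x_2^{(2q+2-k)(q-1)}}{x_1^{q-1} - x_2^{q-1}}$$
(setting $c = 2q+2-k \in \{2, \ldots, q+2\}$, matching the statement of Theorem~\ref{theo:m=2,n=2} with $k_{\mathrm{thm}} = k - q - 1$) is a well-defined element of $Q$ and is $G$-invariant. Invariance under diagonal matrices and permutations is immediate from the symmetric shape; invariance under $x_1 \to x_1 + x_2$ is the only real check, and it follows because $\frac{x_1^{N} - x_2^{N}}{x_1 - x_2}$ with $N = (2q+2-k)$ and variables replaced by $(q-1)$-th powers is, up to the ideal $\mathfrak{m}^{[q^2]}$, a truncation of a Dickson-type expression; concretely one verifies that the numerator $x_1^{(2q+2-k)(q-1)} - x_2^{(2q+2-k)(q-1)}$ is divisible by $x_1^{q-1} - x_2^{q-1}$ as polynomials and that the substitution permutes the relevant linear factors modulo $\mathfrak{m}^{[q^2]}$.

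The main obstacle I anticipate is the rank computation in the second step: showing that the $(k(q-1)+1 - (\text{number of excluded high/low monomials})) \times (2(q+1)-k+1)$ system of Lucas-theorem binomial congruences has corank exactly one, uniformly in $q$ and $k$. The bookkeeping of which $\binom{(k-j)(q-1)}{\ell}$ vanish mod $p$ is manageable for small $k-j$ but the carries in base $q$ when $k-j$ ranges up to $q+1$ (where $(q+1)(q-1) = q^2 - 1$ has digits $(q-1, q-1)$, a clean case) versus intermediate values like $(k-j) = q$ giving $q(q-1) = q^2 - q$ with digits $(0, q-1)$ require careful case analysis; I expect the cleanest route is to triangularize the system by ordering the target monomials $x_1^a x_2^b$ by the $q$-adic valuation of $a$, so that each new relation eliminates one more coefficient, leaving a single free parameter at the end.
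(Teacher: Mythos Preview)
Your overall strategy---write the ansatz, impose invariance under $x_1\to x_1+x_2$, and then exhibit the explicit invariant---matches the paper's. But two points deserve comment.

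First, the uniqueness step. You frame it as a rank computation on the full linear system, flag it as the ``main obstacle,'' and only at the very end mention triangularizing by $q$-adic valuation. The paper goes straight to that triangular form and never computes a rank. Writing $k'=k-q$ and $y=\sum_i c_i\,x_1^{(q+1-i)(q-1)}x_2^{(k'-1+i)(q-1)}$, it first kills the extremal coefficient: if $c_0\neq 0$ then after the substitution the monomial $x_1^{q^2-2}x_2^{(k'-1)(q-1)+1}$ acquires coefficient $c_0\binom{q^2-1}{1}=-c_0\neq 0$, which is forbidden since $q^2-2$ is not a multiple of $q-1$. Next, if $c_1=0$, one takes the first $i$ with $c_i\neq0$, writes $q-i+1=p^t r$ with $p\nmid r$, and looks at the coefficient of $x_1^{(q-i+1)(q-1)-p^t}x_2^{(k'+i-1)(q-1)+p^t}$; Lucas gives $\binom{(q-i+1)(q-1)}{p^t}\equiv r(q-1)\not\equiv 0$, a contradiction, so $c_1\neq0$. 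Finally, normalizing $c_1=1$, the same monomials determine each later $c_i$ in turn from the earlier ones. This is exactly your triangularization, but executed directly rather than wrapped in a matrix-rank statement; it avoids the bookkeeping you were worried about. (Incidentally, your coefficient count $2(q+1)-k+1$ is off for $k=q$, where the range is $[0,q]$, not $[-1,q+1]$; and the identification with the theorem's basis should be $k_{\mathrm{thm}}=k-q$, not $k-q-1$.)

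Second, existence. Your sketch (``the substitution permutes the relevant linear factors modulo $\mathfrak m^{[q^2]}$'') is not a proof here: the polynomial $y_{k'}$ is not a product of linear forms, and nothing is being permuted. The paper does not attempt a direct verification in this lemma; it defers the invariance of $y_{k'}$ to the later Proposition~\ref{prop:k=1}, where the key identity
\[
\sum_{j\ge 0}(1+x)^{j(q-1)}=\frac{x^{q^m-1}-1}{x^{q-1}-1}=1+x^{q-1}+\cdots+x^{q^m-q}
\]
in $\mathbb F_q[[x]]$ (truncated appropriately) computes all the binomial sums at once. If you want a self-contained argument for existence at this point, that generating-function trick is the clean way in; the ``Dickson-type'' heuristic you sketched does not close the gap.
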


\begin{proof}
Fix $k$ and let $k'=k-q$. Then, a general expression of an invariant polynomial $y$ of degree $k$ is of the form:
$$y=c_0x_1^{q^2-1}x_2^{(k'-1)(q-1)} + c_1x_1^{q^2-q}x_2^{k'(q-1)} + \cdots + c_{q-k'+2}x_1^{(k'-1)(q-1)}x_2^{q^2-1}.$$ 

I claim that $c_0=0$ and $c_1 \neq 0$. If $c_0 \neq 0$, then on the substitution $x_1 \rightarrow x_1+x_2$, the coefficient of $x_1^{q^2-2}x_2^{(k'-1)(q-1)+1}$ becomes $c_0(q^2-1)\neq 0$, which isn't allowed. Thus, $c_0=0$. Next suppose $c_1=0$. Choose the smallest $i$ such that $c_i \neq 0$. In the above polynomial, $c_i$ is the coefficient of $x_1^{(q-i+1)(q-1)}x_2^{(k'+i-1)(q-1)}$. Let $q-i+1=p^tr$ where $p \nmid r$. As $i \neq 1$, $p^t < q$. Then, the same substitution gives a non-zero coefficient to $x_1^{(q-i+1)(q-1) - p^t}x_2^{(k'+i-1)(q-1) + p^t}$ which is again impossible. Hence, $c_1 \neq 0$. Without loss of generality, $c_1 = 1$. I claim that
$$y=y_{k'} := x_1^{q^2-q}x_2^{k'(q-1)}+ x_1^{(q-1)^2}x_2^{(k'+1)(q-1)}+\cdots+x_1^{k'(q-1)}x_2^{q^2-q}.$$

That $y_{k'}$ is invariant, will be proven in a more general setting in Proposition~\ref{prop:k=1}. So, we only need to check that this choice of $y$ is forced, and thus, unique. But, this is easy to see. Having fixed $c_1=1$, after the substitution $x_1 \rightarrow x_1+x_2$, the coefficient of $x_1^{(q-i+1)(q-1) - p^t}x_2^{(k'+i-1)(q-1) + p^t}$ should be equal to $0$ (except in the case when $q=2$ and $k=1$ when this coefficient should be $1$), and using this, the values of the other $c_i$'s gets uniquely determined.
\end{proof}

We illustrate the last step of the proof with an example. 

\begin{example}
Consider the case $q=4$ and $k'=0$. Then, a general expression for $y$ is of the form $c_1x_1^{12} + c_2x_1^9x_2^3 + c_3x_1^6x_2^6 + c_4x_1^3x_2^9 + c_5x_2^{12}$. As above, we'll assume $c_1=1$ and perform the substitution $x_1 \rightarrow x_1 + x_2$. Then, the coefficient of $x_1^8x_2^4$ becomes $c_1{12 \choose 4} + c_2{9 \choose 1}$ which should be $0$. This determines $c_2$ as ${9 \choose 1} \neq 0$. Next, we try to find $c_3$. As $2|6$ and $4 \nmid 6$, we consider the coefficient of $x_1^{6-2}x_2^{6+2} = x_1^8x_2^4$. This is equal to $c_1{12 \choose 8} + c_2{9 \choose 5} + c_3{6 \choose 2}$ which must be $0$. Again, as ${6 \choose 2} \neq 0$, $c_3$ gets uniquely determined. Proceeding similarly, considering the coefficient of $x_1^2x_2^{10}$, we get the value of $c_4$, and then, $c_5 = c_1$ by symmetry. Hence, $c_1$ determines all the other coefficients uniquely.
\end{example}

\begin{remark}
Up till now, we've only described the invariant ring $Q^G$ in terms of a linear basis as an $\mathbb{F}_q$-vector space. Some multiplicative properties can be easily observed. Recall that $y_{k'} = x_1^{q^2-q}x_2^{k'(q-1)}+ x_1^{(q-1)^2}x_2^{(k'+1)(q-1)}+\cdots+x_1^{k'(q-1)}x_2^{q^2-q}$ for $0\leq k' \leq q$. It is a matter of simple computations to check that $y_0^2 = y_q$, $y_0y_2=-x_1^{q^2-1}x_2^{q^2-1}$ and $y_2^2=0$. All the products $y_iy_j$, apart from these 3, are zero because of degree arguments. For example, $y_0y_1$ has degree $2q^2-q-1$, but we know that there aren't any invariant polynomials of this degree, and so, $y_0y_1=0$.
\end{remark}

With this, we have now proven Theorem~\ref{theo:m=2,n=2} and have a complete description of the structure of the invariant ring.

\subsection{$m=2$, arbitrary $n$} \label{subsec:n=arbit}

Now, we work towards solving the problem for a general $n$ when $m=2$. We have that $Q=\mathbb{F}_q[x_1,x_2, \cdots, x_n]/\mathfrak{m}^{[q^2]}$ where $\mathfrak{m}^{[q^2]}=(x_1^{q^2},x_2^{q^2},\cdots,x_n^{q^2})$. The aim of this section will be to generalise the ideas from the case $n=2$. We define the polynomials:
$$z_n := \prod\limits_{i=1}^n x_i^{q^2-1}$$
and
$$y_{n,k} : = \sum\limits_{\substack{i_1+i_2+\cdots+i_n=(n-1)q+k \\ 0\leq i_1,i_2,\cdots,i_n \leq q }}\bigg(\prod\limits_{j=1}^n x_j^{i_j(q-1)}\bigg)$$
for $0 \leq k \leq q$. In fact, $y_{n,k}$ can be defined for $k>q$ using the same definition. But in those cases, the bounds on the $i_j$'s would imply that the sum is an empty sum, and so, $y_{n,k}=0$ for $k>q$. 

\begin{lemma} \label{lemma:invar}
$z_n$ and $y_{n,k}$ remain invariant under the action of $G$.
\end{lemma}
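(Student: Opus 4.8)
The plan is to invoke Lemma~\ref{lemma:intro1}: it suffices to check invariance under diagonal matrices, under permutations of the variables, and under the single substitution $x_1 \to x_1 + x_2$. Diagonal invariance is immediate since every monomial appearing in $z_n$ and in $y_{n,k}$ has all exponents divisible by $q-1$, hence each variable contributes a factor $(\lambda_j^{q-1})^{\bullet} = 1$ for $\lambda_j \in \mathbb{F}_q^\times$. Permutation invariance is also immediate: $z_n$ is symmetric, and the defining sum for $y_{n,k}$ ranges over all tuples $(i_1,\dots,i_n)$ with fixed sum and the symmetric box constraint $0 \le i_j \le q$, so permuting variables just permutes the summands. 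For $z_n$, the substitution $x_1 \to x_1 + x_2$ sends $x_1^{q^2-1}x_2^{q^2-1}$ to $(x_1+x_2)^{q^2-1}x_2^{q^2-1}$; expanding, every term except the leading one carries a factor $x_2^{q^2-1+j}$ with $j \ge 1$, which vanishes in $Q$ because $x_2^{q^2} = 0$. So $z_n$ is invariant.

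The real content is the invariance of $y_{n,k}$ under $x_1 \to x_1 + x_2$. Here I would group the terms of $y_{n,k}$ according to the exponents $i_3,\dots,i_n$ of the "spectator" variables $x_3,\dots,x_n$: for each fixed choice of those, the relevant partial sum is $x_3^{i_3(q-1)}\cdots x_n^{i_n(q-1)}$ times a two-variable expression of the form $\sum_{a+b = s,\ 0\le a,b\le q} x_1^{a(q-1)}x_2^{b(q-1)}$ for the appropriate value of $s$. This is exactly (up to reindexing $a \leftrightarrow q-a$, i.e. multiplying and dividing by suitable powers) the polynomial $y_{2,s}$ — equivalently the two-variable invariant $y_{k'}$ from Section~\ref{subsec:n=2} — times a monomial $(x_1 x_2)^{\min(\dots)(q-1)}$ that absorbs the lower bound. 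Since the substitution $x_1 \to x_1 + x_2$ acts only on $x_1, x_2$ and fixes $x_3,\dots,x_n$, it suffices to show each such two-variable block is invariant in $\mathbb{F}_q[x_1,x_2]/(x_1^{q^2}, x_2^{q^2})$. Thus the lemma reduces to the $n=2$ statement, which is precisely the claim "$y_{k'}$ is invariant" deferred to Proposition~\ref{prop:k=1}.

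I would therefore prove the two-variable fact directly here (or simply cite Proposition~\ref{prop:k=1}). The clean way: write $y_{2,s}$ in closed form as $x_1^{?}x_2^{?}\,\dfrac{x_1^{(\ell+1)(q-1)} - x_2^{(\ell+1)(q-1)}}{x_1^{q-1} - x_2^{q-1}}$ for the appropriate $\ell$, set $u = x_1^{q-1}$, $v = x_2^{q-1}$ and observe this is a monomial times the complete homogeneous symmetric polynomial $h_\ell(u,v) = u^\ell + u^{\ell-1}v + \cdots + v^\ell$. Under $x_1 \to x_1+x_2$ we do not have $u \mapsto u + v$ (the substitution is not linear in $u,v$), so the argument must be made at the level of $x_1, x_2$: expand $(x_1+x_2)^{a(q-1)}$ via the binomial theorem, use $x_2^{q^2} = 0$ to kill high powers of $x_2$, and check that the surviving cross-terms telescope across consecutive summands of $y_{2,s}$. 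The main obstacle is exactly this telescoping/cancellation bookkeeping — controlling which binomial coefficients $\binom{a(q-1)}{j}$ are nonzero mod $p$ (via Lucas) and matching them against the contributions coming from the neighboring term with exponent pair shifted by one — and it is essentially the same computation that underlies the uniqueness argument in Section~\ref{subsec:n=2}. Once that two-variable cancellation is established, the reduction in the previous paragraph finishes the general $n$ case with no further work.
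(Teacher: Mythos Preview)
Your proposal is correct and follows essentially the same strategy as the paper: reduce to the $n=2$ case and invoke Proposition~\ref{prop:k=1}. The paper packages the reduction as an induction on $n$, writing $y_{n,k} = \sum_{i=k}^{q} y_{n-1,\,q+k-i}\,x_n^{i(q-1)}$ and peeling off $x_n$ one step at a time; you instead freeze all of $i_3,\dots,i_n$ at once and land directly on a two-variable block. These are the same idea---since the substitution $x_1\to x_1+x_2$ touches only $x_1,x_2$, either decomposition works---though the paper's recursive identity is reused later in the uniqueness argument (Step~2 of the main theorem in Section~\ref{subsec:n=arbit}), so it earns its keep there.

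One small wobble worth cleaning up: your parenthetical about an extra factor $(x_1x_2)^{\min(\cdots)(q-1)}$ ``absorbing the lower bound'' is unnecessary and potentially misleading. Since $s = (n-1)q+k - (i_3+\cdots+i_n) \ge q+k \ge q$, the box constraint $0\le a,b\le q$ together with $a+b=s$ already forces $a,b\ge s-q$, so the two-variable block is \emph{exactly} $y_{2,\,s-q}=y_{k'}$ with $k'=s-q\in\{k,\dots,q\}$ and no leftover monomial. Had such a nontrivial factor $(x_1x_2)^{c(q-1)}$ actually been present, it would not itself be invariant under $x_1\to x_1+x_2$, and your reduction would not go through; so it is worth stating the identification cleanly.
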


\begin{proof}
The proof is by induction. The case when $n=1$ is easily verified and the case $n=2$ has already been solved and acts as base case of the induction.

$z_n$ can be trivially seen to be invariant under the action of $Gl_n(\mathbb{F}_q)$ on $Q$. Now, fixing $k$, we consider $y_{n,k}$. As this expression is symmetric in the $x_i$'s, it remains invariant under the action of the permutation matrices. Also, as all the exponents are divisible by $q-1$, diagonal matrices too act trivially on $y_{n,k}$. Therefore, by Lemma~\ref{lemma:intro1}, we only need to check the action of the substitution $x_1 \rightarrow x_1+x_2$ on $y_{n,k}$. Here, we make the observation that $y_{n,k}$ can be written as:
\begin{align*}
y_{n,k} &= \sum\limits_{\substack{i_1+i_2+\cdots+i_n=(n-1)q+k \\ 0\leq i_1,i_2,\cdots,i_n \leq q }}\bigg(\prod\limits_{j=1}^n x_j^{i_j(q-1)}\bigg)
\\&= \sum\limits_{i_n=0}^{q}\Bigg(\sum\limits_{\substack{i_1+i_2+\cdots+i_{n-1}=(n-1)q+k-i_n \\ 0\leq i_1,i_2,\cdots,i_{n-1} \leq q }}\bigg(\prod\limits_{j=1}^{n-1} x_j^{i_j(q-1)}\bigg)\Bigg)x_n^{i_n(q-1)}
\\&= \sum\limits_{i_n=0}^{q} y_{n-1,q+k-i_n}x_n^{i_n(q-1)}
\\&= \sum\limits_{i=k}^{q} y_{n-1,q+k-i}x_n^{i(q-1)}
\end{align*}
where, in the last step, we have discarded some of the initial terms of the sum, because $y_{n-1,q+k-i_n}$ is non-zero only when $ q+k-i_n \leq q$. When we apply the transformation $x_1 \rightarrow x_1 + x_2$ to the final expression obtained above, $y_{n-1,q+k-i}$ remains invariant by the induction hypothesis and $x_n$ remains invariant as $n>2$. Hence, $y_{n,k}$ remains invariant under the action of $G$. 
\end{proof}

Now, we state the main result of this section:

\begin{theorem}
The Hilbert series for $Q^G$ is given as:
$$Hilb(Q^G,t)=1+t^{(n-1)(q^2-q)}(1+t^{q-1}+t^{2(q-1)}+\cdots+t^{q^2-q}) + t^{n(q^2-1)}$$
when $n \geq 2$. For $n=1$, the Hilbert series is $\frac{1-t^{q^2+q-2}}{1-t^{q-1}}$.
A basis for the invariant ring $Q^G$ as an $\mathbb{F}_q$-vector space is given as $1$, $y_{n,k}$ and $z_n$ for $0 \leq k \leq q$.
\end{theorem}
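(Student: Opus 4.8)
The plan is to prove the matching upper bound, since the lower bound is already in hand: by Lemma~\ref{lemma:invar} the elements $1$, the $y_{n,k}$ for $0\le k\le q$, and $z_n$ lie in $Q^G$, and their degrees $0$, $\big[(n-1)q+k\big](q-1)$ and $n(q^2-1)$ are pairwise distinct, so they are linearly independent and the claimed series is a lower bound for $\mathrm{Hilb}(Q^G,t)$. Thus it suffices to show that, for each $d$, the space of homogeneous invariants of degree $d$ has dimension at most the coefficient of $t^d$ in the claimed formula. By Lemma~\ref{lemma:intro} any invariant is supported on monomials $\prod_j x_j^{i_j(q-1)}$ with $0\le i_j\le q+1$, so a homogeneous invariant has degree $N(q-1)$ with $0\le N\le n(q+1)$, and I must show this space is zero unless $N=0$, $(n-1)q\le N\le nq$, or $N=n(q+1)$, and is at most one-dimensional in those cases.

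I would induct on $n$, the cases $n=1$ (a scaling action) and $n=2$ (Theorem~\ref{theo:m=2,n=2}) being the base. Fix $n\ge 3$ and a homogeneous invariant $y$ of degree $N(q-1)$, and write $y=\sum_{a=0}^{q+1} f_a\,x_n^{a(q-1)}$ with $f_a\in\mathbb{F}_q[x_1,\dots,x_{n-1}]/(x_1^{q^2},\dots,x_{n-1}^{q^2})$ homogeneous of degree $(N-a)(q-1)$. Restricting the action to the block subgroup $Gl_{n-1}(\mathbb{F}_q)$, which fixes $x_n$, and using that the monomials with exponents $<q^2$ form a basis of $Q$, each $f_a$ is $Gl_{n-1}(\mathbb{F}_q)$-invariant; by the inductive hypothesis together with homogeneity, each nonzero $f_a$ is a scalar multiple of the unique element $B_a$ among $1,\ y_{n-1,0},\dots,y_{n-1,q},\ z_{n-1}$ whose degree equals $(N-a)(q-1)$ (and $f_a=0$ if there is none). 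Comparing $(N-a)(q-1)$ with the admissible degrees $0$, $\big[(n-2)q+j\big](q-1)$ (for $0\le j\le q$) and $(n-1)(q^2-1)$ of invariants in $n-1$ variables then pins down, for each value of $N$, a short explicit list of candidate terms $c_a\,B_a\,x_n^{a(q-1)}$.

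To eliminate all but the predicted possibilities I would use that, for $n\ge 3$, $Gl_n(\mathbb{F}_q)$ is generated by the block $Gl_{n-1}(\mathbb{F}_q)$ and the transposition $x_{n-1}\leftrightarrow x_n$, so $y$ must moreover be fixed by that transposition. Substituting the candidate expansion, re-expanding each $z_{n-1}$ and each $y_{n-1,j}$ by the recursion $y_{n-1,j}=\sum_{i=j}^{q} y_{n-2,\,q+j-i}\,x_{n-1}^{i(q-1)}$ obtained in the proof of Lemma~\ref{lemma:invar}, and matching coefficients, one should find: if $N<(n-1)q$ or $nq<N<n(q+1)$ then every $c_a$ is forced to vanish, so there are no invariants in those degrees; if $(n-1)q\le N\le nq$ then all the $c_a$ are determined by a single one of them, and the resulting $y$ is a scalar multiple of $y_{n,\,N-(n-1)q}$, recognized via the identity $y_{n,k}=\sum_{i=k}^{q} y_{n-1,\,q+k-i}\,x_n^{i(q-1)}$; and if $N=n(q+1)$ then only $f_{q+1}=c\,z_{n-1}$ survives, giving $y=c\,z_n$. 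This yields the stated Hilbert series and basis.

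The main obstacle will be precisely that last step: carrying out the coefficient comparison uniformly in $n$ and $N$ so as to confirm both the vanishing in the two ``gap'' ranges of $N$ and the one-dimensionality (with the surviving tuple being exactly the one realized by $y_{n,k}$) in the allowed range. This is the recursive analogue of the Lucas'-theorem coefficient-chasing carried out for $n=2$, but the appearance of a $z_{n-1}$-component in some $f_a$ for certain $N$, and the need to rewrite $(n-1)$-variable invariants in terms of $(n-2)$-variable ones when applying the transposition, are what make the bookkeeping delicate.
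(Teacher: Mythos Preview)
Your plan is essentially the paper's own argument: induct on $n$, expand $y$ in powers of $x_n$, use the block $Gl_{n-1}$ to force each coefficient $f_a$ to be a scalar multiple of a known $(n-1)$-variable invariant, and then use a transposition involving $x_n$ to determine the scalars and rule out the forbidden degrees.

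Where the paper differs from your sketch is only in the execution of the step you flag as the obstacle, and its choices make that step much lighter. First, for the degree restrictions the paper does not do coefficient-matching at all: it simply observes that if no $f_a$ is a multiple of $z_{n-1}$ then every $x_j$ with $j\le n-1$ occurs in $y$ with exponent at most $q^2-q$, so by full $S_n$-symmetry the top power of $x_n$ is also $q^2-q$; this forces the index $a=q$ term to be present and nonscalar, and a one-line degree count then excludes both gap ranges. Second, for uniqueness in the allowed range the paper uses the transposition $x_1\leftrightarrow x_n$ rather than $x_{n-1}\leftrightarrow x_n$: each $f_a$ (being some $c_a\,y_{n-1,\,q+k-a}$) contains a monomial divisible by $x_1^{q^2-q}$, and swapping sends that monomial into $f_q\,x_n^{q^2-q}=y_{n-1,k}\,x_n^{q^2-q}$, where all coefficients are $1$; hence every $c_a=1$. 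This avoids entirely the re-expansion through $(n-2)$-variable invariants that you anticipated, and there is no delicate bookkeeping with a stray $z_{n-1}$-component.
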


As the case when $n=1$ is trivially true, we'll assume $n\geq2$ here onwards. By Lemma~\ref{lemma:invar} we only need to check that these are the only invariants in $Q$ up to scalar multiples. The proof will be done in $2$ steps. Firstly, we'll try to find the possible degrees for an invariant polynomial. Next, for those degrees, we'll prove that there exist unique invariant polynomials up to scalar multiples. As we have already proven the existence of at least one such polynomial for each degree, we'll be done. 

\textbf{Step 1:} Let $y \in Q^G$ be a homogeneous polynomial of degree $r$. Express $y$ as:
\begin{equation} 
y = c_0 + c_1x_n^{q-1} + c_2x_n^{2(q-1)} + \cdots + c_kx_n^{k(q-1)} \label{eq:1}
\end{equation}
for some $k$ where $c_i \in \mathbb{F}_q[x_1,x_2,\cdots,x_{n-1}]/(x_1^{q^2},x_2^{q^2},\cdots,x_{n-1}^{q^2})$ for all $i$ and $c_k \neq 0$. If $y$ is to be invariant under the action of $G$, each of the $c_i$'s must also be invariants. By the induction hypothesis, any such non-scalar $c_i$ must have degree greater than or equal to $(n-2)(q^2-q)$. Therefore, degree of $y$ is greater than or equal to $(n-2)(q^2-q)$. We claim that $r=n(q^2-1)$ or $(n-1)(q^2-q) \leq r \leq n(q^2-q)$. The following lemmas prove this by showing that $r$ can not take any other values.

\begin{lemma}
For an invariant polynomial $y$, the degree $r$ can not be less that $(n-1)(q^2-q).$
\end{lemma}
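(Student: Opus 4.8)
The plan is to exploit the decomposition in \eqref{eq:1} together with the inductive structure already set up. Write $y = \sum_{i=0}^k c_i x_n^{i(q-1)}$ with each $c_i \in \mathbb{F}_q[x_1,\dots,x_{n-1}]/(x_1^{q^2},\dots,x_{n-1}^{q^2})$ invariant under $Gl_{n-1}(\mathbb{F}_q)$, and $c_k \neq 0$. By the induction hypothesis on $n$, the only invariants in $n-1$ variables are (scalars, $y_{n-1,j}$ for $0 \le j \le q$, and $z_{n-1}$), so each nonzero non-scalar $c_i$ has degree in the set $\{(n-2)(q^2-q)+j(q-1) : 0 \le j \le q\} \cup \{(n-1)(q^2-1)\}$. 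Assume for contradiction that $r = \deg y < (n-1)(q^2-q)$.

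First I would rule out the case $k = 0$: then $y = c_0$ lives in $n-1$ variables, and the induction hypothesis forces $\deg c_0 \in \{0\} \cup \{(n-2)(q^2-q) + j(q-1)\} \cup \{(n-1)(q^2-1)\}$; all the nonzero options except the first already exceed or equal $(n-1)(q^2-q)$ once one checks $(n-2)(q^2-q) + q(q-1) = (n-1)(q^2-q) - (n-2)(q^2-q) \cdots$ — more simply, the largest such non-top value is $(n-2)(q^2-q)+q(q-1) = (n-1)q^2 - (2n-3)q$, which is $< (n-1)(q^2-q)$ only when... so this sub-case needs the genuine argument below too. So assume $k \ge 1$. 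The key move is the substitution $x_n \to x_n + x_{n-1}$ (legitimate by symmetry, replacing the role of $x_1 \to x_1+x_2$ in Lemma~\ref{lemma:intro1}). Applying it to the top term $c_k x_n^{k(q-1)}$ and extracting, from the resulting expansion $c_k (x_n + x_{n-1})^{k(q-1)}$, the coefficient of the monomial $x_n^{k(q-1)-p^t} x_{n-1}^{p^t}$ where $k = p^t r'$ with $p \nmid r'$: by Lucas' theorem $\binom{k(q-1)}{p^t} \equiv r'(q-1) \not\equiv 0 \pmod p$, exactly as in the earlier lemmas. This contributes a nonzero term $c_k \cdot (\text{unit}) \cdot x_{n-1}^{p^t}$ to the coefficient of $x_n^{k(q-1)-p^t}$ in the transformed $y$, which must be cancelled by the contribution coming from $c_{k'} x_n^{k'(q-1)}$ for the appropriate larger index — but since $c_k$ is the top coefficient there is no such term, forcing $c_k x_{n-1}^{p^t} = 0$ in $Q$, hence (as $p^t < q^2$) $c_k$ itself must be divisible by $x_{n-1}^{q^2 - p^t}$; since $c_k$ is symmetric under $Gl_{n-1}$, it is divisible by $x_j^{q^2-p^t}$ for every $j \le n-1$, so $\deg c_k \ge (n-1)(q^2 - p^t) \ge (n-1)(q^2-q)$, already contradicting $r \ge \deg c_k$ unless... hmm, actually $\deg y \ge \deg c_k + k(q-1) \ge (n-1)(q^2-q) + (q-1)$, a clean contradiction.

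Wait — the cancellation argument needs care: the coefficient of $x_n^{k(q-1)-p^t}$ in the transformed polynomial gets contributions only from $c_i x_n^{i(q-1)}$ with $i(q-1) \ge k(q-1) - p^t$, i.e. $i = k$ (since $p^t \le k \le$ small and $(q-1) > p^t$ would be needed — one must check $p^t < q-1$ or handle $p^t \ge q-1$ separately, just as the prior lemmas split off the $q=2$ anomaly). In the generic range the only contributor is $i=k$, giving the contradiction above. The sub-case $k=0$ is then absorbed by noting that if $y = c_0$ in $n-1$ variables has degree $< (n-1)(q^2-q)$, the induction hypothesis (for $n-1$ variables, whose non-scalar invariants all have degree $\ge (n-2)(q^2-q)$, with the $y_{n-1,j}$ topping out at $(n-2)(q^2-q)+q(q-1) < (n-1)(q^2-q)$) forces $c_0$ to be a scalar, so $r=0$, contradicting homogeneity of positive degree.

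\textbf{Main obstacle.} The delicate point is the interaction between the exponent $p^t$ coming from Lucas' theorem and the gap $q-1$ between consecutive powers of $x_n$ in \eqref{eq:1}: one must verify that the perturbed monomial $x_n^{k(q-1)-p^t}x_{n-1}^{p^t}$ really does receive a contribution from \emph{only} the top coefficient $c_k$, and treat the boundary cases (notably $q=2$, and small $k$) where this fails, exactly in the spirit of the $n=2$ argument. Once that bookkeeping is pinned down, the divisibility-plus-symmetry conclusion on $c_k$ gives the degree bound immediately.
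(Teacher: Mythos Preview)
Your approach diverges from the paper's, and the ``boundary case'' you flag is not bookkeeping but the crux of the lemma.

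The paper argues by symmetry, not by a transvection-plus-Lucas computation. Since $y$ is $S_n$-symmetric, the top power of $x_n$ in $y$ must equal the top power of any $x_j$ ($j\le n-1$) occurring among the $c_i$'s. No $c_i$ can be a multiple of $z_{n-1}$ (symmetry would then demand a factor $x_n^{q^2-1}$ as well, overshooting $r$), and not all $c_i$ can be scalar (else $y\in\mathbb{F}_q[x_n]$, and symmetry forces $y$ constant). Hence the nonzero non-scalar $c_i$'s are multiples of the $y_{n-1,j}$, whose maximal variable-exponent is $q^2-q$; this forces $k=q$. Then $c_q$ itself cannot be scalar (that would give $r=q^2-q$, too small), so $\deg c_q\ge(n-2)(q^2-q)$ and $r=\deg c_q+q(q-1)\ge(n-1)(q^2-q)$.

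Your Lucas argument does dispose of every $k\in\{1,\dots,q-1,q+1\}$: there the $p$-part $p^t$ of $k$ satisfies $p^t\le q/p<q-1$ (for $q>2$), so only $i=k$ contributes to the coefficient of $x_n^{k(q-1)-p^t}$, and $c_kx_{n-1}^{p^t}=0$ indeed forces $c_k\propto z_{n-1}$. But for $k=q$ one has $p^t=q>q-1$, and then \emph{both} $i=q$ and $i=q-1$ contribute; the relation one extracts is $c_{q-1}x_{n-1}=c_qx_{n-1}^q$, which does not by itself bound any degree. This is not analogous to the $n=2$ lemma you invoke, where the coefficients were scalars and no cross-cancellation between distinct $c_i$ was possible. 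Since the paper's symmetry step shows that $k=q$ is precisely the case that must occur, your proposal omits the essential case rather than a fringe one. Your $k=0$ handling also has an error: $(n-2)(q^2-q)+q(q-1)$ equals $(n-1)(q^2-q)$, not strictly less, and in any event the $y_{n-1,j}$ with $j<q$ have degree below $(n-1)(q^2-q)$, so the induction hypothesis alone cannot force $c_0$ to be scalar. What forces it is, once more, $S_n$-symmetry: if $y=c_0$ omits $x_n$, swapping $x_n\leftrightarrow x_j$ shows it omits every $x_j$, so $y$ is constant. Both gaps close immediately once you use permutation symmetry instead of a single transvection.
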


\begin{proof}
As we have already shown that $r \geq (n-2)(q^2-q)$, suppose $(n-2)(q^2-q) \leq r < (n-1)(q^2-q)$. We can assume that none of the $c_i$'s is a scalar multiple of $z_{n-1}$, because due to symmetry, we would need to multiply it by $x_n^{q^2-1}$ which would exceed the assumed bound on degrees. Also, we must have that at least one of the $c_i$'s is non-scalar, otherwise, $y$ won't be symmetric. In any such $c_i$, the maximum degree of any $x_j$ for $j \leq n-1$ is $q^2-q$. So, the maximum degree of $x_n$ in $y$, which is $k(q-1)$, should also be equal to $q^2-q$, and thus, $k=q$. 

Next, $c_k=c_q$ can not be scalar, because that would imply that the degree of $y$ is equal to $q^2 -q < (n-2)(q^2-q)$ except when $n=3$ (in which case, we would need to have an invariant of degree $q^2-q$ which is not possible), and thus, give a contradiction. So, $c_q$ is not scalar, and so, the degree of $c_qx_n^{q^2-q}$ is greater than or equal to $(n-2)(q^2-q) + (q^2-q) = (n-1)(q^2-q)$ which exceeds the bound on the degree of $y$, and so, such a $y$ can't exist. 
\end{proof}

\begin{lemma}
For an invariant polynomial $y$, if $r$ is the degree, we can't have $n(q^2-q) < r < n(q^2-1)$.
\end{lemma}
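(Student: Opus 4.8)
The plan is to follow the same strategy as in the previous lemma, working with the expansion \eqref{eq:1} of $y$ in powers of $x_n^{q-1}$ and exploiting the induction hypothesis on the coefficients $c_i \in \mathbb{F}_q[x_1,\dots,x_{n-1}]/(x_1^{q^2},\dots,x_{n-1}^{q^2})$. Since each $c_i$ must itself be $Gl_{n-1}(\mathbb{F}_q)$-invariant, by the $(n-1)$-variable version of the theorem each nonzero $c_i$ is (up to scalar) either a scalar, some $y_{n-1,j}$ (of degree $(n-2)(q^2-q)+j(q-1)$ for $0\le j\le q$), or $z_{n-1}$ (of degree $(n-1)(q^2-1)$). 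The top coefficient $c_k$ is nonzero, and $\deg y = \deg c_k + k(q-1)$. So the first step is to record the finite list of possible degrees of a nonzero invariant $c_k$ together with the constraint $k(q-1)\le q^2-1$, i.e. $k \le q+1$ (the maximal power of $x_n$ allowed in $Q$), and then simply enumerate which total degrees $r = \deg c_k + k(q-1)$ can fall strictly between $n(q^2-q)$ and $n(q^2-1)$.

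Concretely: if $c_k$ is scalar, $r = k(q-1) \le (q+1)(q-1) = q^2-1 \le n(q^2-q)$ for $n\ge 2$, so this gives nothing in the forbidden range. If $c_k = y_{n-1,j}$, then $r = (n-2)(q^2-q) + j(q-1) + k(q-1)$ with $0\le j\le q$ and $k\le q+1$; the largest such $r$ is $(n-2)(q^2-q) + (2q+1)(q-1) = (n-2)(q^2-q) + 2(q^2-1) - (q-1)$, and one checks this is at most $n(q^2-q)$ (equivalently $2(q^2-1)-(q-1) \le 2(q^2-q)$, i.e. $q-1 \ge 0$), so again nothing lands in the open interval. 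The only remaining case is $c_k = z_{n-1}$, giving $r = (n-1)(q^2-1) + k(q-1)$; since $k\le q+1$ this ranges over $(n-1)(q^2-1)$ up to $(n-1)(q^2-1) + (q^2-1) = n(q^2-1)$. So the only way to get $r$ in the half-open range $n(q^2-q) < r < n(q^2-1)$ is to have $c_k$ a scalar multiple of $z_{n-1}$ with $1 \le k \le q$, forcing (by the degree computation, since $(n-1)(q^2-1) > n(q^2-q)$ for $n\ge 2$ and $q\ge 2$) $r = (n-1)(q^2-1) + k(q-1)$.

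It then remains to rule out this last configuration. Here I would argue as in the earlier lemmas using the substitution $x_1 \to x_1 + x_2$ (via Lemma~\ref{lemma:intro1}): write $y = z_{n-1}\,x_n^{k(q-1)} + (\text{lower terms in } x_n)$ — but by symmetry of $Gl_n(\mathbb{F}_q)$ the monomial $x_1^{q^2-1}\cdots x_{n-1}^{q^2-1} x_n^{k(q-1)}$ appearing in $y$ forces, by permuting the roles of the variables, that the ``missing'' variable could be any $x_\ell$; so $y$ must contain $\big(\prod_{\ell\ne s} x_\ell^{q^2-1}\big) x_s^{k(q-1)}$ for every $s$. Applying $x_1\to x_1+x_2$ to the term $\big(\prod_{\ell\ne 2} x_\ell^{q^2-1}\big)x_2^{k(q-1)} = x_1^{q^2-1}x_3^{q^2-1}\cdots x_n^{q^2-1} x_2^{k(q-1)}$ produces a monomial $x_1^{q^2-2}x_2^{k(q-1)+1}x_3^{q^2-1}\cdots x_n^{q^2-1}$ with coefficient $q^2-1 \ne 0$ (here one must check, using that $k\le q$ so $k(q-1)+1 \le q^2-q+1 \le q^2-1$, that the exponent on $x_2$ stays below $q^2$, so the monomial does not vanish in $Q$); no other surviving term of $y$ can contribute to this monomial because all other terms have $x_1$-degree equal to $q^2-1$ or an exponent shape incompatible with $x_3,\dots,x_n$ all at degree $q^2-1$. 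Hence the coefficient of that monomial in $y$ is nonzero, contradicting invariance, so no such $y$ exists.

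The main obstacle I expect is the last step: cleanly justifying that the $x_1\to x_1+x_2$ substitution produces a monomial with nonzero coefficient that is not cancelled by any other term of $y$. This requires pinning down precisely which monomials of $y$ (beyond the symmetry-forced $z_{n-1}x_n^{k(q-1)}$-type terms) could interfere, and verifying the exponent bounds so the offending monomial is genuinely nonzero in the quotient $Q$ rather than being killed by $\mathfrak{m}^{[q^2]}$. The enumeration in the first two paragraphs is routine bookkeeping with the induction hypothesis; the care is all in the non-cancellation argument, exactly paralleling the $c_0=0$ arguments in the $n=2$ case and in the preceding lemma.
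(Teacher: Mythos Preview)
Your enumeration contains a genuine arithmetic error that leaves a case unhandled. You claim that when $c_k = y_{n-1,j}$ the maximal degree $(n-2)(q^2-q) + (2q+1)(q-1) = (n-2)(q^2-q) + 2(q^2-1) - (q-1)$ is at most $n(q^2-q)$, reducing this to ``$2(q^2-1)-(q-1) \le 2(q^2-q)$, i.e.\ $q-1 \ge 0$''. But $2(q^2-1)-(q-1) = 2q^2-q-1$ and $2(q^2-q) = 2q^2-2q$, so the inequality reads $q \le -1$, which is false. Concretely, $j=q$ and $k=q+1$ gives $r = n(q^2-q) + (q-1)$, which \emph{does} lie in the forbidden interval. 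So the case $c_{q+1} = y_{n-1,q}$ is not covered by your degree bookkeeping. (Your parenthetical $(n-1)(q^2-1) > n(q^2-q)$ is also false for small $n,q$, e.g.\ $n=q=2$, though this one is harmless for the logic.)

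The paper closes both this gap and your final case with a single symmetry observation rather than substitution: if any $c_i$ were a multiple of $z_{n-1}$, permuting $x_n$ with some $x_j$ would force $x_n$ to appear with exponent $q^2-1$, hence $c_{q+1}$ would itself have to be a multiple of $z_{n-1}$, giving $r = n(q^2-1)$, a contradiction. Once no $c_i$ equals $z_{n-1}$, every variable $x_j$ with $j<n$ has maximal exponent $q^2-q$ in $y$; by symmetry the same holds for $x_n$, so $k=q$, and then $r \le (n-1)(q^2-q) + (q^2-q) = n(q^2-q)$. In fact your own symmetry step in the last paragraph already yields the contradiction without any substitution: you deduce that $y$ contains $\big(\prod_{\ell\ne s} x_\ell^{q^2-1}\big) x_s^{k(q-1)}$ for every $s$, but for $s\ne n$ this monomial has $x_n^{q^2-1}$, forcing $c_{q+1}\ne 0$ and contradicting that $c_k$ with $k\le q$ was the top coefficient. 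The substitution argument is unnecessary, and the missing case $c_{q+1}=y_{n-1,q}$ is disposed of by the same symmetry (it would force some $c_i$ to contain a monomial with an exponent $q^2-1$, hence to equal $z_{n-1}$).
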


\begin{proof}
Let us assume the contrary. Again writing $y$ in the form as in Equation~\eqref{eq:1}, we get the following similar observations: $k=q$, at least one of the $c_i$'s must be non-scalar and none of the $c_i$'s is a scalar multiple of $z_{n-1}$. This implies that deg($y$) $=$ $r=$ degree of $c_qx_n^{q(q-1)} \leq (n-1)(q^2-q) + (q^2-q) = n(q^2-q)$ which is again a contradiction. Hence, an invariant polynomial can't have its degree in the given range.
\end{proof}

\textbf{Step 2:} Step 1 makes it clear that the degree of $y$ is either $0$, $n(q^2-1)$ or a multiple of $q-1$ between $(n-1)(q^2-q)$ and $n(q^2-q)$ (both included). Now, we verify the uniqueness. The fact is trivially true for an invariant of degree $0$ or $n(q^2-1)$. 

Now, suppose we have an invariant $y$ of degree $((n-1)q+k)(q-1)$ for some $k$ such that $0\leq k \leq q$. First, we express $y$ as in Equation~\eqref{eq:1}. In this sum, each $c_i$ must be some scalar multiple of $y_{n-1,q+k-i}$. Without loss of generality, let $c_q=y_{n-1,k}$. Now, each of the $c_i$'s contain some monomial (say $M_i$) having $x_1^{q^2-q}$ as a factor. As $y$ is assumed to be invariant, applying the transformation $x_1 \rightarrow x_n$, $x_n \rightarrow x_1$, $M_i$ transforms into a monomial occurring in $c_qx_n^{q^2-q} = y_{n-1,k}x_n^{q^2-q}$. All such monomials have scalar coefficient equal to $1$ by induction. Hence, the scalar coefficient of $M_i$ must be $1$ too. This implies that for each $i$, $c_i=y_{n-1,q+k-i}$, and thus, given its degree, $y$ is determined uniquely up to scalar multiples. This completes the proof of the theorem.

\section{The case $k=1$} \label{sec:k=1}

After having dealt with the case $m=2$, it becomes easier to make some predictions for the invariant ring in the general case. Recalling the conjectured Hilbert series for the invariant ring:
$$Hilb(Q^G,t)=\sum\limits_{k=0}^{\min(n,m)}t^{(n-k)(q^m-q^k)}{m \brack k}_{q,t}.$$

When $k=1$, the summand in the above expression is equal to $t^{(n-1)(q^m-q)}{m \brack 1}$. Inspired by our discussion above, we claim that we can find invariant polynomials whose degrees correspond to this `$k=1$' term of the Hilbert series. For simplicity, first we assume that $n=2$.

\begin{proposition} \label{prop:k=1}
Taking $Q = \mathbb{F}_q[x_1,x_2]/(x_1^{q^m},x_2^{q^m})$, the polynomial $y_{k'}$ is $G$-invariant for $0 \leq k' \leq \frac{q^m-q}{q-1}$, where:
$$y_{k'}:= x_1^{q^m-q}x_2^{k'(q-1)}+x_1^{q^m-2q+1}x_2^{(k'+1)(q-1)}+\cdots +x_1^{(k'+1)(q-1)}x_2^{q^m-2q+1}+x_1^{k'(q-1)}x_2^{q^m-q}.$$
\end{proposition}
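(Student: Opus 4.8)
The plan is to verify $G$-invariance using Lemma~\ref{lemma:intro1}: since every exponent appearing in $y_{k'}$ is a multiple of $q-1$, diagonal matrices act trivially, and since $y_{k'}$ is symmetric in $x_1, x_2$ it is fixed by the transposition; so the only thing to check is that the substitution $x_1 \mapsto x_1 + x_2$ (working in $Q = \mathbb{F}_q[x_1,x_2]/(x_1^{q^m},x_2^{q^m})$) fixes $y_{k'}$. Writing $a = q-1$, the polynomial is $y_{k'} = \sum_{i=k'}^{(q^m-q)/(q-1)} x_1^{i a}\, x_2^{(k' + (q^m-q)/(q-1) - i)a}$; equivalently, setting $u = x_1^{q-1}$, $v = x_2^{q-1}$ and $N = (q^m - q)/(q-1) = q + q^2 + \cdots + q^{m-1}$, we have $y_{k'} = x_1^{k'a} x_2^{k'a} \sum_{j=0}^{N - k'} u^j v^{N-k'-j} = x_1^{k'a}x_2^{k'a}\cdot \frac{u^{N-k'+1} - v^{N-k'+1}}{u - v}$ as a polynomial identity (before reducing mod the Frobenius power). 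So the real content is: this expression is invariant under $x_1 \mapsto x_1 + x_2$ in $Q$.

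The key step is therefore a direct expansion of $y_{k'}(x_1 + x_2, x_2)$ and a term-by-term comparison of coefficients, carried out modulo $(x_1^{q^m}, x_2^{q^m})$ and modulo $p$. I would expand each summand $(x_1+x_2)^{ia} x_2^{(k'+N-i)a}$ via the binomial theorem, collect the coefficient of a fixed monomial $x_1^{s} x_2^{t}$ with $s + t = (k'+N)a$ and $0 \le s, t \le q^m - 1$, and show that this coefficient equals what it was before the substitution — namely $1$ if $(q-1)\mid s$ (hence $(q-1)\mid t$) and $0$ otherwise. The coefficient of $x_1^s x_2^t$ after substitution is $\sum_{i} \binom{ia}{s} \pmod p$, where $i$ ranges over those indices with $ia \ge s$ and $(k'+N-i)a \le q^m - 1$ (the latter being the mod-$m$-th-Frobenius-power constraint); the task is to evaluate this sum of binomial coefficients mod $p$ using Lucas' theorem. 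When $(q-1)\mid s$, one isolates the single term $i = s/(q-1)$ (for which $\binom{ia}{s} = \binom{s}{s} = 1$) and shows all other $\binom{ia}{s} \equiv 0$; when $(q-1)\nmid s$, one shows the whole sum vanishes mod $p$. This is exactly the style of argument already used in the $n=2$, $m=2$ lemmas in Section~\ref{sec:m=2}, just with $q^2$ replaced by $q^m$, so it should go through in the same way.

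The main obstacle is the bookkeeping around the reduction modulo $\mathfrak{m}^{[q^m]}$: for small $i$ the term $(k' + N - i)a = (k'+N-i)(q-1)$ can exceed $q^m - 1$, so those summands vanish in $Q$ and must be excluded from the coefficient sums, and one has to check that excluding them does not spoil the cancellation. Concretely, $(k'+N-i)(q-1) \le q^m - 1$ forces $i \ge k' + N - (q^m-1)/(q-1) = k' + N - (N + 1) \cdot \tfrac{q-1}{q-1}$... more precisely $i \ge k' - 1 + q/(q-1)$, i.e. $i \ge k'$ exactly when $k' \le N$, which is the hypothesis $0 \le k' \le (q^m-q)/(q-1)$; so the range stated in the definition of $y_{k'}$ is already the correct one and no further terms are lost. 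The symmetric constraint $ia \le q^m - 1$ similarly caps $i$ at $N + k'$ when $k' \ge $ something, and again matches the displayed range. Once these index ranges are pinned down, the mod-$p$ evaluation of $\sum_i \binom{ia}{s}$ via Lucas is the same computation as before; alternatively, one can argue more slickly by observing that the polynomial identity $y_{k'} = x_1^{k'a} x_2^{k'a}(u^{N-k'+1} - v^{N-k'+1})/(u-v)$ together with the fact that $\sum_{l(\mathbf{x})} (t + l(\mathbf{x})) $-type Dickson-style products are $G$-stable lets one recognize $y_{k'}$ as built from Dickson invariants, but the elementary binomial argument is the most self-contained route and the one I would write out.
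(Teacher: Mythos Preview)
Your reduction to the substitution $x_1 \mapsto x_1 + x_2$ and the identification of the coefficient of $x_1^s$ after substitution as $\sum_i \binom{i(q-1)}{s}$ over the appropriate range of $i$ are both correct and match the paper. The gap is in your proposed evaluation of that sum: the claim that, when $(q-1)\mid s$, only the single index $i = s/(q-1)$ contributes is false once $m \ge 3$. For instance, with $q = p$ prime, $m = 3$, $k' = 0$ and $s = p-1$, Lucas gives $\binom{(p+1)(p-1)}{p-1} = \binom{p^2-1}{p-1} \equiv \binom{p-1}{0}\binom{p-1}{p-1} = 1$, and $i = p+1 \le N = p+p^2$ is in range; in fact every $i \equiv 1 \pmod p$ in $\{0,\dots,N\}$ contributes $1$, there are $p+1$ of them, and the sum is $p+1 \equiv 1$ --- the right answer, but not for the reason you gave. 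Likewise the vanishing for $(q-1)\nmid s$ cannot be obtained term by term. Your appeal to ``the style of argument already used in Section~\ref{sec:m=2}'' does not help either: those lemmas establish uniqueness and non-existence, while for the invariance of $y_{k'}$ itself Section~\ref{sec:m=2} explicitly defers to this very proposition.

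What is actually needed, and what the paper does, is a closed-form evaluation of the whole sum at once: recognise $\sum_{j=0}^{N} \binom{j(q-1)}{s}$ as the coefficient of $x^s$ in the geometric series
\[
\sum_{j=0}^{N}(1+x)^{j(q-1)} \;=\; \frac{(1+x)^{q^m-1}-1}{(1+x)^{q-1}-1},
\]
and then use the Frobenius identity $(1+x)^{q^r} = 1+x^{q^r}$ over $\mathbb{F}_q$ to collapse this to $\dfrac{x^{q^m-1}-1}{x^{q-1}-1} = 1 + x^{q-1} + \cdots + x^{q^m-q}$, from which both the $(q-1)\mid s$ and $(q-1)\nmid s$ cases are immediate. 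For general $k'$ one extends the truncated sum back down to $j=0$ (harmless for $s > (k'-1)(q-1)$) and treats the single boundary value $s = (k'-1)(q-1)$ separately, where the relation $x_2^{q^m}=0$ in $Q$ kills the extra term; that piece of the bookkeeping you essentially had.
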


\begin{proof}

In order to prove this, we need to check that each $y_{k'}$ remains invariant under the transformation $x_1 \rightarrow x_1 + x_2$. First, let $k'=0$. Then, after the above substitution, $y_0$ becomes:
\begin{equation}
(x_1+x_2)^{q^m-q}+(x_1+x_2)^{q^m-2q+1}x_2^{q-1}+\cdots +(x_1+x_2)^{q-1}x_2^{q^m-2q+1}+x_2^{q^m-q}.
\end{equation}

The coefficient of $x_1^tx_2^{q^m-q-t}$ in the substituted expression is ${q^m-q \choose t} + {q^m-2q+1 \choose t} + \cdots + {q-1 \choose t} + {0 \choose t}$ where those binomial coefficients which do not make sense are assumed to be zero. This sum is also the coefficient of $x^t$ in:
\begin{align*}
(1+x)^{q^m-q}+(1+x)^{q^m-2q+1}+\cdots+(1+x)^{q-1}+1&=\frac{(1+x)^{(q^m-1)}-1}{(1+x)^{q-1}-1}
\\&=\frac{\frac{(1+x)^{q^m}}{1+x}-1}{\frac{(1+x)^q}{(1+x)}-1}
\\&=\frac{\frac{1+x^{q^m}}{1+x}-1}{\frac{1+x^q}{1+x}-1}
\\&=\frac{x^{q^m}-x}{x^q-x}
\\&=\frac{x^{q^m-1}-1}{x^{q-1}-1}
\\&=1+x^{q-1}+x^{2(q-1)} + \cdots + x^{q^m-q}.
\end{align*}
And so, the coefficient of $x_1^tx_2^{q^m-q-t}$ in the substituted expression is $1$ if and only if $q-1|t$ and is $0$ otherwise. Thus, the substituted expression is equal to $y_0$, and thus, $y_0$ remains invariant under the action of $G$.

\begin{remark} \label{rem:DI}
Up till now, we haven't used the fact that $x_1^{q^m}=x_2^{q^m}=0$. Hence, for each $m$, $y_0$ is invariant in $S$ itself.
\end{remark}

For any other $k'$, the transformation $x_1 \rightarrow x_1 + x_2$ turns $y_{k'}$ to:
\begin{equation}
(x_1+x_2)^{q^m-q}x_2^{k'(q-1)} + (x_1+x_2)^{q^m-2q+1}x_2^{(k'+1)(q-1)}+\cdots +(x_1+x_2)^{k'(q-1)}x_2^{q^m-q}.
\end{equation}

Here, we need to check the coefficient of $x_1^tx_2^{q^m+(k'-1)(q-1)-1-t}$ for $(k'-1)(q-1) \leq t \leq q^m-q$. This is because, for smaller $t$, the exponent of $x_2$ exceeds $q^m-1$, and so, the term becomes zero. For $t > (k'-1)(q-1)$, the coefficient is equal to ${q^m-q \choose t} + {(q^m-2q+1 \choose t} + \cdots + {k'(q-1) \choose t}$ which is the coefficient of $x^t$ in
\begin{equation}
(1+x)^{q^m-q}+(1+x)^{q^m-2q+1}+\cdots+(1+x)^{k'(q-1)}.
\end{equation}

In fact, we can continue the above sum up to $1$ as the addition of these terms won't contribute to a coefficient of $x_t$. Hence, working similarly as when $k'=0$, we get the desired coefficients. For $t=(k'-1)(q-1)$, we get coefficient equal to $1$ because of the addition of the term $(1+x)^{(k'-1)(q-1)}$ to the above sum. Subtracting this, we get its coefficient to be zero, and so, we conclude that $y_k'$ remains invariant under the given transformation.
\end{proof}

This proposition also provides us the proof of the invariance of the polynomials we talked about in Section~\ref{subsec:n=2}.

Now, for a general $n$, mimicking the strategy followed by us in Section~\ref{subsec:n=arbit}, we have an easy corollary to the above proposition.

\begin{corollary}
$$a_{m,n,k'} : = \sum\limits_{\substack{i_1+i_2+\cdots+i_n=(n-1)\frac{q^m-q}{q-1}+k' \\ 0\leq i_1,i_2,\cdots,i_n \leq \frac{q^m-q}{q-1} }}\bigg(\prod\limits_{j=1}^n x_j^{i_j(q-1)}\bigg)$$ 

for $0 \leq k' \leq \frac{q^m-q}{q-1}$ are invariant polynomials in the ring $Q$ for any $n,m$.
\end{corollary}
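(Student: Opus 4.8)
The plan is to reduce the invariance of $a_{m,n,k'}$ to the single substitution $x_1 \to x_1 + x_2$, exactly as in Lemma~\ref{lemma:invar}, and then push the question down to the $n=2$ case handled by Proposition~\ref{prop:k=1}. Since $a_{m,n,k'}$ is symmetric in $x_1,\dots,x_n$ and every exponent is a multiple of $q-1$, Lemma~\ref{lemma:intro1} tells us invariance under permutations and diagonal matrices is automatic, so only the elementary substitution needs checking.

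First I would record the analogue of the recursive decomposition used in the proof of Lemma~\ref{lemma:invar}: writing $N = \frac{q^m-q}{q-1}$, one has
\begin{align*}
a_{m,n,k'} &= \sum_{i_n=0}^{N} \Bigg( \sum_{\substack{i_1+\cdots+i_{n-1}=(n-1)N+k'-i_n \\ 0\le i_1,\dots,i_{n-1}\le N}} \prod_{j=1}^{n-1} x_j^{i_j(q-1)} \Bigg) x_n^{i_n(q-1)} \\
&= \sum_{i_n=0}^{N} a_{m,n-1,\,N+k'-i_n}\, x_n^{i_n(q-1)},
\end{align*}
where $a_{m,n-1,\ell}$ is interpreted as zero whenever $\ell < 0$ or $\ell > N$, so the sum effectively runs over $\max(0,k') \le i_n \le \min(N, N+k')$. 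Since the substitution $x_1 \to x_1+x_2$ fixes $x_n$ for $n \ge 3$, induction on $n$ immediately gives invariance once the base case $n=2$ is established; and for $n=2$ the polynomial $a_{m,2,k'}$ is, up to relabelling, exactly the polynomial $y_{k'}$ of Proposition~\ref{prop:k=1}, which is already proven invariant there. (One should double-check the degenerate cases $n=1$, where $a_{m,1,k'} = x_1^{k'(q-1)}$ with $0 \le k' \le N$ lies below $q^m$ and is trivially invariant, and $k'=0$ versus $k'=N$, where the index bounds on $i_n$ pinch.)

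The only genuine content beyond bookkeeping is verifying that the reduction is legitimate, i.e. that each coefficient $c_i = a_{m,n-1,N+k'-i}$ appearing in the expansion in powers of $x_n$ really does lie in the smaller ring $\mathbb{F}_q[x_1,\dots,x_{n-1}]/(x_1^{q^m},\dots,x_{n-1}^{q^m})$ and is itself $G'$-invariant for $G' = Gl_{n-1}(\mathbb{F}_q)$ — this is what the induction hypothesis supplies — and that collecting powers of $x_n$ commutes with the substitution, which it does because $x_n$ is untouched. I expect the main (very mild) obstacle to be purely notational: keeping the index conventions straight so that the truncation $a_{m,n-1,\ell} = 0$ for $\ell \notin [0,N]$ matches the empty-sum convention, and confirming that $N + k' - i_n$ ranges exactly over $[0,N]$ as $i_n$ ranges over the stated interval, so that no spurious terms are introduced or lost when passing between the two forms of the sum. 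Once that is pinned down, the corollary follows formally from Proposition~\ref{prop:k=1} and induction on $n$, with no further computation required.
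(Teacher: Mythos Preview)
Your proposal is correct and follows essentially the same approach as the paper: both reduce to the substitution $x_1\to x_1+x_2$ via Lemma~\ref{lemma:intro1}, expand $a_{m,n,k'}$ as a sum $\sum_i a_{m,n-1,\,N+k'-i}\,x_n^{i(q-1)}$, and induct on $n$ with Proposition~\ref{prop:k=1} supplying the base case $n=2$. Your index bookkeeping is in fact cleaner than the paper's, which writes $a_{m,n-1,\,q+k'-i_n}$ where it should have $a_{m,n-1,\,N+k'-i_n}$ with $N=\frac{q^m-q}{q-1}$ (a copy-paste slip from the $m=2$ argument in Lemma~\ref{lemma:invar}).
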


The proof of this corollary uses exactly the same idea as used in Lemma~\ref{lemma:invar} after the following observation:
\begin{align*}
a_{m,n,k'} &= \sum\limits_{\substack{i_1+i_2+\cdots+i_n=(n-1)\frac{q^m-q}{q-1}+k' \\ 0\leq i_1,i_2,\cdots,i_n \leq \frac{q^m-q}{q-1} }}\bigg(\prod\limits_{j=1}^n x_j^{i_j(q-1)}\bigg)
\\&= \sum\limits_{i_n=0}^{\frac{q^m-q}{q-1}}\Bigg(\sum\limits_{\substack{i_1+i_2+\cdots+i_{n-1}=(n-1)\frac{q^m-q}{q-1}+k'-i_n \\ 0\leq i_1,i_2,\cdots,i_{n-1} \leq \frac{q^m-q}{q-1} }}\bigg(\prod\limits_{j=1}^{n-1} x_j^{i_j(q-1)}\bigg)\Bigg)x_n^{i_n(q-1)}
\\&= \sum\limits_{i_n=0}^{\frac{q^m-q}{q-1}} a_{m,n-1,q+k'-i_n}x_n^{i_n(q-1)}
\\&= \sum\limits_{i=k'}^{\frac{q^m-q}{q-1}} a_{m,n-1,q+k'-i}x_n^{i(q-1)}.
\end{align*}

Hence, the proven proposition and corollary provide us a set of invariant polynomials in the ring $\mathbb{F}_q[x_1,x_2,\cdots,x_n]/(x_1^{q^m},x_2^{q^m},\cdots,x_n^{q^m})$ which correspond to the the case `$k=1$' as for given values of $m$ and $n$, we have homogeneous invariant polynomials $a_{m,n,k'}$ for $0 \leq k' \leq \frac{q^m-q}{q-1}$, where the degree of $a_{m,n,k'}$ is equal to $(n-1)(q^m-q) + k'(q-1)$. The Hilbert series for the polynomials is thus given as $t^{(n-1)(q^m-q)} + t^{(n-1)(q^m-q)+q-1} + \cdots + t^{n(q^m-q)} = t^{(n-1)(q^m-q)}\frac{1-t^{q^m-1}}{1-t^{q-1}} = t^{(n-1)(q-1)}{m \brack 1}$, which is exactly the conjectured value. 

\section{Action of Steenrod operators} \label{sec:SO}

Now that we are done with the analysis of the `$k=1$' case of the Hilbert series, before moving on to further cases, we study the dependencies of the invariants we have talked about, viewed from the perspective of Steenrod operations. Throughout this section, we'll assume that $n=2$.

Recall (see \cite{LS}) that for a polynomial $f(x_1,x_2,\cdots,x_n) \in \mathbb{F}_q[x_1,x_2,\cdots,x_n]$, we define the action of Steenrod operators $P^i$ on $f$ as follows:
\begin{align*}
P(\xi)(f) :&= f(x_1 + x_1^q\xi, x_2 + x_2^q\xi, \cdots, x_n+x_n^q\xi)
\\&=\sum \limits_{i=0}^{\infty} P^i(f) \xi^i.
\end{align*}

It is easy to see that for any $f$, $P^0(f)=f$. Before moving towards the results of this section, we prove a lemma that motivates the study of these operators in the context of our problem.

\begin{lemma}
The Steenrod operators preserve the ideal $\mathfrak{m}^{[q^m]} = (x_1^{q^m},x_2^{q^m}, \cdots, x_n^{q^m})$, and hence, have a well-defined action on $Q$.
\end{lemma}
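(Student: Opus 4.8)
The plan is to show that each generator $x_i^{q^m}$ of the ideal $\mathfrak{m}^{[q^m]}$ is sent by $P(\xi)$ into $\mathfrak{m}^{[q^m]}[\xi]$, and then argue that this suffices. The starting point is the definition: $P(\xi)$ acts on a variable by $x_i \mapsto x_i + x_i^q\xi = x_i(1 + x_i^{q-1}\xi)$. Since $P(\xi)$ is an $\mathbb{F}_q$-algebra homomorphism from $S$ to $S[\xi]$ (it is evaluation at a substitution of the variables), it is multiplicative, so $P(\xi)(x_i^{q^m}) = (x_i + x_i^q\xi)^{q^m}$.

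\medskip

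\noindent The key computation is then just the Frobenius: working in characteristic $p$ where $q$ is a power of $p$, raising to the $q^m$-th power is additive, so
\begin{align*}
P(\xi)(x_i^{q^m}) = (x_i + x_i^q\xi)^{q^m} = x_i^{q^m} + x_i^{q^{m+1}}\xi^{q^m} = x_i^{q^m}\left(1 + x_i^{q^{m+1}-q^m}\xi^{q^m}\right),
\end{align*}
which manifestly lies in $(x_i^{q^m}) \subseteq \mathfrak{m}^{[q^m]}$, viewed inside $S[\xi]$. Hence $P^j(x_i^{q^m}) \in \mathfrak{m}^{[q^m]}$ for every $j$ (in fact $P^j(x_i^{q^m})$ vanishes unless $j=0$ or $j=q^m$).

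\medskip

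\noindent To finish, I would promote this from generators to the whole ideal. Every element of $\mathfrak{m}^{[q^m]}$ has the form $f = \sum_i g_i x_i^{q^m}$ with $g_i \in S$; applying the ring homomorphism $P(\xi)$ gives $P(\xi)(f) = \sum_i P(\xi)(g_i)\, P(\xi)(x_i^{q^m})$, and since each $P(\xi)(x_i^{q^m}) \in \mathfrak{m}^{[q^m]}[\xi]$ by the above, the whole sum lies in $\mathfrak{m}^{[q^m]}[\xi]$. Extracting the coefficient of $\xi^j$ shows $P^j(f) \in \mathfrak{m}^{[q^m]}$ for all $j$. Therefore each $P^j$ descends to a well-defined operator on the quotient $Q = S/\mathfrak{m}^{[q^m]}$, and the same holds for $P(\xi)$ regarded as a map $Q \to Q[\xi]$. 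There is essentially no obstacle here: the only thing to be careful about is invoking multiplicativity of $P(\xi)$ (it is a substitution homomorphism, so this is immediate) and the additivity of the $q^m$-power map in characteristic $p$; no subtler binomial-coefficient analysis via Lucas' theorem is needed for this particular statement.
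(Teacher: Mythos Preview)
Your proof is correct and follows essentially the same approach as the paper: both reduce to the generators $x_i^{q^m}$ via multiplicativity of $P(\xi)$ (the paper phrases this as the Cartan formula $P^d(fg)=\sum_i P^i(f)P^{d-i}(g)$, which is equivalent), and then both compute $(x_i+x_i^q\xi)^{q^m}=x_i^{q^m}+x_i^{q^{m+1}}\xi^{q^m}\in\mathfrak{m}^{[q^m]}[\xi]$ using Frobenius.
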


\begin{proof}
For any polynomial $f$ and $g \in \mathfrak{m}^{[q^m]}$, by the definition of the operators, we have
$$P^d(fg) =\sum\limits_{i=0}^d P^i(f)P^{d-i}(g)$$
for any $d$. Thus, we only need to verify our claim for the generators of the ring: $x_i^{q^m}$ for $1 \leq i \leq n$. This is easy to check as $P(\xi)(x_i^{q^m}) = (x_i+x_i^q\xi)^{q^m} = x_i^{q^m} + x_i^{q^{m+1}}\xi^{q^m}$ and $x_i^{q^m},x_i^{q^{m+1}} \in \mathfrak{m}^{[q^m]}$.
\end{proof}

\begin{remark}
The action of the Steenrod operators on the Dickson invariants $D_{n,i}$ can be found (when $q=p$) in \cite[\S II]{W}.
\end{remark}

In this section, we try to find a minimal additive basis for $Q^G$ with respect to the action of the Steenrod operators (For a connection to the 'hit problem' for Steenrod algebra, see \cite{RMW})

\subsection{$m=2$}
When $m=2$, we had found the invariants in Section~\ref{subsec:n=2} and they were named $y_{k'}$ where $0 \leq k' \leq q$. Using the same notation, and taking $k=k'+q$, we check the action of the first Steenrod operation on each $y_{k'}$. For compactness, we write $y_{k'} = \sum \limits_{i=k'}^q x_1^{(k-i)(q-1)}x_2^{i(q-1)}$. Then, we have:
$$P(y_{k'})=\sum \limits_{i=k'}^q (x_1+x_1^q\xi)^{(k-i)(q-1)}(x_2+x_2^q\xi)^{i(q-1)}.$$

The action of the first Steenrod operator on $y_k$ equals the coefficient of $\xi$ in the above expression. Hence,
\begin{align*}
P^1(y_{k'})&=\sum\limits_{i=k'}^q \{x_1^{(k-i)(q-1)} x_2^{i(q-1)-1}x_2^q i(q-1) + x_1^{(k-i)(q-1)-1}x_1^q (k-i)(q-1) x_2^{i(q-1)}\}\\
&=\sum\limits_{i=k'}^q\{-i.x_1^{(k-i)(q-1)}x_2^{(i+1)(q-1)} + (i-k).x_1^{(k-i+1)(q-1)}x_2^{i(q-1)}\}
\\&= - kx_1^{(k-k'+1)(q-1)}x_2^{k'(q-1)} -k\sum\limits_{i=k'+1}^q\{x_1^{(k-i+1)(q-1)}x_2^{i(q-1)}\} 
\\&\qquad+ \sum\limits_{i=k'}^q\{i(x_1^{(k-i+1)(q-1)}x_2^{i(q-1)} - x_1^{(k-i)(q-1)}x_2^{(i+1)(q-1)})\}
\\&= - kx_1^{(k-k'+1)(q-1)}x_2^{k'(q-1)} - ky_{k'+1} + k'x_1^{(k-k'+1)(q-1)}x_2^{k'(q-1)} + y_{k'+1} 
\\&= (1-k)y_{k'+1}.
\end{align*}

Thus, we have proved the following proposition:

\begin{proposition}
For all $k'$ such that $0 \leq k' <q$, $P^1(y_{k'})=(1-k)y_{k'+1}$, and hence, when $(1-k)\not\equiv 0 \pmod{p}$, we can use $P^1$ to obtain $y_{k'+1}$ from $y_{k'}$. 
\end{proposition}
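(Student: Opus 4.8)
The plan is to compute $P^1(y_{k'})$ directly from the definition of the Steenrod operator and then simplify the coefficients by reductions modulo $p$. First I would record that $P(\xi)$ is the ring homomorphism $x_i \mapsto x_i + x_i^q\xi$, so that $P^1$ is a derivation; in particular, reading off the coefficient of $\xi$ in $(x_1+x_1^q\xi)^a(x_2+x_2^q\xi)^b$ gives $P^1(x_1^ax_2^b) = a\,x_1^{a+q-1}x_2^b + b\,x_1^ax_2^{b+q-1}$, which by linearity determines $P^1$ on all of $Q$. Applying this to the defining sum $y_{k'} = \sum_{i=k'}^q x_1^{(k-i)(q-1)}x_2^{i(q-1)}$ (with $k=k'+q$) produces two families of monomials: one obtained by raising each $x_1$-exponent by $q-1$, the other by raising each $x_2$-exponent by $q-1$. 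Note that every monomial that appears still has both exponents at most $q^2-1$, so no reduction modulo $\mathfrak{m}^{[q^2]}$ is needed.

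Next I would reduce the resulting coefficients modulo $p$. Since $q-1\equiv -1\pmod p$, the factor $(k-i)(q-1)$ coming from differentiating in $x_1$ becomes $i-k$, and $i(q-1)$ becomes $-i$; setting $f(i) := x_1^{(k-i+1)(q-1)}x_2^{i(q-1)}$ one gets $P^1(y_{k'}) = \sum_{i=k'}^q\big((i-k)f(i) - i\,f(i+1)\big)$. The crucial structural point is that the two families overlap after an index shift — the $x_2$-raised monomial of index $i$ is exactly $f(i+1)$ — so I would finish with a summation by parts (Abel summation), which collapses the $i$-dependent coefficients and leaves $(1-k)\sum_{i=k'+1}^q f(i)$ together with two boundary terms coming from $i=k'$ and $i=q+1$. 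Recognizing $\sum_{i=k'+1}^q f(i)$ as $y_{k'+1}$ (since $(k'+1)+q = k+1$) produces the claimed leading term.

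The last thing to check is that the two boundary monomials drop out: their coefficients are $k'-k = -q$ and $-q$, hence $\equiv 0\pmod p$, so they vanish in $Q$ and we obtain $P^1(y_{k'}) = (1-k)y_{k'+1}$. The ``hence'' clause is then immediate, since $1-k$ is a nonzero scalar in $\mathbb{F}_q$ precisely when $1-k\not\equiv 0\pmod p$, in which case $y_{k'+1} = (1-k)^{-1}P^1(y_{k'})$. I expect the only delicate point to be the index bookkeeping in the telescoping step — correctly locating the two endpoint monomials and confirming their coefficients are multiples of $q$ — since everything else is a routine extraction of a $\xi$-coefficient followed by Lucas-type reductions.
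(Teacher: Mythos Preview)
Your proposal is correct and follows essentially the same approach as the paper: apply the derivation rule $P^1(x_1^ax_2^b)=a\,x_1^{a+q-1}x_2^b+b\,x_1^ax_2^{b+q-1}$, reduce the coefficients using $q-1\equiv -1\pmod p$, and then telescope (the paper splits off $-k\sum f(i)$ and a sum $\sum i(f(i)-f(i+1))$, while you do a single Abel summation, but this is the same manipulation). The boundary terms you identify, with coefficients $k'-k=-q$ and $-q$, are exactly the ones the paper implicitly drops, so your bookkeeping is fine; the only nitpick is that the reduction $q\equiv 0$ is not really ``Lucas-type'' but just the fact that $p\mid q$.
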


Next, we prove a proposition that essentially will demonstrate that the action of the Steenrod operators on just 2 of the invariant polynomials is sufficient for generating all the other invariants.

\begin{proposition}
By applying the Steenrod operations $P_1, P_2, \cdots, P_{q-2}$ on $y_2$, we get non-zero scalar multiples of $y_3, y_4,\cdots, y_q$ respectively.
\end{proposition}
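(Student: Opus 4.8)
The plan is to iterate the previous proposition. We have just shown that $P^1(y_{k'}) = (1-k)y_{k'+1}$ where $k = k'+q$. The idea is to apply $P^1$ repeatedly starting from $y_2$: set $y_2$ (so $k'=2$, $k=q+2$), apply $P^1$ to get a scalar multiple of $y_3$, apply $P^1$ again to get a scalar multiple of $y_4$, and so on, until we reach $y_q$. This requires $q-2$ applications of $P^1$, and at each stage $j$ (for $j = 2,3,\dots,q-1$) the scalar picked up is $1-(j+q)$, which I must check is nonzero modulo $p$.

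First I would make the reduction explicit: since the proposition as stated refers to ``$P_1,P_2,\dots,P_{q-2}$'' applied to $y_2$, I interpret this as successive applications of the first Steenrod operator $P^1$, i.e. $P^1 \circ P^1 \circ \cdots \circ P^1$ ($j$ times) applied to $y_2$ yields a scalar multiple of $y_{2+j}$ for $1 \leq j \leq q-2$. (If instead the intended meaning involves the higher operators $P^i$ directly, the computation of $P^i(y_{k'})$ would proceed by extracting the coefficient of $\xi^i$ in $\sum_{i'=k'}^q (x_1+x_1^q\xi)^{(k-i')(q-1)}(x_2+x_2^q\xi)^{i'(q-1)}$ in the same style as the previous proposition; I would carry out that extraction and collect terms, but the telescoping route via $P^1$ is cleaner.) Then I would invoke the previous proposition $q-2$ times in sequence.

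Second, the key arithmetic check: applying $P^1$ to $y_{k'}$ gives the scalar $1-k = 1-(k'+q)$. Starting at $y_2$ and going up to $y_q$, the scalars that appear are $1-(q+2), 1-(q+3), \dots, 1-(q+q) = 1-2q$. I need each of these to be nonzero mod $p$, i.e. $k' + q \not\equiv 1 \pmod p$ for $k' = 2,3,\dots,q-1$. Equivalently, since $q$ is a power of $p$, $q \equiv 0 \pmod p$, so the condition becomes $k' \not\equiv 1 \pmod p$ for $2 \leq k' \leq q-1$. The only problematic value would be $k' \equiv 1 \pmod p$ in that range — and indeed $k' = p+1, 2p+1, \dots$ could occur when $q > p$. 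So I should be careful here: the clean statement ``nonzero scalar multiples'' seems to require $q = p$, or else a more careful argument.

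The main obstacle I anticipate is exactly this: for $q = p^e$ with $e > 1$, the naive iteration of $P^1$ stalls at the steps where $k' \equiv 1 \pmod p$, since $(1-k) \equiv 0$ there and $P^1(y_{k'}) = 0$. To handle the general case I would need to use the higher Steenrod operators $P^i$ to ``jump over'' these bad indices — compute $P^i(y_2)$ directly and show that for a suitable choice of exponents (guided by Lucas' theorem on the binomial coefficients $\binom{(k-i')(q-1)}{\,\cdot\,}$ and $\binom{i'(q-1)}{\,\cdot\,}$ appearing in the $\xi^i$-coefficient) one lands on a nonzero multiple of $y_{2+i}$. This is where the real work lies; the telescoping-$P^1$ argument only settles the case $q = p$ cleanly, and I would flag that the general statement needs this extra input via Lucas' theorem to control which coefficients survive mod $p$.
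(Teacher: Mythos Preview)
Your reading of the statement is the wrong one. The proposition asserts that each of the \emph{distinct} Steenrod operators $P^1, P^2, \ldots, P^{q-2}$, applied once to $y_2$, yields a nonzero scalar multiple of $y_3, y_4, \ldots, y_q$ respectively --- not that $P^1$ is iterated $q-2$ times. You raise this second interpretation yourself and then set it aside, but it is precisely the route the paper takes: it extracts the coefficient of $\xi^r$ in $P(\xi)(y_2) = \sum_{i=0}^{q-2}(x_1+x_1^q\xi)^{(q-i)(q-1)}(x_2+x_2^q\xi)^{(i+2)(q-1)}$ and proves the closed-form identity $P^r(y_2) = \binom{q-1}{q-r-1}\, y_{r+2}$ via a binomial manipulation using Lucas' theorem. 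The scalar $\binom{q-1}{r}$ is nonzero modulo $p$ for every $0 \le r \le q-1$ (all base-$p$ digits of $q-1$ equal $p-1$), so the conclusion holds uniformly for all prime powers $q$.

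Your diagnosis of the obstacle in the iterated-$P^1$ approach is exactly right and is the reason that route does not prove the proposition as stated: for $q=p^e$ with $e>1$, the factor $1-(k'+q)\equiv 1-k'\pmod p$ vanishes whenever $k'\equiv 1\pmod p$, so the chain $y_2 \xrightarrow{P^1} y_3 \xrightarrow{P^1} \cdots$ dies at $y_{p+1}$ and never reaches $y_{p+2},\ldots,y_q$. Even if one then patches by jumping with a higher $P^i$ at the bad indices, one has merely reinvented the direct computation of $P^r(y_2)$ for certain $r$; the paper's approach does this cleanly in one stroke for all $r$ and delivers the explicit scalar $\binom{q-1}{r}$. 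So the genuine gap in your proposal is the misinterpretation of which operators are being applied; once that is corrected, the ``real work'' you anticipate is exactly the binomial identity the paper proves.
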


Using the same notation as defined earlier:
\begin{align*}
P(\xi)(y_2)&=\sum \limits_{i=2}^q \{P(\xi)(x_1)\}^{(q+2-i)(q-1)}\{P(\xi)(x_2)\}^{i(q-1)}
\\&=\sum \limits_{i=0}^{q-2} (x_1+x_1^q\xi)^{(q-i)(q-1)}(x_2+x_2^q\xi)^{(i+2)(q-1)}.
\end{align*}

For $1 \leq r \leq q-2, P^r(y_2)$ is the coefficient of $\xi^r$ in $P(y_2)$ which is
\begin{equation}
\sum\limits_{i=0}^{q-2} \sum\limits_{j=0}^r {(q-i)(q-1) \choose j}x_1^{(q-i+j)(q-1)} {(i+2)(q-1) \choose r-j} x_2^{(i+2+r-j)(q-1)}.
\end{equation}

Then, the following lemma proves the above proposition.

\begin{lemma}
$P^r(y_2) = {q-1 \choose q-r-1}y_{r+2}$ or
$$\sum\limits_{i=0}^{q-2} \sum\limits_{j=0}^r {(q-i)(q-1) \choose j}x_1^{(q-i+j)(q-1)} {(i+2)(q-1) \choose r-j} x_2^{(i+2+r-j)(q-1)}$$
$$={q-1 \choose q-r-1}\sum \limits_{i=0}^{q-r-2} x_1^{(q-i)(q-1)}x_2^{(i+r+2)(q-1)}.$$
\end{lemma}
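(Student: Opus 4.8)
The plan is to establish the claimed identity of polynomials by comparing coefficients of each monomial $x_1^{a(q-1)}x_2^{b(q-1)}$ on the two sides, where necessarily $a+b = 2q+2+r$ (the total "weight" in units of $q-1$ is $(2q+2+r)(q-1)$, since $P^r$ raises degree by $r(q-1)$ and $y_2$ has weight $2q+2$). On the left, fixing $a$ corresponds to summing over pairs $(i,j)$ with $q-i+j = a$, i.e. $j$ ranges over an interval and $i = q-a+j$; on the right, the monomial $x_1^{(q-i_0)(q-1)}x_2^{(i_0+r+2)(q-1)}$ with $q-i_0 = a$ appears with coefficient $\binom{q-1}{q-r-1}$ precisely when $0 \le i_0 = q-a \le q-r-2$, i.e. $r+2 \le a \le q$. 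So the real content is a binomial-coefficient identity modulo $p$: for each admissible $a$,
$$\sum_{j} \binom{a(q-1)}{j}\binom{(2q+2+r-a)(q-1)}{r-j} \equiv \binom{q-1}{q-r-1} \pmod p$$
when $r+2 \le a \le q$, and $\equiv 0$ otherwise.

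The key tool will be Lucas' Theorem (stated in the excerpt) together with the Vandermonde-type observation that $\sum_j \binom{A}{j}\binom{B}{r-j} = \binom{A+B}{r}$. Here $A+B = (2q+2)(q-1) = q^2 + q - 2 - (q^2 - q) \cdots$; more usefully, $A + B = (2q+2)(q-1)$, whose base-$p$ (indeed base-$q$) digits I would compute explicitly: $(2q+2)(q-1) = 2q^2 - 2 = (q-1) + (q-1)q + \cdots$ — I would write $2q^2 - 2$ in base $q$ as $2q^2 - 2 = 1\cdot q + (q-1) + (q-1)q + \ldots$; the cleanest route is $2(q-1)(q+1) = 2(q^2-1)$, so $A+B = 2q^2 - 2$, which in base $q$ is $(1)(q^2) + (q-2)(q) + (q-2)$ — wait, I should just do this carefully, since $2q^2 - 2 = q^2 + (q^2 - 2) = q^2 + (q-1)q + (q-2)$. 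Thus $\binom{A+B}{r} = \binom{2q^2-2}{r}$; since $0 \le r \le q-2 < q$, only the units digit matters, giving $\binom{q-2}{r} \pmod p$. However, the naive Vandermonde sum overcounts: it includes pairs $(j, r-j)$ coming from exponents of $x_1$ that are not multiples of $q-1$, or equivalently it does not track the constraint $a \le q$ that comes from $x_1^{q^2} = 0$ in $Q$. So the actual strategy is: expand $(x_1 + x_1^q\xi)^{a(q-1)}(x_2+x_2^q\xi)^{b(q-1)}$ keeping only terms where the $x_1$-exponent and $x_2$-exponent are both multiples of $q-1$ and both $\le q^2-1$, and show the surviving coefficient is as claimed.

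The cleanest proof avoids the messy double sum entirely: I would instead compute $P(\xi)(y_2)$ directly from the closed form. Recall from Proposition~\ref{prop:k=1} (with $m=2$) that $y_{k'} = x_1^{q^2-q}x_2^{k'(q-1)} + \cdots + x_1^{k'(q-1)}x_2^{q^2-q}$ is $G$-invariant, and from the preceding proposition that $P^1(y_{k'}) = (1-k)y_{k'+1}$ where $k = k'+q$. Iterating, $P^1$ applied $r$ times to $y_2$ gives $\prod_{s=2}^{r+1}(1 - (s+q)) \cdot y_{r+2}$ up to the combinatorial relation between $P^r$ and powers of $P^1$ in the Steenrod algebra. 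But $P^r$ is not simply $(P^1)^r$; the correct relation comes from the fact that the total Steenrod operation $P(\xi)$ is multiplicative and that on a product one has the Cartan formula, so $P(\xi)(y_2)$ can be read off degree by degree. The most robust plan: show both sides of the claimed identity are equal to $P^r(y_2)$ by verifying that the left side is literally the coefficient of $\xi^r$ in $P(\xi)(y_2)$ (this is immediate from the definitions and the binomial expansion, already displayed in the excerpt as equation~(8)), and then prove the binomial identity above by the base-$q$ digit analysis via Lucas.

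The main obstacle will be the bookkeeping in the binomial-coefficient identity: one must show that the inner sum $\sum_j \binom{a(q-1)}{j}\binom{b(q-1)}{r-j}$ collapses to the single constant $\binom{q-1}{q-r-1}$. The trick I expect to need is to recognize $\binom{a(q-1)}{j} \pmod p$ via Lucas using the base-$q$ expansion of $a(q-1) = aq - a$: when $1 \le a \le q$, $aq - a = (a-1)q + (q-a)$, so its base-$q$ digits are $(a-1, q-a)$, forcing $j$ to have the form $j = j_1 q + j_0$ with $j_1 \le a-1$, $j_0 \le q-a$, and then $\binom{a(q-1)}{j} \equiv \binom{a-1}{j_1}\binom{q-a}{j_0}$. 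Since we ultimately want the $x_1$-exponent $(a-i+j)(q-1)$ — wait, no: in equation~(8) the $x_1$-exponent is $(q-i+j)(q-1)$, which already enforces that only $j$'s making this $\le q^2-1$ survive, i.e. $q - i + j \le q$, i.e. $j \le i$; combined with $\binom{(i+2)(q-1)}{r-j}$ requiring $r - j \le$ something, the ranges pin down the sum. I would carefully track these digit constraints, handle the boundary cases $a = r+2$ and $a = q$ (where a digit hits $0$), and confirm the leftover factor is exactly $\binom{q-1}{q-r-1}$ — plausibly this emerges as $\binom{q-2}{r}$ from a Vandermonde collapse after the forced terms are removed, and then one checks $\binom{q-2}{r} \equiv \binom{q-1}{q-r-1} \pmod p$, which follows since $\binom{q-1}{q-r-1} = \binom{q-1}{r} \equiv (-1)^r \equiv \binom{q-2}{r}\cdot(\text{sign})$ — I would verify this last congruence directly using $\binom{q-1}{r} \equiv (-1)^r$ and $\binom{q-2}{r} \equiv (-1)^r(r+1)$... so in fact the constants may differ, meaning the precise surviving coefficient must be computed, not guessed, and matching it to $\binom{q-1}{q-r-1}$ exactly is the crux of the argument.
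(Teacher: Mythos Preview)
Your reduction to comparing coefficients of a fixed monomial $x_1^{a(q-1)}x_2^{b(q-1)}$ is the right move, but the formula you write for that coefficient is wrong. Expanding $(x_1+x_1^q\xi)^{(q-i)(q-1)}$, the term contributing $x_1^{a(q-1)}\xi^j$ (with $a=q-i+j$) carries $\binom{(q-i)(q-1)}{j}=\binom{(a-j)(q-1)}{j}$, not $\binom{a(q-1)}{j}$. Hence the coefficient of $x_1^{a(q-1)}x_2^{b(q-1)}$ in $P^r(y_2)$ is
\[
\sum_{j}\binom{(a-j)(q-1)}{j}\binom{(b-r+j)(q-1)}{r-j},
\]
whose upper entries genuinely depend on $j$. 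This is why Vandermonde $\sum_j\binom{A}{j}\binom{B}{r-j}=\binom{A+B}{r}$ does not apply: the failure is not ``overcounting due to the constraint $a\le q$'' but that the sum is not of Vandermonde shape at all. Your computation of $\binom{2q^2-2}{r}$ and the subsequent base-$q$ digit analysis of $\binom{a(q-1)}{j}$ are therefore analysing the wrong quantity, and the proposal ends (as you acknowledge) with no mechanism to evaluate the correct sum.

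The paper supplies the missing idea. After the substitution $z=x_2^{q-1}/x_1^{q-1}$ and the Lucas simplifications $\binom{(q-i)(q-1)}{j}\equiv\binom{i}{j}$ and $\binom{(i+2)(q-1)}{r-j}\equiv\binom{q-i-2}{r-j}$ (valid because $0\le j,\,r-j<q$), the coefficient of $z^k$ becomes $\sum_{i=0}^{q-2}\binom{i}{k}\binom{q-i-2}{q-2-r-k}$. The trick is to read this as the coefficient of $a^{k}b^{\,q-2-r-k}$ in the two-variable sum $\sum_{i=0}^{q-2}(1+a)^{i}(1+b)^{q-2-i}$, a geometric series that telescopes to $\dfrac{(1+a)^{q-1}-(1+b)^{q-1}}{a-b}$. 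Extracting the coefficient from this closed form gives $\binom{q-1}{q-r-1}$ on the nose, independent of $k$ in the required range. This generating-function evaluation is the step your outline is missing.
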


\begin{proof}
In order to prove this identity, we first transform it to a simpler form. Firstly, cancel $x_1^{q(q-1)}x_2^{(r+2)(q-1)}$ from both sides. Next, replace $x_1^{q-1}$ and $x_2^{q-1}$ by $z_1$ and $z_2$ respectively, and next, replace $z_2/z_1$ by $z$. Thus, the identity to be proven transforms into:
\begin{equation}
\sum\limits_{i=0}^{q-2}\sum\limits_{j=0}^r {(q-i)(q-1) \choose j}{(i+2)(q-1) \choose r-j} z^{i-j} = {q-1 \choose q-r-1}\sum \limits_{i=0}^{q-r-2} z^{i}.
\end{equation}

Now, as we are in $\mathbb{F}_q$, we have ${(q-i)(q-1) \choose j} = {i \choose j}$ and ${(i+2)(q-1) \choose r-j} = {q-i-2 \choose r-j}$ by Lucas's Theorem. Finally, putting $k=i-j$ and suitably modifying the limits for summation, we get:
\begin{equation}
\sum\limits_{i=0}^{q-2}\sum\limits_{k=i-r}^i {i \choose k}{q-i-2 \choose q-2-r-k} z^k = {q-1 \choose q-r-1}\sum \limits_{k=0}^{q-r-2} z^k.
\end{equation}

Next, we notice that for $k>i$ and for $k<i-r$, the summand on the LHS becomes 0. So, we can sum over $k$ from $0$ to $q-2$. Then, the coefficient of $z^k$ on the LHS becomes equal to $\sum\limits_{i=0}^{q-2} {i \choose k}{q-i-2 \choose q-2-r-k}$. (For $k>q-r-2$, this is zero exactly as in the RHS). This is the coefficient of $a^kb^{q-2-r-k}$ in:
\begin{align*}
\sum\limits_{i=0}^{q-2} (1+a)^i(1+b)^{q-i-2} &= (1+a)^{q-2} \sum\limits_{i=0}^{q-2} \bigg(\frac{1+b}{1+a} \bigg)^{q-i-2}
\\&=(1+a)^{q-2} \frac{1-\bigg(\frac{1+b}{1+a}\bigg)^{q-1}}{1-\frac{1+b}{1+a}}
\\&=\frac{(1+a)^{q-1} - (1+b)^{q-1}}{a-b}
\\&=\frac{\sum\limits_{i=0}^{q-1}{q-1 \choose i}(a^i-b^i)}{a-b}
\\&=\sum\limits_{i=0}^{q-1} {q-1 \choose i} \sum\limits_{j=0}^{i-1}a^jb^{i-j-1}.
\end{align*}

The coefficient of $a^kb^{q-2-r-k}$ in the above expression is exactly ${q-1 \choose q-r-1}$ which is exactly the coefficient of $z^k$ in the RHS of the identity we are trying to prove. Hence the two expressions are equal in $\mathbb{F}_q$. 
\end{proof}

An important observation here is that ${q-1 \choose q-r-1} = {q-1 \choose r} \neq 0$ for the given values of $r$ (again by Lucas' Theorem). This signifies that given $y_0$ and $y_2$, we can generate all the other $y_k$'s (up to scalar multiples) by the action of the Steenrod operations on these.

\subsection{$m \geq 2$}
Now, we look at the case when $n$ is still 2 and $m$ is arbitrary. Then, corresponding to the `$k=1$' case, from Section~\ref{sec:k=1}, we have the invariants $a_{m,2,k'}$ where $0 \leq k' \leq \frac{q^m-q}{q-1}$. Then, we make the following conjecture about the relationships between these invariants by means of the Steenrod operations.

\begin{conjecture}
For a fixed $m$, consider the set $A:= \{ a_{m,2,k'} : 0\leq k' \leq \frac{q^m-q}{q-1} \}$ and the set $B:= \{a_{m,2,k'} : k'=0, 1+\frac{q^t-1}{q-1} \text{for } 1\leq t\leq m-1\}$. Then, it is possible to generate all the elements of $A$ by the action of Steenrod operations on the elements of the set $B$. $a_{m,2,0}$ generates itself and $a_{m,2,1}$. For any $t$ such that $1 \leq t < m-1$ and $k'=1+\frac{q^t-1}{q-1}$, $a_{m,2,k'}$ generates $a_{m,2,l}$ for $l=k', k'+1, k'+2, \cdots, k'+q^t$. For $t=m-1$, $a_{m,2,k'}$ generates $a_{m,2,l}$ for $l=k', k'+1,\cdots, k'+q^t-1$.
\end{conjecture}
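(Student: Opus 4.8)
The plan is to mimic the computation already carried out for $m=2$, where we found $P^1(y_{k'}) = (1-k)y_{k'+1}$ and, more substantially, computed $P^r(y_2)$ explicitly as a scalar multiple of $y_{r+2}$. The key structural input is the recursion that defines the $a_{m,2,k'}$: writing $a_{m,2,k'} = \sum_{i=k'}^{N} x_1^{(k-i)(q-1)}x_2^{i(q-1)}$ where $N = \frac{q^m-q}{q-1}$ and $k = k'+N$, applying $P(\xi)$ replaces $x_1 \mapsto x_1 + x_1^q \xi$ and $x_2 \mapsto x_2 + x_2^q\xi$, so that $P^r(a_{m,2,k'})$ is the coefficient of $\xi^r$ in $\sum_i (x_1+x_1^q\xi)^{(k-i)(q-1)}(x_2+x_2^q\xi)^{i(q-1)}$. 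Expanding and collecting, one gets a double sum over $i$ and a split index $j$ of products of binomial coefficients $\binom{(k-i)(q-1)}{j}\binom{i(q-1)}{r-j}$ times monomials $x_1^{(k-i+j)(q-1)}x_2^{(i+r-j)(q-1)}$. The claim is that this double sum collapses to a scalar multiple of $a_{m,2,k'+r}$ precisely in the ranges of $r$ asserted.

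The heart of the argument is the binomial-coefficient identity obtained after the same normalization used in the $m=2$ lemma: cancel a common monomial, substitute $z_1 = x_1^{q-1}$, $z_2 = x_2^{q-1}$, $z = z_2/z_1$, and reduce the binomial coefficients modulo $p$. Here is where the base-$q$ digit structure of $k'$ enters. For $k' = 1 + \frac{q^t-1}{q-1} = 1 + (1 + q + \cdots + q^{t-1})$, the exponent $(k-i)(q-1)$ and $i(q-1)$ have controlled base-$q$ expansions, and by Lucas's theorem $\binom{(k-i)(q-1)}{j}$ and $\binom{i(q-1)}{r-j}$ vanish unless the digits of $j$ are dominated appropriately; this is what forces $j$ to lie in a narrow window and makes the double sum telescope, exactly as $\binom{(q-i)(q-1)}{j} = \binom{i}{j}$ and $\binom{(i+2)(q-1)}{r-j} = \binom{q-i-2}{r-j}$ did in the $m=2$ case. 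After reindexing ($k = i-j$), the coefficient of $z^k$ becomes a single sum $\sum_i \binom{\alpha_i}{\cdot}\binom{\beta_i}{\cdot}$ that one recognizes, via a generating-function manipulation $\sum_i (1+a)^{\alpha}(1+b)^{\beta} = \frac{(1+a)^{?} - (1+b)^{?}}{a-b}$, as a single binomial coefficient $\binom{q^t-1}{r}$ (or $\binom{q^{m-1}}{r}$ type expression at the top level) times $\sum_k z^k$. The distinction between $t < m-1$ (range $l = k', \ldots, k'+q^t$, an extra $+1$) and $t = m-1$ (range only up to $k'+q^t-1$) should come from whether the relevant top binomial coefficient — something like $\binom{q^t}{q^t}$ versus $\binom{q^{m-1}}{q^{m-1}}$, the latter killed because $q^{m-1}$ exceeds $N$ or because the corresponding monomial falls outside the support of $Q$ — survives; this boundary effect is the analogue of discarding "some of the initial terms" in Lemma~\ref{lemma:invar}.

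I would organize the proof as follows. First, establish the recursion $P(\xi)(a_{m,2,k'}) = \sum_{i} (x_1+x_1^q\xi)^{(k-i)(q-1)}(x_2+x_2^q\xi)^{i(q-1)}$ and extract $P^r$ as a double sum. Second, perform the normalization to reduce to a pure identity in $\mathbb{F}_q[z]$. Third, apply Lucas's theorem to rewrite each binomial coefficient in terms of the base-$q$ digits of $k'$, using the specific form $k' = 1 + \frac{q^t-1}{q-1}$ so that $k'(q-1) = q^t - 1 + (q-1)$ has all-ones low digits, and deduce the vanishing that restricts $j$ and causes telescoping. Fourth, evaluate the resulting single sum by the generating-function trick to identify the scalar as a nonzero binomial coefficient (again checked nonzero via Lucas), pinning down the exact range of $l$. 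Finally, note that $a_{m,2,0}$ generating $a_{m,2,1}$ is just the $r=1$ instance, and that covering all $k'$ from $0$ to $N$ by these overlapping blocks $[1+\frac{q^t-1}{q-1},\ 1+\frac{q^t-1}{q-1}+q^t]$ for $t = 1, \ldots, m-1$ together with $\{0,1\}$ is an elementary check on the partial sums $\frac{q^m-q}{q-1} = q + q^2 + \cdots + q^{m-1}$.

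The main obstacle I expect is not the generating-function evaluation — that is essentially the $m=2$ computation verbatim — but controlling the Lucas-theorem vanishing for general $m$: in the $m=2$ case every exponent was below $q^2$ so each binomial reduced to a single two-digit Lucas factor, whereas for larger $m$ the exponents $(k-i)(q-1)$ can carry across several base-$q$ digits as $i$ ranges, and one must argue carefully that the "cross terms" in the double sum genuinely cancel rather than merely hoping they telescope. Nailing down exactly which $i$ contribute — and in particular the off-by-one at $t = m-1$ caused by $x_i^{q^m} = 0$ truncating the polynomial — is the delicate point, and it is likely why the statement is posed as a conjecture rather than a theorem.
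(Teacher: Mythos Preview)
The statement you are attempting to prove is labeled a \emph{conjecture} in the paper, and the paper does not supply a proof. What the paper does provide, immediately after stating the conjecture, is exactly the reduction you outline: one expands $P^r(a_{m,2,k'})$ as the double sum in $i$ and $j$, performs the same normalization (cancel a common monomial, pass to $z = x_2^{q-1}/x_1^{q-1}$), and arrives at the claim that the expression
\[
\sum_{i=0}^{\frac{q^m-q^t-2q+2}{q-1}} \binom{(q-1)\bigl(\tfrac{q^m-q}{q-1}-i\bigr)}{j}\binom{(q-1)\bigl(\tfrac{q^t+q-2}{q-1}+i\bigr)}{r-j} \pmod{p}
\]
is independent of $j$ for $0 \le j \le r$. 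The paper stops there and leaves this identity unproved.

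Your plan is therefore the same as the paper's, and you have correctly located the obstruction: the Lucas-theorem vanishing and the generating-function telescoping that worked cleanly for $m=2$ (where every exponent had at most two base-$q$ digits) are not obviously available for general $m$, because the exponents $(k-i)(q-1)$ sweep through many-digit numbers as $i$ varies and the carries are harder to control. Your proposal does not close this gap --- it describes what one would \emph{like} to happen (``forces $j$ to lie in a narrow window and makes the double sum telescope'') without actually executing the digit analysis --- and neither does the paper. So your write-up is an accurate account of the intended strategy and its sticking point, but it is not a proof, and none is expected: the paper itself presents this only as a conjecture.
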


What we have already proven agrees with this conjecture for the case $m=2$. The proof in the general case should involve the verification of identities involving binomial coefficients just like the case $m=2$. If we proceed in the same manner as above, it all boils down to proving that the expression
$$\sum\limits_{i=0}^{\frac{q^m - q^t - 2q + 2}{q-1}}{(q-1)(\frac{q^m-q}{q-1}-i) \choose j}{(q-1)(\frac{q^t+q-2}{q-1}+i) \choose r-j} \pmod{p}$$
is independent of $j$ when $0 \leq j \leq r$.

\section{The case $n=k=m-1$} \label{sec:n=k=m-1}

Recall that the conjectured Hilbert series is given as:
$$Hilb(Q^G,t)=\sum\limits_{k=0}^{\min(n,m)}t^{(n-k)(q^m-q^k)}{m \brack k}_{q,t}.$$
In this section, we'll investigate the case $n=k=m-1=2$ first, and then try to tackle the problem for arbitrary $n$. The reason for treating different values of $k$ separately stems from our belief that the invariants can be systematically categorised on the basis of the value of $k$ to which they correspond. For $n=2$ and $m=3$, the conjectured series is:
$$t^{2(q^3-1)} + t^{(q^3-q)}\frac{1-t^{q^3-1}}{1-t^{q-1}} + \frac{(1-t^{q^3-1})(1-t^{q^3-q})}{(1-t^{q^2-1})(1-t^{q^2-q})}.$$

The first two terms in the above sum are easy to account for: the first one corresponds to the invariant polynomial $x_1^{q^3-1}x_2^{q^3-1}$ whereas the second one was dealt with in Section~\ref{sec:k=1}.

Let the third term be referred to as $f(t)$ which is a polynomial of degree $2(q^3-q^2)$. An important observation is that $f(1/t) = t^{-2(q^3-q^2)}f(t)$. This implies that the coefficient of $t^a$ equals the coefficient of $t^{2(q^3-q^2)-a}$ in $f(t)$ where $0 \leq a \leq 2(q^3-q^2)$. Now, we try to see how the coefficients of $f(t)$ look like. Now,
\begin{align*}
f(t) &= \frac{(1-t^{q^3-1})(1-t^{q^3-q})}{(1-t^{q^2-1})(1-t^{q^2-q})}
\\&= (1-t^{q^3-1})(1-t^{q^3-q})(1+t^{q^2-1} + t^{2(q^2-1)} + \cdots)(1+t^{q^2-q}+t^{2(q^2-q)}+\cdots).
\end{align*}

\begin{lemma}
For any $a$ between $0$ and $q^3-q^2$, the coefficient of $t^a$ in $f(t)$ is exactly $1$ if $a$ is a non-negative integer linear combination of $q^2-1$ and $q^2-q$, and $0$ otherwise.
\end{lemma}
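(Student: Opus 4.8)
The plan is to read off the coefficient of $t^a$ from the product expansion
\[
f(t) = (1-t^{q^3-1})(1-t^{q^3-q})\Big(\sum_{i\ge 0}t^{i(q^2-1)}\Big)\Big(\sum_{j\ge 0}t^{j(q^2-q)}\Big)
\]
and observe that, in the range $0\le a\le q^3-q^2$, the two factors $(1-t^{q^3-1})$ and $(1-t^{q^3-q})$ contribute nothing beyond their constant term $1$: indeed $q^3-1 > q^3-q^2$ and $q^3-q > q^3-q^2$ (the latter since $q^2>q$), so any term of $f(t)$ involving $t^{q^3-1}$ or $t^{q^3-q}$ has degree strictly larger than $q^3-q^2$. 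Hence for $a$ in this range the coefficient of $t^a$ in $f(t)$ equals the coefficient of $t^a$ in $\big(\sum_i t^{i(q^2-1)}\big)\big(\sum_j t^{j(q^2-q)}\big) = \sum_{i,j\ge 0} t^{i(q^2-1)+j(q^2-q)}$, which is precisely the number of ways to write $a = i(q^2-1)+j(q^2-q)$ with $i,j\ge 0$.

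It then remains to show that this representation, when it exists, is \emph{unique}, i.e. that the number of such $(i,j)$ is at most $1$ for $0 \le a \le q^3-q^2$. Suppose $i(q^2-1)+j(q^2-q) = i'(q^2-1)+j'(q^2-q)$ with $(i,j)\ne(i',j')$. Then $(i-i')(q^2-1) = (j'-j)(q^2-q)$. Since $\gcd(q^2-1,q^2-q) = \gcd(q^2-1,q-1)\cdot(\text{adjustment})$ — more precisely $q^2-q = q(q-1)$ and $q^2-1=(q-1)(q+1)$, so $\gcd(q^2-1,q^2-q) = (q-1)\gcd(q+1,q) = q-1$ — we get that $q^2-q = q(q-1)$ divides $(i-i')(q^2-1)$; writing $q^2-1 = (q-1)(q+1)$ and cancelling $q-1$, we find $q \mid (i-i')(q+1)$, hence $q\mid(i-i')$ as $\gcd(q,q+1)=1$. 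Thus $|i-i'|\ge q$, so the two representations differ by at least $q(q^2-1) = q^3-q$ in the "$i$-part", forcing one of $a$, $a$ to exceed $q^3-q > q^3-q^2$, a contradiction. This pins down uniqueness and completes the argument.

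The main obstacle, such as it is, lies in the bookkeeping of the last paragraph: one must be careful that the bound coming from $|i-i'|\ge q$ genuinely pushes the larger representation out of the range $[0, q^3-q^2]$, and separately handle the symmetric situation where the two representations differ in the $j$-coordinate (there the period is $q^3-q^2$ itself, so any two distinct values of $j$ already differ by $q^3-q^2$, and only $a=0$ and $a=q^3-q^2$ could be reached — $a=0$ by $i=j=0$ only, and $a = q^3-q^2 = q(q^2-q)$, which one checks is not an $i(q^2-1)$-multiple plus a smaller $j$-multiple). Assembling these cases gives: the coefficient is $1$ exactly when $a$ is a non-negative integer combination of $q^2-1$ and $q^2-q$, and $0$ otherwise, as claimed. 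The palindromic symmetry $f(1/t) = t^{-2(q^3-q^2)}f(t)$ noted just before the lemma is a useful consistency check but is not needed for the proof.
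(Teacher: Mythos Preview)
Your argument is correct and matches the paper's proof essentially line for line: both observe that $q^3-1$ and $q^3-q$ exceed $q^3-q^2$, so in this range the coefficient of $t^a$ is the number of representations $a=i(q^2-1)+j(q^2-q)$, and both establish uniqueness by deducing $q\mid(i-i')$ from $(i-i')(q+1)=(j'-j)q$, whence $\max(i,i')\ge q$ and $a\ge q(q^2-1)=q^3-q>q^3-q^2$. Your final paragraph's separate ``$j$-coordinate'' case is unnecessary (if $i=i'$ then automatically $j=j'$, so there is no residual case), and the phrase ``forcing one of $a$, $a$ to exceed'' is a slip---but these are cosmetic and do not affect correctness.
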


\begin{proof}
For $0 \leq a \leq q^3-q^2$, let us look at the coefficient of $t^a$ in the above product. Now, the first $2$ terms of the product will have to contribute $1$ (as $q^3-1$ and $q^3-q$ are greater than $a$). Hence, $a$ must be of the form $m(q^2-1) + n(q^2-q)$ for some $m,n \in \mathbb{N}_0$. In fact, for a given $a$, such a representation is unique. This is because if we have $2$ pairs $(m_1,n_1)$ and $(m_2,n_2)$ such that
$$a=m_1(q^2-1) + n_1(q^2-q)=m_2(q^2-1) + n_2(q^2-q),$$
then,
$$(m_1-m_2)(q+1)=(n_2-n_1)q.$$

And so $q|(m_1-m_2)$. If $m_1 \neq m_2$, one of them will have to be $\geq q$ and that would imply $a \geq q(q^2-1)$ which as a contradiction as $a \leq q^3-q^2$. Hence, $m_1=m_2$ and $n_1=n_2$.
\end{proof}

For $q^3-q^2 \leq a \leq 2(q^3-q^2)$, the coefficient of $t^a$ equals the coefficient of $t^{2(q^3-q^2)-a}$, and as, $0 \leq 2(q^3-q^2)-a \leq q^3-q^2$, we have already determined this coefficient. Hence, all the coefficients in $f(t)$ are either $0$ or $1$. Now, recall that $S=\mathbb{F}_q[x_1,x_2]$ and $Q=S/(x_1^{q^3},x_2^{q^3})$.

\begin{proposition} \label{prop:n=k=m-1}
Corresponding to the term $f(t)$ in the Hilbert series, we can find invariant polynomials in $Q$ which are images of Dickson invariant polynomials in $S$.
\end{proposition}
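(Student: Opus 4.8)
The plan is to exhibit explicit generators for the "$k=2$" piece by starting from the Dickson invariants of $Gl_2(\mathbb{F}_q)$ acting on $S=\mathbb{F}_q[x_1,x_2]$ and pushing them through to $Q=S/(x_1^{q^3},x_2^{q^3})$. Recall from Dickson's theorem that $S^{Gl_2(\mathbb{F}_q)}=\mathbb{F}_q[D_{2,0},D_{2,1}]$, where $\deg D_{2,1}=q^2-q$ and $\deg D_{2,0}=q^2-1$. Both $D_{2,0}$ and $D_{2,1}$ remain invariant after reduction modulo $\mathfrak{m}^{[q^3]}$ since that ideal is $G$-stable; so their images, and all monomials $D_{2,1}^{a}D_{2,0}^{b}$ in them, are invariant elements of $Q$. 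The degrees of these monomials are exactly the integers of the form $a(q^2-q)+b(q^2-1)$, which by the Lemma immediately preceding this proposition are precisely the degrees (each with multiplicity one) appearing in the bottom half $0\le d\le q^3-q^2$ of $f(t)$. So the first step is to check that for each such $d$ the image $D_{2,1}^{a}D_{2,0}^{b}\bmod\mathfrak{m}^{[q^3]}$ is nonzero in $Q$, which amounts to verifying that some monomial in its expansion survives, i.e. has both exponents $<q^3$; since $D_{2,1}^{a}D_{2,0}^{b}$ has total degree at most $2(q^3-q^2)<2(q^3-1)$, and $D_{2,0}$ has $x_1^{q^2-1}\cdots$-type leading behavior, this is a short monomial-bookkeeping argument using the explicit product formula $\prod(t+\ell(\mathbf{x}))=\sum D_{2,i}t^{q^i}$.

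Next I would handle the top half $q^3-q^2\le d\le 2(q^3-q^2)$. Here the naive monomials $D_{2,1}^{a}D_{2,0}^{b}$ of the right degree need no longer be nonzero in $Q$ (their constituent monomials can acquire an exponent $\ge q^3$), so instead I would use the palindromicity $f(1/t)=t^{-2(q^3-q^2)}f(t)$ together with a duality on $Q$. Concretely, multiplication by the top class $z_2=x_1^{q^3-1}x_2^{q^3-1}$ pairs the degree-$d$ part with the degree-$(2(q^3-1)-d)$ part; more usefully, I would pair a degree-$d$ Dickson monomial image with a complementary Dickson monomial so that the product is a nonzero multiple of $z_2$ (the analogue of the relation $y_0y_2=-z_2$ seen in the $m=2$ case), which forces linear independence of the top-half classes from the nonvanishing of the bottom-half ones already established. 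This gives, for every exponent $d$ with $[t^d]f(t)=1$, an invariant in $Q_d$ coming from a Dickson polynomial, and these are linearly independent because their degrees are distinct.

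The main obstacle I expect is precisely the nonvanishing/independence in $Q$: in the polynomial ring $S$ the monomials $D_{2,1}^aD_{2,0}^b$ are a genuine free basis of $S^G$, but modulo $\mathfrak{m}^{[q^3]}$ both vanishing ($D_{2,1}^aD_{2,0}^b\equiv 0$) and unexpected new relations could in principle occur, and ruling these out — especially in the top degree range — is the delicate part. I would address it by tracking a single distinguished monomial in each relevant product (for the bottom half, the "lexicographically extreme" monomial $x_1^{a(q^2-q)+b(q^2-1)}$-type term, which cannot be cancelled and has exponents safely below $q^3$), and by using the $z_2$-pairing to transfer control to the bottom half in the top range. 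A secondary, more structural point to be careful about is that the statement claims these invariants "correspond to" the $f(t)$ term: I would make this precise as the assertion that the span of $\{D_{2,1}^aD_{2,0}^b\bmod\mathfrak{m}^{[q^3]}: a(q^2-q)+b(q^2-1)\le 2(q^3-q^2)\}$ together with the $k=0,1$ families already found has Hilbert series at least the conjectured one in the relevant degrees — the reverse inequality (that there are no further invariants) being left to later sections, consistent with how the paper has proceeded so far.
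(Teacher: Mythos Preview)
Your overall architecture matches the paper's: take the Dickson monomials $D_{2,1}^aD_{2,0}^b$, reduce them modulo $\mathfrak{m}^{[q^3]}$, handle the bottom half $0\le d\le q^3-q^2$ directly, and deduce the top half from a product/pairing argument exploiting the palindromic symmetry of $f(t)$. That is exactly what the paper does.

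Two points where your execution diverges from the paper and where the paper's route is sharper:

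\textbf{Bottom half.} Your ``short monomial-bookkeeping'' is unnecessary. The paper simply observes that for $0\le a\le q^3-q^2$ the polynomial $S_1^mS_0^n$ has total degree $a<q^3$, so every monomial $x_1^ix_2^j$ in it has $i,j<q^3$; hence nothing is killed on reduction and the image in $Q$ is automatically nonzero. No leading-monomial tracking is needed.

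\textbf{Top half.} Your proposed pairing target $z_2=x_1^{q^3-1}x_2^{q^3-1}$ is wrong for this purpose: $z_2$ has degree $2(q^3-1)$, whereas complementary Dickson monomials $S_1^{q-m}S_0^{q-1-n}$ and $S_1^{m}S_0^{n}$ multiply to $S_1^qS_0^{q-1}$, which has degree $2(q^3-q^2)$. These are different elements (and different degrees), so ``pair to a nonzero multiple of $z_2$'' does not apply. The paper instead makes the pairing target $S_1^qS_0^{q-1}$ itself, and the entire weight of the top-half argument rests on the separate Lemma showing $S_1^qS_0^{q-1}\neq 0$ in $Q$. That lemma is proved by an explicit coefficient computation: one writes $S_0=\frac{x_1^{q^2-1}-x_2^{q^2-1}}{x_1^{q-1}-x_2^{q-1}}$, $S_1=S_0x_2^{q-1}-x_2^{q^2-1}$, and checks that the coefficient of $x_1^{q^3-q}x_2^{q^3-2q^2+q}$ in $S_1^qS_0^{q-1}$ equals $-1$. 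Once that is in hand, nonvanishing of each top-half monomial follows from $S_1^{q-m}S_0^{q-1-n}\cdot S_1^mS_0^n=S_1^qS_0^{q-1}\neq 0$, exactly the mechanism you had in mind --- just with the correct anchor.

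So the gap in your proposal is precisely the missing identification and proof of ``$S_1^qS_0^{q-1}\neq 0$''; the socle-duality with $z_2$ does not substitute for it.
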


\begin{proof}
The invariant ring $S^G$ is generated by the polynomials $D_{2,0}$ and $D_{2,1}$ having degrees $q^2-1$ and $q^2-q$ respectively. Let their images in $Q$ be denoted by $S_0$ and $S_1$ respectively.

Now, by manual computation, one sees that $f(t) = 1 + t^{q^2-q} + t^{q^2-1} +$ higher degree terms. As degree($S_0$) = $q^2-q$ and degree($S_1$) = $q^2-1$, these $2$ polynomials correspond to the $2^{nd}$ and $3^{rd}$ monomials of $f(t)$ respectively. For any other $a$ such that $0 \leq a \leq q^3-q^2$, the coefficient of $t^a$ in $f(t)$ is $1$ iff $a=m(q^2-1)+n(q^2-q)$ for some $m,n\in\mathbb{N}_0$. Then, for such an $a$, the polynomial $S_1^mS_0^n$ is an invariant polynomial in $Q$ of degree $a$, and as $a < q^3-1$, we can be sure that $S_1^mS_0^n \neq 0$. An easy observation that will be used later is that, if $0 \leq a < q^3-q^2$, then $m \leq q$ and $n \leq q-1$. Next, we talk about $a$ when $q^3-q^2 < a \leq 2(q^3-q^2)$. 

As will be proved in Lemma~\ref{lem:nzero}, $S_1^qS_0^{q-1}\neq 0$. Then, for any $a$ in the above range, if the coefficient of $t^a$ is non-zero in $f(t)$, then $2(q^3-q^2)-a  = m(q^2-1)+n(q^2-q)$ for some $m,n \in \mathbb{N}_0$. Then, $S_1^{q-m}S_0^{q-1-n}$ is an invariant polynomial of degree $(q-m)(q^2-1) + (q-n-1)(q^2-q) = a$. It is well defined as $m\leq q$ and $n \leq q-1$. Also, $S_1^{q-m}S_0^{q-1-n} \neq 0$ as $S_1^{q-m}S_0^{q-1-n} \times S_1^mS_0^n = S_1^qS_0^{q-1} \neq 0$.
\end{proof}

\begin{lemma} \label{lem:nzero}
Using the same notation as in the previous proposition, $S_1^qS_0^{q-1} \neq 0$. 
\end{lemma}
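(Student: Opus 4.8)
The plan is to compute the product $S_1^q S_0^{q-1}$ explicitly in $Q = \mathbb{F}_q[x_1,x_2]/(x_1^{q^3},x_2^{q^3})$ and exhibit a monomial that survives with nonzero coefficient. Recall the defining identity for the Dickson polynomials in two variables: $\prod_{l}(t+l(\mathbf{x})) = t^{q^2} + D_{2,1}t^q + D_{2,0}t$, where the product runs over the $q^2$ linear functionals $l(\mathbf{x}) = a x_1 + b x_2$. Comparing with the classical factorization, one has $D_{2,0} = \prod_{l \neq 0} l(\mathbf{x})$ (the product of the $q^2-1$ nonzero functionals, of degree $q^2-1$) and $D_{2,1}$ of degree $q^2-q$; moreover there is the well-known expression $D_{2,1} = \sum_{i=0}^{?}\, (\cdots)$, but for our purposes the cleanest route is to use the two standard closed forms $D_{2,0} = (x_1^q x_2 - x_1 x_2^q)^{q-1}$ up to scalars — more precisely $D_{2,0}$ equals the $(q-1)$st power of the determinant-type form $L = x_1 x_2^q - x_1^q x_2$ — and $D_{2,1}$ expressible through the same $L$ together with $x_1^{q^2-q}+\cdots$. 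So first I would pin down convenient explicit representatives for $S_0$ and $S_1$ as images of these forms.

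Next I would observe that the total degree of $S_1^q S_0^{q-1}$ is $q(q^2-q) + (q-1)(q^2-1) = 2(q^3-q^2)$, which is exactly the top degree of $f(t)$; by the palindromicity of $f(t)$ established above, the coefficient of $t^{2(q^3-q^2)}$ in $f(t)$ is $1$, so on degree grounds the space of invariants of this degree is at most one-dimensional (granting the eventual Hilbert series computation — but since here we only need nonvanishing, I will not rely on that). The key step is therefore purely computational: expand $S_1^q S_0^{q-1}$, reduce modulo $(x_1^{q^3},x_2^{q^3})$, and locate one monomial $x_1^{a}x_2^{b}$ with $a,b \le q^3-1$ whose coefficient is a nonzero element of $\mathbb{F}_q$. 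A natural candidate is the ``extreme'' monomial: since $L = x_1 x_2^q - x_1^q x_2$, the form $L^{q-1}$ contains $x_1^{q-1}x_2^{q(q-1)}$ (from always picking the first term) and $x_1^{q(q-1)}x_2^{q-1}$ (always the second), each with coefficient $\pm 1$; raising to further powers and multiplying by the $S_1^q$ factor, I would track the lexicographically extreme monomial, argue it receives a contribution from a unique choice of terms in the expansion, and check its exponents stay below $q^3$.

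The main obstacle I anticipate is twofold. First, getting a usable closed form for $S_1 = $ image of $D_{2,1}$: unlike $D_{2,0}$, the polynomial $D_{2,1}$ is not simply a power of $L$, so I would either use the identity $D_{2,1} = L^{q-1}\cdot(\text{something}) $ coming from the factorization $\prod_{l\neq 0}(t - l) / $ stuff, or — more robustly — use the recursive/Euler-product description $\sum_i D_{2,i} t^{q^i} = \prod_{c \in \mathbb{F}_q}(t^q + c\, x_1^{q-1} t)\cdot(\ldots)$; concretely it is cleanest to use that $D_{2,0}$ and $D_{2,1}$ are, up to sign, the two elementary symmetric-type functions $e$ of the ``Moore determinant'' entries, giving $D_{2,1} = x_1^{q^2-q} + x_1^{q^2-q-(q-1)}x_2^{q-1}+\cdots$ — i.e. $D_{2,1}$ is itself essentially an $a_{m,n,k'}$-type sum. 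Second, once the expansion is written down, the reduction modulo Frobenius powers could in principle kill every surviving monomial; the crux of the argument is to show it does not, which I would do by isolating a single monomial whose exponent vector is forced and lies in the box $[0,q^3-1]^2$, so that no cancellation and no reduction can affect it. In fact, combining this with Proposition~\ref{prop:n=k=m-1}, the cleanest conceptual statement is that the map sending $t^{m(q^2-1)+n(q^2-q)} \mapsto S_1^m S_0^n$ is a bijection onto the claimed invariants, and $S_1^q S_0^{q-1}\neq 0$ is precisely the statement that this top basis element is nonzero; so a reasonable alternative is to first prove that any product $S_1^m S_0^n$ with $(q-m)(q^2-1)+(q-1-n)(q^2-q)\ge 0$ and the ``dual'' product multiply to a fixed nonzero element, reducing everything to the single identity $L^{(q-1)\cdot q}\cdot(\text{explicit factor}) \not\equiv 0$, which is a direct Lucas-theorem check on one binomial-type coefficient.
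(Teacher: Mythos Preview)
Your overall strategy---write $S_0,S_1$ explicitly, expand $S_1^qS_0^{q-1}$, and exhibit one monomial in the box $[0,q^3-1]^2$ with nonzero coefficient---is exactly what the paper does. But what you have written is a plan with branches, not a proof, and the concrete branch you do pursue fails.

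First, a bookkeeping issue: in the previous proposition the paper fixes $\deg S_0=q^2-q$ and $\deg S_1=q^2-1$ (despite the earlier sentence pairing $S_i$ with $D_{2,i}$). With that convention $\deg(S_1^qS_0^{q-1})=q(q^2-1)+(q-1)(q^2-q)=2(q^3-q^2)$; your displayed computation $q(q^2-q)+(q-1)(q^2-1)=2(q^3-q^2)$ is off by $q-1$ and reflects the swapped convention.

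More seriously, your candidate ``lexicographically extreme'' monomial does not survive for $q\ge 3$. With the leading terms $x_1^{q^2-q}$ in $S_0$ and $x_1^{q^2-q}x_2^{q-1}$ in $S_1$, the extreme monomial of $S_1^qS_0^{q-1}$ is $x_1^{2q^3-3q^2+q}x_2^{q^2-q}$, and $2q^3-3q^2+q\ge q^3$ as soon as $q\ge 3$, so it is killed in $Q$. You flag this danger but offer no replacement monomial; the ``alternative'' at the end (a single Lucas-type check on $L^{(q-1)q}\cdot(\text{something})$) is not specified enough to be checkable.

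The paper's proof avoids both difficulties with one identity you do not mention: from $S_0=\frac{x_1^{q^2-1}-x_2^{q^2-1}}{x_1^{q-1}-x_2^{q-1}}$ one has $S_1=S_0\,x_2^{q-1}-x_2^{q^2-1}$, whence (by Frobenius) $S_1^q=S_0^q x_2^{q^2-q}-x_2^{q^3-q}$ and
\[
S_1^qS_0^{q-1}=S_0^{2q-1}x_2^{q^2-q}-S_0^{q-1}x_2^{q^3-q}.
\]
Now pick the monomial $x_1^{q^3-q}x_2^{q^3-2q^2+q}$: the second summand cannot contribute (its $x_1$-degree is at most $(q-1)(q^2-q)<q^3-q$), so the coefficient equals that of $x_1^{q^3-q}x_2^{q^3-3q^2+2q}$ in $S_0^{2q-1}=\Big(\frac{x_1^{q^2-1}-x_2^{q^2-1}}{x_1^{q-1}-x_2^{q-1}}\Big)^{2q-1}$. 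Substituting $x_1=tx_2$ reduces this to the coefficient of $t^{q^2+q}$ in $\big(\tfrac{1-t^{q+1}}{1-t}\big)^{2q-1}$, which one checks is $-1$. That is the missing idea your sketch needs.
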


\begin{proof}
For this proof, we need the explicit form of $S_0$ and $S_1$. $S_1$ is easy to determine from the definition of $D_{2,1}$ whereas for $S_0$, we use Remark~\ref{rem:DI}. So, we have:
\begin{align*}
S_0 &= x_1^{q^2-q} + x_1^{(q-1)^2}x_2^{q-1} + \cdots +x_2^{q^2-q}
\\S_1 &= x_1^{q^2-q}x_2^{q-1} + x_1^{(q-1)^2}x_2^{2(q-1)} + \cdots + x_1^{q-1}x_2^{q^2-q}.
\end{align*}

Easy to see that $S_0 = \frac{x_1^{q^2-1} - x_2^{q^2-1}}{x_1^{q-1}-x_2^{q-1}}$ and $S_1 = S_0x_2^{q-1} - x_2^{q^2-1}$. Then, $S_1^qS_0^{q-1} = S_0^{2q-1}x_2^{q^2-q} - S_0^{q-1}x_2^{q^3-q}$. Then, the coefficient of $x_1^{q^3-q}x_2^{q^3-2q^2+q}$ in $S_1^qS_0^{q-1}$
\\= the coefficient of $x_1^{q^3-q}x_2^{q^3-2q^2+q}$ in $S_0^{2q-1}x_2^{q^2-q}$
\\= the coefficient of $x_1^{q^3-q}x_2^{q^3-3q^2+2q}$ in $S_0^{2q-1}$
\\= the coefficient of $x_1^{q^3-q}x_2^{q^3-3q^2+2q}$ in $\Big(\frac{x_1^{q^2-1} - x_2^{q^2-1}}{x_1^{q-1}-x_2^{q-1}}\Big)^{2q-1}$
\\= the coefficient of $t^{q^3-q}$ in $\Big(\frac{1-t^{q^2-1}}{1-t^{q-1}}\Big)^{2q-1}$ (by substituting $x_1=tx_2$)
\\= the coefficient of $t^{q^2+q}$ in $\Big(\frac{1-t^{q+1}}{1-t}\Big)^{2q-1}$
\\= $-1.$

Thus, as the coefficient is non-zero, $S_1^qS_0^{q-1} \neq 0$.
\end{proof}

We also believe that none of the invariant polynomials talked about in the above proposition are scalar multiples of the invariant polynomials corresponding to the `$k=1$' term of the Hilbert series. If that is true, then we have computed invariants corresponding to every term of the Hilbert series and would not expect any other invariant polynomials to exist. 

\begin{conjecture}
None of the polynomials described by us in Proposition~\ref{prop:n=k=m-1} are scalar multiples of the invariant polynomials corresponding to the `$k=1$' case.
\end{conjecture}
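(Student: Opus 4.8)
The plan is to compare the two families of invariants degree by degree and show they can only coincide in degrees where one of the families is forced to vanish or degenerate. Concretely, the ``$k=1$'' invariants for $n=2$, $m=3$ are the polynomials $a_{3,2,k'} = y_{k'}$ of Proposition~\ref{prop:k=1}, living in degrees $(q^3-q) + k'(q-1)$ for $0 \le k' \le \frac{q^3-q}{q-1} = q^2+q$; the proposed ``$k=2$'' invariants are the monomials $S_1^m S_0^n$ (and the reflected products $S_1^{q-m} S_0^{q-1-n}$) of degree $m(q^2-1) + n(q^2-q)$. The first step is to list the overlap of the two degree sets: a putative coincidence requires $(q^3-q) + k'(q-1) = m(q^2-1) + n(q^2-q)$ with $0 \le k' \le q^2+q$ and, say, $0 \le m \le q$, $0 \le n \le q-1$ (the bounds recorded in the proof of Proposition~\ref{prop:n=k=m-1}). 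Reducing this linear Diophantine relation modulo small powers of $q$ — exactly the style of argument used in the uniqueness Lemma inside Proposition~\ref{prop:n=k=m-1} — should cut the candidate pairs $(m,n,k')$ down to a short, explicit list.

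For each surviving degree, the second step is to exhibit an explicit linear functional on $Q$ that separates the two invariants. The cleanest device is the substitution $x_1 = t x_2$ used in Lemma~\ref{lem:nzero}: both $y_{k'}$ and $S_1^m S_0^n$ become, after dividing out the appropriate power of $x_2$, explicit polynomials in $t$, and one can read off a coefficient of $t^j$ that is nonzero for one family and zero for the other. For the ``$k=1$'' side this is transparent: by the computation in Proposition~\ref{prop:k=1}, $y_{k'}$ specializes (up to a power of $x_2$) to $1 + t^{q-1} + \cdots + t^{q^3-q}$ shifted, i.e. a polynomial all of whose nonzero coefficients are $1$ and which is supported exactly on multiples of $q-1$ in a contiguous range. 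For the ``$k=2$'' side, $S_0 \mapsto \frac{1-t^{q^2-1}}{1-t^{q-1}}$ and $S_1 \mapsto S_0 t^{q-1} - t^{q^2-1}$ (in the $t$-variable), so $S_1^m S_0^n$ specializes to $\big(\frac{1-t^{q^2-1}}{1-t^{q-1}}\big)^{m+n} \cdot (\text{stuff})^m$ times a power of $t$; raising to powers $\ge 2$ produces coefficients other than $0$ and $1$, or produces support that is not a single contiguous block of multiples of $q-1$. Matching these two normal forms forces $m+n \le 1$, which together with the degree bookkeeping from Step~1 should eliminate every candidate.

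The main obstacle I anticipate is the low-degree end, where the ambiguity is genuine: $S_0$ itself has degree $q^2-q = q^3 - q - (q^3 - q^2 - q)\cdot\ldots$ — one must check directly whether $S_0$, $S_1$, $S_1 S_0$, $S_0^2$, etc. happen to land in the degrees $(q^3-q)+k'(q-1)$ of the $k=1$ family, and if so whether the coefficient patterns above really do distinguish them or whether some small-$q$ coincidence (as always, $q=2$ is the danger zone) makes them proportional. I would handle $q=2$ as a separate explicit finite check. A secondary subtlety is that the proposition also produces the ``reflected'' invariants $S_1^{q-m} S_0^{q-1-n}$ in high degrees; by the symmetry $f(1/t) = t^{-2(q^3-q^2)} f(t)$ of the Hilbert series and the corresponding (anti)symmetry $x_i \mapsto x_i^{q^3-1}/x_i$-type duality on $Q$, the high-degree comparison should reduce formally to the low-degree one, but making that reduction precise — i.e.\ checking that the $k=1$ family is sent to itself under the same duality — is the one place where I would expect to need a short extra lemma rather than a routine calculation.
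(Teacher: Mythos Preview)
The statement you are attempting to prove is labelled as a \emph{Conjecture} in the paper, and the paper gives no proof of it whatsoever. Immediately before stating it, the authors write ``We also believe that none of the invariant polynomials talked about in the above proposition are scalar multiples of the invariant polynomials corresponding to the `$k=1$' term of the Hilbert series. If that is true, then\dots'' --- so there is nothing to compare your proposal against. Any correct argument you produce would be new relative to the paper.

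On the merits of your plan: the basic strategy (match degrees, then separate by a specialization such as $x_1 = t x_2$) is sound, and your observation that $y_{k'}$ specializes to a polynomial in $t$ with all nonzero coefficients equal to $1$ on a contiguous block of multiples of $q-1$, whereas a genuine product $S_1^m S_0^n$ with $m+n \ge 2$ does not, is the right kind of distinguishing feature. A concrete instance: at degree $q^3-q$ the $k=2$ family contributes $S_1^q$, which (by Frobenius) has exactly $q$ monomials supported on multiples of $q^2-q$, while $y_0$ has $q^2+q+1$ monomials supported on all multiples of $q-1$; they are visibly not proportional. Two points to tighten. First, for the high-degree range $a > q^3-1$ the product $S_1^{q-m}S_0^{q-1-n}$ computed in $S$ can have monomials with an exponent $\ge q^3$, so you must reduce modulo $(x_1^{q^3},x_2^{q^3})$ \emph{before} specializing $x_1 = t x_2$; your appeal to a duality on $Q$ to transport the high-degree case to the low-degree case is plausible but, as you note, needs an explicit lemma. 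Second, the low end of the overlap deserves care: $y_0$ itself is a Dickson invariant in $S$ (Remark~\ref{rem:DI}), so it \emph{is} literally a polynomial in $S_0,S_1$; the conjecture asserts only that it is not a scalar multiple of the \emph{particular} monomial $S_1^{q-m}S_0^{q-1-n}$ singled out in Proposition~\ref{prop:n=k=m-1}, and you should make sure your argument targets that specific product rather than Dickson invariants in general.
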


Now, we make a brief comment in the case when $n=k=m-1$ for an arbitrary $n$. Then, the corresponding term of the Hilbert series is:
$${n+1 \brack n}_{q,t} = \prod\limits_{i=0}^{n-1} \frac{1-t^{q^{n+1}-q^i}}{1-t^{q^{n}-q^i}}.$$

In the denominator, we see terms having degrees $q^n-q^i$ for $0 \leq i \leq n-1$. This is exactly the set of degrees of the Dickson invariant polynomials $D_{n,0}, D_{n,1}, \cdots, D_{n,n-1}$ in $\mathbb{F}_q[x_1,x_2,\cdots,x_n]$. On computation, one sees that ${n+1 \brack n}$ is again a polynomial all of whose coefficients are 0 or 1. Thus, inspired by our work in the case $n=2$, we make the following conjecture.

\begin{conjecture} \label{conj:n=k=m-1}
For arbitrary $n$ and $m=n+1$, if we consider the $k=n$ term of the conjectured Hilbert series of $Q^G$, then one can find invariant polynomials corresponding to this term of the series that are images of invariant polynomials in $S$.
\end{conjecture}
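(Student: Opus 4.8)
The plan is to mimic the $n=2$ argument of Proposition~\ref{prop:n=k=m-1} and Lemma~\ref{lem:nzero}. Write $\sigma_i$ for the image in $Q=S/\mathfrak{m}^{[q^{n+1}]}$ of the Dickson invariant $D_{n,i}$, so that $\sigma_i\in Q^G$ is homogeneous of degree $q^n-q^i$. The aim is to construct a finite index set $\mathcal A\subseteq\mathbb Z_{\ge 0}^{\,n}$ such that the multiset $\{\sum_{i=0}^{n-1}a_i(q^n-q^i):(a_i)\in\mathcal A\}$ coincides with the multiset of exponents of the polynomial ${n+1\brack n}_{q,t}$, and such that every monomial $\prod_{i=0}^{n-1}\sigma_i^{a_i}$ with $(a_i)\in\mathcal A$ is non-zero in $Q$. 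Since $\prod_i\sigma_i^{a_i}$ is the image of $\prod_i D_{n,i}^{a_i}\in S^G$, and since $n=k=m-1$ makes the prefactor $t^{(n-k)(q^m-q^k)}$ equal to $1$, the $\mathbb F_q$-span of these invariants then accounts exactly for the $k=n$ term ${n+1\brack n}_{q,t}$ of the conjectured series.

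The first step is combinatorial. I would show that ${n+1\brack n}_{q,t}=\prod_{i=0}^{n-1}\frac{1-t^{q^{n+1}-q^i}}{1-t^{q^n-q^i}}$ is a palindromic polynomial of degree $N=nq^n(q-1)$ all of whose coefficients lie in $\{0,1\}$, the coefficient of $t^d$ being $1$ precisely when $d$ admits a (then necessarily unique) expansion $d=\sum_{i=0}^{n-1}a_i(q^n-q^i)$ with $(a_i)\in\mathcal A$. The natural candidate for $\mathcal A$ is the ``box'' $B=\{(a_i):0\le a_0\le q,\ 0\le a_i\le q-1\ \text{for }1\le i\le n-1\}$ --- one checks that $\sum_i e_i(q^n-q^i)=N$ for $(e_i)=(q,q-1,\dots,q-1)$, so $B$ is stable under the involution $(a_i)\mapsto(e_i-a_i)$, which corresponds to $d\mapsto N-d$ --- enlarged by a correction set $C$ of size $1+q+\cdots+q^{n-2}$ that supplies the ``central'' exponents of ${n+1\brack n}_{q,t}$ not realised by $B$; for $n=2$ one has $C=\{(q,0)\}$, matching Lemma~\ref{lem:nzero}, and in general $C$ should be described recursively in terms of the $(n-1)$-variable problem (note $1+q+\cdots+q^{n-2}$ is the value of ${n\brack n-1}_{q,t}$ at $t=1$). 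Uniqueness of the expansion reduces, on writing $q^n-q^i=q^i(q^{n-i}-1)$, to a base-$q$ carrying argument of the kind used for $n=2$ just before Proposition~\ref{prop:n=k=m-1}, and palindromicity then handles the degrees above $N/2$.

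The second step is the non-vanishing of the monomials, and everything here reduces to the single assertion
$$\prod_{i=0}^{n-1} D_{n,i}^{\,e_i}\ \notin\ \mathfrak m^{[q^{n+1}]},\qquad (e_i)=(q,q-1,\dots,q-1),$$
i.e.\ that the top monomial $\prod_i\sigma_i^{e_i}$ is non-zero in $Q$; this is the exact analogue of Lemma~\ref{lem:nzero}. Granting it, for $(a_i)\in B$ the complementary tuple $(e_i-a_i)$ is again in $B$, and $\big(\prod_i\sigma_i^{a_i}\big)\big(\prod_i\sigma_i^{e_i-a_i}\big)=\prod_i\sigma_i^{e_i}\neq 0$, so every box monomial is non-zero; the monomials indexed by $C$ are central and should have total degree below $q^{n+1}$ (for $n=2$ this is $q^3-q^2<q^3$), in which case their lifts $\prod_i D_{n,i}^{a_i}$ lie in the span of the standard monomial basis of $Q$ and hence have non-zero image (otherwise their non-vanishing should again follow by reduction to fewer variables). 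Finally, by the first step distinct members of $\mathcal A$ give monomials of distinct degrees, so the $\prod_i\sigma_i^{a_i}$ with $(a_i)\in\mathcal A$ are linearly independent; their $\mathbb F_q$-span is a graded subspace of $Q^G$, all of whose elements are images of elements of $S^G$, with Hilbert series ${n+1\brack n}_{q,t}$, as required.

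The main obstacle is the top non-vanishing $\prod_i D_{n,i}^{\,e_i}\notin\mathfrak m^{[q^{n+1}]}$. For $n=2$ this was settled by extracting one coefficient, which turned out to be a nonzero element of $\mathbb F_q$; for general $n$ one wants a more robust argument, and I see two plausible routes. The first is to write $\prod_i D_{n,i}^{\,e_i}$, up to a nonzero scalar, as an explicit product of linear forms over $\overline{\mathbb F_q}$, and to locate in it a single monomial $\prod_j x_j^{f_j}$ with $0\le f_j\le q^{n+1}-1$ for all $j$ and nonzero coefficient --- a nearly balanced monomial of total degree $N$, with each $f_j$ close to $q^{n+1}-q^n$, is the natural candidate, though (as already the case $q=2$ shows) the precise choice matters, the coefficient being read modulo $p$. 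The second is to induct on $n$, using the restriction identity $D_{n+1,i}\big|_{x_{n+1}=0}=D_{n,i-1}^{\,q}$ (for $i\ge 1$) together with the freeness of $S$ over the Dickson algebra $S^G$ to transport non-vanishing from $n-1$ to $n$; here one must be careful, since $D_{n,0}$ vanishes on every $\mathbb F_q$-rational hyperplane, so a naive specialisation kills the product and one must first divide out the forced vanishing. Pinning down the correction set $C$ and its recursion is the remaining piece of bookkeeping, and it should parallel the factorisation of ${n+1\brack n}_{q,t}$ itself.
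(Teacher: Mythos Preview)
The statement you are attempting to prove is recorded in the paper as a \emph{conjecture}, not a theorem: the paper does not give a proof. It establishes only the case $n=2$ (Proposition~\ref{prop:n=k=m-1} together with Lemma~\ref{lem:nzero}) and then, ``inspired by our work in the case $n=2$'', formulates Conjecture~\ref{conj:n=k=m-1} for general $n$ without argument. There is therefore no proof in the paper to compare your proposal against.

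Your outline is a reasonable extrapolation of the $n=2$ method, but it is a strategy rather than a proof, and it contains a slip. For $n=2$ the single degree missing from your box $B$ is the central one $q^3-q^2=q(q^2-q)$, so the correction set should be $C=\{(0,q)\}$ (i.e.\ the image of $D_{2,1}^{\,q}$), not $\{(q,0)\}$; the tuple $(q,0)$ already lies in $B$, and Lemma~\ref{lem:nzero} concerns the top corner $(q,q-1)$ of $B$, not an element of $C$. More substantively, you yourself flag the decisive obstruction --- the non-vanishing of $\prod_i D_{n,i}^{\,e_i}$ modulo $\mathfrak m^{[q^{n+1}]}$ --- and neither of your two suggested routes is carried through; the paper offers no guidance here beyond the explicit single-coefficient computation for $n=2$. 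The combinatorial step (that ${n+1 \brack n}_{q,t}$ has $\{0,1\}$ coefficients indexed bijectively by your set $\mathcal A$) is likewise only asserted, with the structure of $C$ for $n\ge 3$ left to an unspecified recursion. In short, you have sketched the shape a proof might take, but the conjecture remains open both in the paper and in your proposal.
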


\begin{remark}
The ideas developed above for the case $n=k=m-1=2$ can be used to work further on the case $n=2$ for an arbitrary $m$. In such a case, the $k=2$ term of the Hilbert series is ${m \brack 2}_{q,t} = \frac{(1-t^{q^m-1})(1-t^{q^m-q})}{(1-t^{q^2-1})(1-t^{q^2-q})}$. Once again, we can talk about non-negative integer linear combinations of $q^2-1$ and $q^2-q$. However, complications start to arise because the coefficients of $t^a$ in this series are not necessarily just $0$ or $1$.
\end{remark}

\section{The case $q=2$} \label{sec:q=2}

Now, we record some results about the structure of the invariant ring when $q=2$. We assume that $n=2$ and consider the conjectured Hilbert series given as (taking $m\geq 3)$:
$$t^{2(2^m-1)} + t^{2^m-2}\frac{1-t^{2^m-1}}{1-t} + \frac{(1-t^{2^m-1})(1-t^{2^m-2})}{(1-t^{3})(1-t^2)}.$$

The $3^{rd}$ term in the above sum has been discussed comprehensively in the previous section. Another conjecture that could be coupled with Conjecture~\ref{conj:n=k=m-1} is that this term corresponds to the images of invariant polynomials in $S$. Throughout this section, we refer to polynomials of the ring $S^G$ as Dickson invariant polynomials.

\begin{conjecture} \label{conj:3rd term}
For $q=n=2$ and arbitrary $m$, the `$k=3$' term of the conjectured Hilbert series corresponds to invariant polynomials of $Q$ that are images of Dickson invariant polynomials in $S$.
\end{conjecture}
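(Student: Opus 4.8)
The plan is to extend Proposition~\ref{prop:n=k=m-1}, which treats $m=3$, to all $m$ (keeping $q=n=2$); for $m\le 3$ the statement is already covered by that proposition and by the triviality ${2\brack 2}_{2,t}=1$, so the real content is $m\ge 4$. Write $D_{2,0}=x_1x_2(x_1+x_2)$ and $D_{2,1}=x_1^2+x_1x_2+x_2^2$ for the two Dickson generators of $S^G=\mathbb{F}_2[D_{2,0},D_{2,1}]$ (the explicit form of $D_{2,1}$ being the one used in Lemma~\ref{lem:nzero} via Remark~\ref{rem:DI}), of degrees $3$ and $2$, and let $\overline{D_{2,0}},\overline{D_{2,1}}$ denote their images in $Q=\mathbb{F}_2[x_1,x_2]/(x_1^{2^m},x_2^{2^m})$. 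Put $f(t):={m\brack 2}_{2,t}=\dfrac{(1-t^{2^m-1})(1-t^{2^m-2})}{(1-t^3)(1-t^2)}$ for the third (`$k=3$') term of the conjectured series, a polynomial of degree $2(2^m-4)$. The target is: for every $a$ with $\mu_a:=[t^a]f(t)>0$, produce $\mu_a$ monomials $\overline{D_{2,0}}^{\,p}\,\overline{D_{2,1}}^{\,q}$ with $3p+2q=a$ that are linearly independent in $Q$; their total span is then a graded subspace of $Q^G$ with Hilbert series exactly $f(t)$. (One should not expect the whole image of $S^G$ in $Q$ to realise $f(t)$: already for $m=3$ it is strictly larger and, in particular, meets the `$k=1$' invariants, e.g.\ $a_{3,2,0}=\overline{D_{2,1}}^{\,3}+\overline{D_{2,0}}^{\,2}$ in $Q$.)

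First I would settle the combinatorics of $f$, generalising the counting lemma just before Proposition~\ref{prop:n=k=m-1}. For $0\le a\le 2^m-4$ the factors $1-t^{2^m-1}$ and $1-t^{2^m-2}$ contribute only their constant terms, so $\mu_a=c_a:=\#\{(p,q)\in\mathbb{Z}_{\ge 0}^2:3p+2q=a\}$, which is exactly the number of Dickson monomials of degree $a$; the palindromic symmetry $f(1/t)=t^{-2(2^m-4)}f(t)$ then handles $2^m-4\le a\le 2(2^m-4)$, and $\mu_a=0$ otherwise. Unlike the case $m=3$, the $\mu_a$ need not be $\le 1$ (already $\mu_6=2$ once $m\ge 4$), so in each degree one must exhibit several independent monomials. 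The low range $a\le 2^m-4$ is direct: $D_{2,0}^{\,p}D_{2,1}^{\,q}=x_1^px_2^p(x_1+x_2)^p(x_1^2+x_1x_2+x_2^2)^q$ has $x_1^{\,a-p}x_2^{\,p}$ as its unique monomial of minimal $x_2$-degree, with coefficient $1$; since $p\le a/3<2^m$ and $a-p<2^m$ this monomial survives in $Q$, and distinct $p$ give distinct surviving monomials, so the matrix of coefficients of the $\overline{D_{2,0}}^{\,p}\overline{D_{2,1}}^{\,q}$ on the $x_1^{\,a-p}x_2^{\,p}$ is unipotent triangular over $Q$; hence all $c_a=\mu_a$ of them are independent there.

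For the high range $2^m-4<a\le 2(2^m-4)$ I would dualise this, using that $Q$ is a complete intersection, hence Gorenstein, with one-dimensional socle in degree $2(2^m-1)$. The auxiliary fact needed is
$$\overline{D_{2,1}}^{\,2^m-1}=x_1^{2^m-1}x_2^{2^m-1}\quad\text{in }Q,$$
so $\overline{D_{2,1}}^{\,2^m-1}$ spans the socle; this follows by induction on $m$ from the Frobenius factorisation $D_{2,1}^{2^m-1}=\prod_{k=0}^{m-1}\bigl(x_1^{2^{k+1}}+x_1^{2^k}x_2^{2^k}+x_2^{2^{k+1}}\bigr)$, by reducing the $k=m-1$ factor modulo $(x_1^{2^m},x_2^{2^m})$ to $x_1^{2^{m-1}}x_2^{2^{m-1}}$ and applying the case $m-1$ to the remaining product $D_{2,1}^{2^{m-1}-1}$. (More generally $\overline{D_{2,1}}^{\,2^m-2^j}=x_1^{2^m-2^j}x_2^{2^m-2^j}$ for $0\le j\le m-1$; the case $j=2$ puts $\overline{D_{2,1}}^{\,2^m-4}=x_1^{2^m-4}x_2^{2^m-4}$ at the very top of the support of $f$.) With this, $\overline{D_{2,0}}^{\,p}\overline{D_{2,1}}^{\,q}\neq 0$ in $Q$ exactly when it pairs nontrivially against $Q_{2(2^m-1)-a}$, and I would extract the $\mu_a=\mu_{2(2^m-4)-a}$ required independent Dickson monomials in degree $a$ by pairing against products of $\overline{D_{2,1}}^{\,2^m-2^j}$ with small powers of $\overline{D_{2,0}}$, mirroring the triangularity of the previous paragraph.

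The hard part will be exactly this high-degree non-vanishing and independence, which was already the delicate point for $m=3$ (Lemma~\ref{lem:nzero}): one must control precisely which monomials of $D_{2,0}^{\,p}D_{2,1}^{\,q}$ land in the admissible box $[0,2^m-1]^2$, and then show that enough of the surviving combinations stay independent, uniformly over the multiplicities $\mu_a>1$. Because $q=2$, all the binomial and multinomial coefficients involved are $0$ or $1$ by Lucas' Theorem, so this reduces to a (tractable but intricate) calculation with the binary digits of $p$, $q$ and $a$. Finally, to square the result with the conjecture following Proposition~\ref{prop:n=k=m-1}, one should check that the subspace produced is independent of the spans of the `$k=0$' invariant $x_1^{2^m-1}x_2^{2^m-1}$ and the `$k=1$' invariants of Section~\ref{sec:k=1}; together these would then account for every term of the conjectured Hilbert series when $q=2$.
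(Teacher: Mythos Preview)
The statement you are addressing is stated in the paper as a \emph{conjecture} (Conjecture~\ref{conj:3rd term}); the paper offers no proof of it. The only related argument in the paper is the special case $m=3$ (Proposition~\ref{prop:n=k=m-1} together with Lemma~\ref{lem:nzero}), which you correctly take as your template. So there is no ``paper's own proof'' to compare against --- your proposal goes well beyond what the paper establishes.

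As a strategy your outline is sound and considerably more precise than anything in the paper. The low-degree half ($a\le 2^m-4$) is essentially a proof: the observation that $D_{2,0}^{\,p}D_{2,1}^{\,q}$ has $x_1^{a-p}x_2^{p}$ as its unique monomial of minimal $x_2$-degree is correct, and the resulting unitriangular system gives independence in $Q$ as claimed. Your Frobenius-product computation of $\overline{D_{2,1}}^{\,2^m-1}=x_1^{2^m-1}x_2^{2^m-1}$ (and more generally $\overline{D_{2,1}}^{\,2^m-2^j}=x_1^{2^m-2^j}x_2^{2^m-2^j}$) is also right; the inductive step works because multiplication by $x_1^{2^{m-1}}x_2^{2^{m-1}}$ in $Q$ amounts to reducing the remaining factor modulo $(x_1^{2^{m-1}},x_2^{2^{m-1}})$. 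Your side remark that the full image of $S^G$ in $Q$ is strictly larger than what $f(t)$ accounts for is a genuine clarification that the paper's phrasing (``corresponds to'') leaves implicit.

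The gap is exactly where you locate it yourself: the high-degree independence is only sketched. Knowing that $\overline{D_{2,1}}^{\,2^m-1}$ generates the socle does not, on its own, produce $\mu_a$ independent Dickson monomials in each degree $a>2^m-4$; you still have to specify, for each such $a$, which $\mu_a$ pairs $(p,q)$ you take and which complementary elements (your proposed products of $\overline{D_{2,1}}^{\,2^m-2^j}$ with small powers of $\overline{D_{2,0}}$) separate them via the Gorenstein pairing. The sentence ``this reduces to a (tractable but intricate) calculation with the binary digits'' names the work but does not do it. As written, the proposal is a credible plan of attack on an open conjecture, not a proof.
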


However, in this section, we are more concerned about the $2^{nd}$ term of the Hilbert series. The $2^{nd}$ term is equal to $t^{2^m-2} + t^{2^m-1} + t^{2^m} + \cdots + t^{2^{m+1}-4}$. From Proposition~\ref{prop:k=1}, we have that the invariant polynomials corresponding to this are given by $y_{k'}$ where $0 \leq k' \leq 2^m-2$ and:
$$y_{k'} = x_1^{2^m-2}x_2^{k'} + x_1^{2^m-3}x_2^{k'+1}+\cdots+x_1^{k'+1}x_2^{2^m-3}+x_1^{k'}x_2^{2^m-2}.$$

The degrees of $y_0$ and $y_1$ are $2^m-2$ and $2^m-1$ respectively. As these are less than $2^m$, the polynomials are actually invariant in $S$ itself. Hence, they are Dickson polynomials. We claim that each $y_k'$ is the image of a Dickson polynomial. To show this, we first note that $D_{2,0}=x_1^2+x_1x_2+x_2^2$. Thus,
\begin{align*}
D_{2,0} \times y_{k'} &= (x_1^2+x_1x_2+x_2^2) \sum\limits_{i=k'}^{2^m-2} x_1^{2^m-2+k'-i}x_2^{i}
\\&=\sum\limits_{i=k'}^{2^m-2} x_1^{2^m+k'-i}x_2^{i}+\sum\limits_{i=k'}^{2^m-2} x_1^{2^m-1+k'-i}x_2^{i+1}+\sum\limits_{i=k'}^{2^m-2} x_1^{2^m-2+k'-i}x_2^{i+2}
\\&=\sum\limits_{i=k'+2}^{2^m-2} x_1^{2^m+k'-i}x_2^{i}+\sum\limits_{i=k'+1}^{2^m-3} x_1^{2^m-1+k'-i}x_2^{i+1}+\sum\limits_{i=k'}^{2^m-4} x_1^{2^m-2+k'-i}x_2^{i+2}
\\&\qquad+x_1^{2^m-1}x_2^{k'+1} + x_1^{2^m-1}x_2^{k'+1} + x_1^{k'+1}x_2^{2^m-1} + x_1^{k'+1}x_2^{2^m-1}
\\&=\sum\limits_{i=k'+1}^{2^m-3} x_1^{2^m+k-1'-i}x_2^{i+1}+\sum\limits_{i=k'+1}^{2^m-3} x_1^{2^m-1+k'-i}x_2^{i+1}+\sum\limits_{i=k'+2}^{2^m-2} x_1^{2^m+k'-i}x_2^{i}
\\&=\sum\limits_{i=k'+2}^{2^m-2} x_1^{2^m+k'-i}x_2^{i}
\\&=y_{k'+2}.
\end{align*}

Thus, $D_{2,0}y_{k'}=y_{k'+2}$, and so, as $y_0$ and $y_1$ are images of Dickson invariant polynomials, we can conclude the same for $y_{k'}$ for all $k'$.

Also, the polynomial $x_1^{2^{m+1}-2} + x_1^{2^{m+1}-3}x_2 + \cdots + x_1x_2^{2^{m+1}-3} + x_2^{2^{m+1}-2}$ is an invariant polynomial in $S^G$ (as a corollary of Remark~\ref{rem:DI}). Its image in $Q$ is $x_1^{2^m-1}x_2^{2^m-1}$ which is the invariant polynomial corresponding to the first term of the Hilbert series. Hence, that too is an image of a Dickson invariant polynomial. We conclude with the following theorem:

\begin{theorem}
Assuming Conjecture~\ref{conj:3rd term}, all the polynomials in the invariant ring $Q^G$ are images of Dickson invariant polynomials, i.e. images of polynomials in $S^G$, when the underlying field is $\mathbb{F}_2$ and $n=2$.
\end{theorem}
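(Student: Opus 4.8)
The plan is to assemble the material already developed in this section, organised according to the three summands of the conjectured Hilbert series
$t^{2(2^m-1)} + t^{2^m-2}\tfrac{1-t^{2^m-1}}{1-t} + \tfrac{(1-t^{2^m-1})(1-t^{2^m-2})}{(1-t^{3})(1-t^{2})}$.
The reduction map $S \twoheadrightarrow Q$ is $\mathbb{F}_2$-linear and $G$-equivariant, so the image of $S^G$ in $Q$ is an $\mathbb{F}_2$-subspace (in fact a subalgebra) $R \subseteq Q^G$; it therefore suffices to exhibit, inside $R$, a spanning set for $Q^G$. Granting the Hilbert-series conjecture (Conjecture~\ref{conj:prob}) in this case --- so that the explicit invariants constructed term by term in Sections~\ref{sec:k=1} and~\ref{sec:n=k=m-1} do span $Q^G$ --- one only needs each of those invariants to lie in $R$.

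For the $k=0$ term, I would take the polynomial obtained from $y_0$ upon replacing $q^m$ by $q^{m+1}$, namely $x_1^{2^{m+1}-2} + x_1^{2^{m+1}-3}x_2 + \cdots + x_2^{2^{m+1}-2}$, which is $G$-invariant in $S$ by Remark~\ref{rem:DI}; all its monomials except $x_1^{2^m-1}x_2^{2^m-1}$ vanish modulo $(x_1^{2^m},x_2^{2^m})$, placing the $k=0$ invariant in $R$. For the $k=1$ term, I would note that $y_0,y_1$ have degrees $2^m-2$ and $2^m-1$, both less than $2^m$, hence are genuine elements of $S$ and, by Proposition~\ref{prop:k=1}, of $S^G$; the identity $D_{2,0}\cdot y_{k'} = y_{k'+2}$ established above then shows by induction on $k'$ that every $y_{k'}$ with $0\le k'\le 2^m-2$ is the image of a product of Dickson invariants. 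The $k=2$ term is exactly the assertion of Conjecture~\ref{conj:3rd term}, which is the hypothesis of the theorem (for $m=3$ it is already unconditional, by Proposition~\ref{prop:n=k=m-1}). Putting the three cases together, a spanning set of $Q^G$ is contained in $R$, so $R=Q^G$ and the theorem follows.

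Almost all of this is already carried out in the displays of this section; the only computational points are the identity $D_{2,0}y_{k'}=y_{k'+2}$ and the collapse of the degree-$(2^{m+1}-2)$ invariant, both routine. The substantive dependence --- the real ``obstacle'' --- is external to this bookkeeping and is twofold: one needs Conjecture~\ref{conj:3rd term} to place the third-term invariants in $S^G$, and one needs to know that the explicitly constructed invariants exhaust $Q^G$, i.e.\ the upper bound in the Hilbert-series conjecture for $q=n=2$, which is not proved here; the theorem is thus best read as conditional on both. One could try to bypass the second dependence by computing the Hilbert series of $R$ directly from $S^G=\mathbb{F}_2[D_{2,0},D_{2,1}]$ (a polynomial ring on generators of degrees $2$ and $3$) and analysing which relations it acquires modulo $(x_1^{2^m},x_2^{2^m})$, but that appears to be no easier than the conjecture itself.
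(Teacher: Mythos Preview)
Your proposal is correct and follows essentially the same route as the paper: the argument preceding the theorem in Section~\ref{sec:q=2} is organised exactly as you describe, treating the three summands of the Hilbert series in turn, using the collapse of the degree-$(2^{m+1}-2)$ invariant for $k=0$, the identity $D_{2,0}\,y_{k'}=y_{k'+2}$ together with $\deg y_0,\deg y_1<2^m$ for $k=1$, and invoking Conjecture~\ref{conj:3rd term} for $k=2$.

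Your explicit flagging of the second dependence---that one must also know the constructed invariants \emph{span} $Q^G$, i.e.\ the upper bound in Conjecture~\ref{conj:prob} for $q=n=2$ and general $m$---is more careful than the paper, which leaves this implicit. The paper has not established that upper bound beyond $m=2$ (Section~\ref{sec:m=2}) and $m=3$ only partially (Section~\ref{sec:n=k=m-1}), so your reading of the theorem as conditional on both conjectures is the honest one.
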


\section{The parabolic conjecture: $m=2$} \label{sec:para}

In this section, we discuss a generalisation of Conjecture~\ref{conj:prob} to parabolic subgroups (see \cite{LRS}). A parabolic subgroup $P_{\alpha}$ of $G$ specified by a composition $\alpha=(\alpha_1,\alpha_2,\cdots,\alpha_l)$ of $n$, so that $|\alpha| := \alpha_1+\alpha_2+\cdots+\alpha_l=n$, is defined to be the subgroup consisting of block upper-triangular matrices of the form:
\[
  \begin{bmatrix}
    g_1 & * &\cdots& * \\
    0 & g_2 &\cdots& * \\
    . & .   &.     & . \\
    . & .   &.     & . \\
    . & .   &.     & . \\
    0 & 0   &\cdots& g_l\\
  \end{bmatrix}
\]
where each diagonal block $g_i$ is an $\alpha_i\times \alpha_i$ matrix. For such subgroups of $G$, we wish to study the structure of $Q^{P_{\alpha}}$ where $Q=\mathbb{F}_q[x_1,x_2,\cdots,x_n]/(x_1^{q^m},x_2^{q^m},\cdots,x_n^{q^m})$. Denoting partial sums of the components of $\alpha$ by $A_i:=\alpha_1+\alpha_2+\cdots+\alpha_i$, we define:
$${n \brack \alpha} := \frac{\prod\limits_{j=1}^{n-1}(1-t^{q^n-q^j})}{\prod\limits_{i=1}^l\prod\limits_{j=0}^{\alpha_i-1}(1-t^{q^{A_i} - q^{A_{i-1}+j}})}.$$
For two compositions $\alpha$ and $\beta$ both of length $l$, we define a partial order such that $\beta \leq \alpha$ iff $\beta_i\leq\alpha_i$ for all $i$. With this definition, we are ready to state the conjecture for the Hilbert series for $Q^{P_{\alpha}}$ as mentioned in \cite{LRS}. Define $B_i=\beta_1+\beta_2+\cdots+\beta_i$.

\begin{conjecture} \label{conj:para}
Fixing $m$, $n$ and a composition $\alpha$ of $n$, we have
$$Hilb(Q^{P_{\alpha}},t) = \sum\limits_{\substack{\beta:\beta\leq\alpha \\ |\beta|\leq m}} t^{e(m,\alpha,\beta)}{m \brack \beta, m-|\beta|}$$
where, $e(m,\alpha,\beta):=\sum\limits_{i=1}^l (\alpha_i-\beta_i)(q^m-q^{B_i}).$
\end{conjecture}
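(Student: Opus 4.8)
The plan is to establish Conjecture~\ref{conj:para} in the case $m=2$ by the two‑step strategy of Section~\ref{subsec:n=arbit}: first exhibit an explicit $\mathbb{F}_q$-basis of $Q^{P_\alpha}$ whose Hilbert series is visibly the stated sum, and then show that nothing else is invariant. Fix conventions so that the unipotent part of $P_\alpha$ is generated by the substitutions $x_s\mapsto x_s+x_j$ with the block of $j$ no later than the block of $s$ (the opposite convention is symmetric). Write $I_1,\dots,I_l$ for the blocks of variables, and $z_T:=\prod_{j\in T}x_j^{q^2-1}$ for a set $T$ of variables; let $e_r$ denote the composition with a single $1$ in position $r$.

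\emph{The basis.} For $m=2$ the admissible $\beta$ (those with $\beta\le\alpha$ and $|\beta|\le2$) are $\beta=0$; $\beta=e_r$ for $1\le r\le l$; $\beta=e_r+e_s$ for $1\le r<s\le l$; and $\beta=2e_r$ for those $r$ with $\alpha_r\ge2$. To these I attach, respectively: $z_{I_1\cup\cdots\cup I_l}$; the polynomials $z_{I_1\cup\cdots\cup I_{r-1}}\cdot y_{\,n-A_{r-1},\,c}$ formed in the variables of $I_r\cup\cdots\cup I_l$, for $0\le c\le q$; the polynomials $z_{I_1\cup\cdots\cup I_{r-1}}\cdot y_{\,A_{s-1}-A_{r-1},\,c}$ formed in the variables of $I_r\cup\cdots\cup I_{s-1}$, for $0\le c\le q$; and $z_{I_1\cup\cdots\cup I_{r-1}}$. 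Here $y_{N,c}$ is the polynomial of Lemma~\ref{lemma:invar}. A short computation matches the degree of each of these polynomials with $e(2,\alpha,\beta)+c(q-1)$; and since $\sum_{c=0}^q t^{c(q-1)}$ is exactly the Gaussian factor ${2\brack\beta,2-|\beta|}$ in the cases $|\beta|=1$ and $\beta=e_r+e_s$ (the remaining cases contributing a single term, with Gaussian factor $1$), the generating function of the whole list equals the conjectured series. These polynomials have pairwise distinct monomial supports, hence are linearly independent.

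\emph{Invariance.} Showing that each listed polynomial lies in $Q^{P_\alpha}$ is the one genuinely new point compared with Lemma~\ref{lemma:invar}. The permutation and diagonal generators act trivially exactly as there; the factor $y_{N,c}$ is $Gl$-invariant in its own block of variables by Lemma~\ref{lemma:invar} and hence invariant under all $P_\alpha$-generators supported on those variables; and a cross‑block generator $x_s\mapsto x_s+x_j$ with $x_j$ a variable of a saturated block $I_1\cup\cdots\cup I_{r-1}$ turns a monomial of $y_{N,c}$ into a sum of terms, all but one of which are divisible by a positive power of $x_j$. Multiplying by the factor $x_j^{q^2-1}$ supplied by $z_{I_1\cup\cdots\cup I_{r-1}}$ pushes each such term into $\mathfrak{m}^{[q^2]}$, so it vanishes in $Q$, and only the original monomial survives. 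By the chosen convention a cross‑block generator never adds a later variable into an earlier one, so no further cases arise; the $\beta=0$ and $\beta=2e_r$ polynomials are handled by the same remarks.

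\emph{No further invariants, and the main obstacle.} For the converse I would induct on $n$, peeling off $x_n$: writing a homogeneous $y\in Q^{P_\alpha}$ as $y=\sum_{i=0}^{q+1}c_ix_n^{i(q-1)}$ with $c_i\in\mathbb{F}_q[x_1,\dots,x_{n-1}]/\mathfrak{m}^{[q^2]}$ (Lemma~\ref{lemma:intro}), one observes that the subgroup of $P_\alpha$ fixing $x_n$ and acting only on $x_1,\dots,x_{n-1}$ is a copy of $P_{\alpha^-}$, where $\alpha^-$ is obtained from $\alpha$ by decrementing its last part (and dropping it if $\alpha_l=1$); this subgroup must fix every $c_i$, so by induction each $c_i$ is a combination of the explicit basis for $\alpha^-$. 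Substituting these forms back and imposing invariance under the remaining generators of $P_\alpha$ — the permutations and unipotents of the last block that move $x_n$, together with the couplings $x_n\mapsto x_n+x_j$ — is the hard part. As in Section~\ref{sec:m=2} one must show that (i) no total degree outside the set $\{e(2,\alpha,\beta)+c(q-1)\}$ occurs, because a single elementary substitution then produces a monomial whose coefficient is a binomial coefficient nonzero mod $p$ by Lucas' theorem; and (ii) within each surviving degree the scalar coefficients of the $c_i$ are forced, so that $y$ is a scalar multiple of one of the listed polynomials. The real difficulty, beyond the $Gl_n$ case already treated, is the combinatorial bookkeeping: the profile $0=B_0\le B_1\le\cdots\le B_l=|\beta|$ may jump at any block and by $1$ or $2$, producing several interacting regimes (saturated blocks carrying $z$-factors, one or two active blocks carrying $y$-factors, trivial tail blocks), and one must control how the cross‑block unipotents interact with this stratification — in particular that they neither annihilate a listed invariant nor force an unexpected linear relation among the $c_i$, and that the loss of the cross‑block transpositions used in Section~\ref{subsec:n=arbit} (which are no longer in $P_\alpha$) can be replaced by combinations of the available couplings. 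Each individual move, however, is the familiar ``apply one elementary substitution, read off a Lucas‑nonzero binomial coefficient'' step used repeatedly in Sections~\ref{sec:m=1}--\ref{subsec:n=arbit}.
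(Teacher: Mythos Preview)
The paper does not prove Conjecture~\ref{conj:para}; it is stated as an open conjecture, and even the $m=2$ case is left open. What the paper actually does for $m=2$ is precisely your first two steps: it writes down the polynomials $a_n$, $b_{r,k}$, $c_{r,s,k}$, $d_r$ (your four families, in the same case split over $\beta$), checks their $P_\alpha$-invariance by the same ``saturated block absorbs cross-block unipotents'' argument you give, and verifies that their degrees match the conjectured series term by term. It then explicitly records the statement that these polynomials form a basis of $Q^{P_\alpha}$ as a \emph{further conjecture}, noting that the missing ingredient is exactly the ``no further invariants'' direction. So your construction and invariance arguments coincide with the paper's treatment; there is nothing to compare beyond that because the paper has no proof.

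Your third step therefore goes beyond the paper, but as written it is a plan rather than a proof, and you yourself flag the obstacle. Two concrete points. First, your linear-independence remark (``pairwise distinct monomial supports'') is not automatic: distinct $\beta$'s can produce invariants of the same total degree (e.g.\ a $d_r$ and a $c_{r',s',k}$ can coincide in degree), so you must actually verify that the supports are disjoint, or argue linear independence directly. Second, the induction on $n$ via peeling off $x_n$ is natural, but the step where you replace the missing cross-block transpositions of Section~\ref{subsec:n=arbit} by ``combinations of the available couplings'' is exactly where the argument is incomplete: in the $Gl_n$ case the transposition $x_1\leftrightarrow x_n$ was used to pin down the scalar in front of each $c_i$, and in $P_\alpha$ that transposition is gone whenever $x_1$ and $x_n$ lie in different blocks. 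You need a genuine substitute --- presumably iterated use of $x_n\mapsto x_n+x_j$ together with Lucas --- and that is the content that is missing, both from your sketch and from the paper.
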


The $n=1$ case of Conjectures~\ref{conj:prob} and \ref{conj:para} coincide, and so, we assume $n\geq 2$ here onwards. The $m=1$ case of this conjecture has already been dealt with in \cite{LRS}. In this section, we analyse the $m=2$ case. 

Our aim is to show that for arbitrary $n$ and $\alpha$, we can find invariant polynomials corresponding to each term in the conjectured Hilbert series. In the summation over $\beta$ in the conjecture, the condition that $|\beta|\leq m=2$ implies that we have 4 cases:
\begin{itemize}
\item $\beta=(0,0,\cdots,0)$
\item $\beta=e_r=(0,0,\cdots, 0 ,1, 0, \cdots, 0)$ where $1$ is in the $r$th position, $1 \leq r \leq l$
\item $\beta=e_r+e_s$ where $1 \leq r, s \leq l$ and $r \neq s$
\item $\beta=2e_r$ where $1\leq r \leq l$ (provided $\alpha_r\geq 2$ as $\beta \leq \alpha$)
\end{itemize}

We'll deal with each case separately and show that invariant polynomials can be found corresponding to each term in the summation. The idea will be to fiddle with the polynomials already constructed in Section~\ref{sec:m=2}. Before we start, we state an easily proved counterpart of Lemma~\ref{lemma:intro1} for parabolic subgroups.
\begin{lemma}
A polynomial is invariant under the action of $P_{\alpha}$ if it is invariant under diagonal matrices, permutations of variables within a block, and the substitutions $x_i \to x_i + x_j$ whenever $1 \leq j < i \leq n$.
\end{lemma}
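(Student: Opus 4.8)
The plan is to mirror the proof of Lemma~\ref{lemma:intro1}: exhibit a generating set of $P_{\alpha}$ built only from diagonal matrices, permutation matrices \emph{within} the blocks, and the transvections corresponding to the substitutions $x_i\to x_i+x_j$ with $j<i$, and then observe that invariance under the diagonal torus upgrades invariance under the normalized transvections $x_i\to x_i+x_j$ to invariance under $x_i\to x_i+cx_j$ for every scalar $c\in\mathbb{F}_q$.

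First I would invoke the Levi decomposition $P_{\alpha}=L_{\alpha}\ltimes U_{\alpha}$, where $L_{\alpha}$ is the block-diagonal subgroup $\prod_{i=1}^{l}Gl_{\alpha_i}(\mathbb{F}_q)$ and $U_{\alpha}$ is the block strictly-upper-triangular unipotent radical. For the Levi factor, applying Lemma~\ref{lemma:intro1} inside each block shows that $Gl_{\alpha_i}(\mathbb{F}_q)$ is generated by its diagonal matrices, the permutations of the variables $x_{A_{i-1}+1},\dots,x_{A_i}$ of that block, and a single transvection; conjugating that transvection by within-block permutations produces every within-block transvection, and in particular every within-block substitution $x_i\to x_i+x_j$ with $j<i$ (the reverse ones $x_j\to x_j+x_i$ are then redundant, being conjugate by the within-block transposition swapping $x_i$ and $x_j$, which itself lies in $P_\alpha$). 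For the unipotent radical, $U_{\alpha}$ is generated by the root subgroups $\{\,I+cE_{ab}:c\in\mathbb{F}_q\,\}$ with $a$ and $b$ in different blocks in the appropriate order --- a standard structural fact about parabolic subgroups of $Gl_n$ valid over any field --- and under the action convention used in this paper each such $I+cE_{ab}$ acts on the polynomial ring as a substitution $x_i\to x_i+cx_j$ with $j<i$. Since $L_{\alpha}$ and $U_{\alpha}$ together generate $P_{\alpha}$, we conclude that $P_{\alpha}$ is generated by the full diagonal torus, the within-block permutations, and the transvections $x_i\to x_i+cx_j$ with $1\le j<i\le n$.

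Next I would carry out the scalar reduction: conjugating $I+E_{ab}$ by $\mathrm{diag}(d_1,\dots,d_n)$ multiplies the off-diagonal entry by $d_ad_b^{-1}$, which ranges over all of $\mathbb{F}_q^{\times}$, so a polynomial invariant under the torus and under the single substitution $x_i\to x_i+x_j$ is automatically invariant under $x_i\to x_i+cx_j$ for every $c$. Combining this with the generation statement, a polynomial invariant under diagonal matrices, permutations within each block, and the substitutions $x_i\to x_i+x_j$ for all $1\le j<i\le n$ is invariant under every generator of $P_{\alpha}$, hence under all of $P_{\alpha}$; conversely every pair $j<i$ has $A$-blocks in the required order, so each listed substitution genuinely comes from an element of $P_{\alpha}$ and no extraneous condition is imposed.

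The argument is essentially bookkeeping, so there is no deep obstacle; the only points that warrant care are (i) invoking the structure of $P_{\alpha}$ through its Levi decomposition and the generation of $U_{\alpha}$ by root subgroups, and (ii) matching the action convention for $Gl_n(\mathbb{F}_q)$ on $\mathbb{F}_q[x_1,\dots,x_n]$ against the block-\emph{upper}-triangular shape chosen for $P_{\alpha}$, which is precisely what forces the inequality $j<i$ (rather than $i<j$) in the statement.
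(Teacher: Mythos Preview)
Your argument is correct and is exactly the kind of fleshing-out the paper intends: the paper does not actually prove this lemma, merely calling it ``an easily proved counterpart of Lemma~\ref{lemma:intro1},'' so your generator-based proof via the Levi decomposition $P_\alpha=L_\alpha\ltimes U_\alpha$, together with the torus-conjugation trick to reduce $x_i\to x_i+cx_j$ to $c=1$, is precisely the implicit argument being gestured at. Your care in point (ii) about matching the block-upper-triangular shape of $P_\alpha$ to the inequality $j<i$ under the paper's action convention is well placed and is the only spot where one could slip.
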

Here, by a permutation within a block, we mean any permutation of the variables $x_{A_{i-1}+1},x_{A_{i-1}+2},\dots,x_{A_i}$ for a fixed $i$ between $1$ and $l$.
\\
\\\textbf{Case 1:} Suppose $\beta=(0,0,\cdots,0)$. Then, $e(m,\alpha,\beta)=|\alpha|(q^m-1)=n(q^2-1)$, and ${2 \brack \beta, 2} = 1$. So, $t^{e(m,\alpha,\beta)}{m \brack \beta, m-|\beta|}=t^{n(q^2-1)}$. Consider the polynomial
$$a_n:=\prod\limits_{j=1}^n x_j^{q^2-1}.$$
We already know that $a_n$ is invariant under the action of $G$. In particular, it is invariant under the action of $P_{\alpha}$. As its degree is $n(q^2-1)$, it corresponds to the $t^{n(q^2-1)}$ term of the Hilbert series.
\\
\\\textbf{Case 2:} Suppose $\beta=e_r$ for some $r$. Then, $e(m,\alpha,\beta) = A_{r-1}(q^2-1) + (n-A_{r-1} - 1)(q^2-q)$ and ${2 \brack \beta, 2} = \frac{1-t^{q^2-1}}{1-t^{q-1}} = 1+ t^{q-1} + t^{2(q-1)} + \cdots + t^{q^2-q}$. So, $t^{e(m,\alpha,\beta)}{m \brack \beta, m-|\beta|}=t^{A_{r-1}(q^2-1)}t^{(n-A_{r-1} - 1)(q^2-q)}(1+ t^{q-1} + t^{2(q-1)} + \cdots + t^{q^2-q})$. For each $r$, consider the polynomials
$$b_{r,k}:=\prod\limits_{j=1}^{A_{r-1}}x_j^{q^2-1} \times \sum\limits_{\substack{i_{A_{r-1}+1}+i_{A_{r-1}+2}+\cdots+i_n=(n-A_{r-1}-1)q+k \\ 0\leq i_{A_{r-1}+1},i_{A_{r-1}+2},\cdots,i_n \leq q }}\bigg(\prod\limits_{j=A_{r-1}+1}^n x_j^{i_j(q-1)}\bigg)$$
where $0 \leq k \leq q$. Let the 2 multiplicands in the above product be $F_{r,k}$ and $G_{r,k}$ respectively. Then, degree of $F_{r,k}$ is $A_{r-1}(q^2-1)$ and degree of $G_{r,k}$ is $\{(n-A_{r-1}-1)q+k\}(q-1)$. Thus, if it is invariant, the polynomial $b_{r,k}$ corresponds to the $t^{A_{r-1}(q^2-1)}t^{\{(n-A_{r-1} - 1)q + k\}(q-1)} = t^{A_{r-1}(q^2-1)}t^{(n-A_{r-1} - 1)(q^2-q)}t^{k(q-1)}$ term of the series. Hence, if we let $k$ vary from $0$ to $q$, we can account for all the terms in $t^{e(m,\alpha,\beta)}{m \brack \beta, m-|\beta|}$.
\\
\\Now, we check the invariance of $b_{r,k}$. As, all exponents are multiples of $q-1$, diagonal matrices leave $b_{r,k}$ invariant. Any permutation of the block $i$ when $1 \leq i \leq r-1$ affects only $F_{r,k}$ and leaves it invariant, and $r \leq i \leq n$ affects only $G_{r,k}$ and leaves it invariant (both due to symmetry). Now, we check the action of substitutions. For $x_i \rightarrow x_i + x_j$ where $j < i \leq r-1$, the substitution affects only $F_{r,k}$ and invariance is trivial as powers greater than or equal to $q^2$ become 0. Next, for $x_i \rightarrow x_i + x_j$ where $r \leq j < i$, the substitution affects only $G_{r,k}$ and we know from Section~\ref{subsec:n=arbit} that this substitution leaves it invariant. Finally, consider the substitution $x_i \rightarrow x_i+x_j$ where $j<r\leq i$. This again leaves $b_{r,k}$ invariant because the exponent of $x_j$ is already $q^2-1$ and any higher powers would become zero. Thus, we conclude that $b_{r,k}$ is an invariant polynomial.
\\
\\\textbf{Case 3:} Suppose $\beta=e_r+e_s$ where $1\leq r<s \leq l$. Then, $e(m,\alpha,\beta) = A_{r-1}(q^2-1) + (A_{s-1}-A_{r-1}-1)(q^2-q)$ and ${2 \brack \beta, 2} = \frac{1-t^{q^2-1}}{1-t^{q-1}} = 1+ t^{q-1} + t^{2(q-1)} + \cdots + t^{q^2-q}$. So, $t^{e(m,\alpha,\beta)}{m \brack \beta, m-|\beta|}=t^{A_{r-1}(q^2-1)}t^{(A_{s-1}-A_{r-1} - 1)(q^2-q)}(1+ t^{q-1} + t^{2(q-1)} + \cdots + t^{q^2-q})$. For each such $r$ and $s$, consider the polynomials
$$c_{r,s,k}:=\prod\limits_{j=1}^{A_{r-1}}x_j^{q^2-1} \times \sum\limits_{\substack{i_{A_{r-1}+1}+i_{A_{r-1}+2}+\cdots+i_{A_{s-1}}=(A_{s-1}-A_{r-1}-1)q+k \\ 0\leq i_{A_{r-1}+1},i_{A_{r-1}+2},\cdots,i_{A_{s-1}} \leq q }}\bigg(\prod\limits_{j=A_{r-1}+1}^{A_{s-1}} x_j^{i_j(q-1)}\bigg)$$
where $0 \leq k \leq q$. Let the 2 multiplicands in the above product be $S_{r,s,k}$ and $T_{r,s,k}$ respectively. Then, degree of $S_{r,s,k}$ is $A_{r-1}(q^2-1)$ and degree of $T_{r,s,k}$ is $\{(A_{s-1}-A_{r-1}-1)q+k\}(q-1)$. Thus, if it is invariant, the polynomial $c_{r,s,k}$ corresponds to the $t^{A_{r-1}(q^2-1)}t^{\{(A_{s-1}-A_{r-1} - 1)q + k\}(q-1)} = t^{A_{r-1}(q^2-1)}t^{(A_{s-1}-A_{r-1} - 1)(q^2-q)}t^{k(q-1)}$ term of the series. Hence, if we let $k$ vary from $0$ to $q$, we can account for all the terms in $t^{e(m,\alpha,\beta)}{m \brack \beta, m-|\beta|}$. The proof that $c_{r,s,k}$ is invariant is exactly the same as in the previous case.
\\
\\\textbf{Case 4:} Suppose $\beta=2e_r$ for some $r$ such that $\alpha_r \geq 2$. Then, $e(m,\alpha,\beta) = A_{r-1}(q^2-1)$ and ${2 \brack \beta, 2} =1$. So, $t^{e(m,\alpha,\beta)}{m \brack \beta, m-|\beta|}=t^{A_{r-1}(q^2-1)}$. For each $r$, consider the polynomial
$$d_r:=\prod\limits_{j=1}^{A_{r-1}} x_j^{q^2-1}.$$
We claim that each $d_r$ is invariant under $P_{\alpha}$, and thus, corresponds to the $t^{A_{r-1}(q^2-1)}$ term of the Hilbert series. $d_r$ is trivially invariant under diagonal matrices. Also, as we are taking product for all variables corresponding to the first $r-1$ blocks, $d_r$ is invariant under the permutation of variables within a block. Finally, for any $j$, if we substitute $x_j \rightarrow x_j + x_k$ where $k<j$, we end up with $d_r$ again as exponents greater than or equal to $q^2$ become zero. Thus, our claim is true.
\\
\\The following example helps illustrate the above cases:

\begin{example}
Consider the case when $q=3$, $n=6$ and $m=2$ and the composition $\alpha=(2,1,3)$ of 6. In Case 1, we take $\beta=(0,0,0)$. The corresponding term of the Hilbert series is $t^{48}$ and the associated invariant polynomial is $a_6=x_1^8x_2^8x_3^8x_4^8x_5^8x_6^8$.

In Case $2$, $\beta=(1,0,0)$ corresponds to the term $t^{30}(1+t^2+t^4+t^6)$. The associated invariant polynomials are the polynomials $4$ polynomials that are invariant under the action of $Gl_6(\mathbb{F}_3)$ (i.e., the $y_{6,k}$'s for $0 \leq k \leq 3$ in the notation of Section~\ref{subsec:n=arbit}). Next, taking $\beta=(0,1,0)$ the corresponding term of the Hilbert series is $t^{16}t^{18}(1+t^2+t^4+t^6)$. The associated invariant polynomials are obtained by taking the product of the monomial $x_1^8x_2^8$ with polynomials in $x_3, x_4, x_5,$ and $x_6$ that are invariant under the action of $Gl_4(\mathbb{F}_3)$ (i.e., the $y_{4,k}$'s for $0 \leq k \leq 4$). Next, take $\beta=(0,0,1)$. The corresponding term in the series is $t^{24}t^{12}(1+t^2+t^4+t^6)$. The associated invariant polynomials will be products of the monomial $x_1^8x_2^8x_3^8$ with polynomials in $x_4, x_5,$ and $x_6$ that are invariant under the action of $Gl_3(\mathbb{F}_3)$ (i.e., the $y_{3,k}$'s for $0 \leq k \leq 4$). For example, 
\\$b_{3,1}=x_1^8x_2^8x_3^8 \Big(x_4^6x_5^6x_6^2 + (x_4^6x_5^4 + x_4^4x_5^6)x_6^4 + (x_4^6x_5^2+x_4^4x_5^4+x_4^2x_6^6)x_6^6 \Big)$.
\\
\\Now  we move on to Case 3. $\beta=(1,1,0)$ corresponds to the term $t^6(1+t^2+t^4+t^6)$. The associated invariant polynomials, the $c_{1,2,k}$'s are the polynomials in $x_1$ and $x_2$ that are invariant under the action of $Gl_2(\mathbb{F}_3)$ (i.e., the $y_{2,k}$'s for $0\leq k \leq 3$). Next, taking $\beta=(1,0,1)$, the corresponding term of the Hilbert series is $t^{24}(1+t^2+t^4+t^6)$ and the associated invariant polynomials are the polynomials in $x_1,x_2,x_3$ that are invariant under the action of $Gl_3(\mathbb{F}_3)$ (i.e., the $y_{3,k}$'s for $0\leq k \leq 3$). Finally, when $\beta=(0,1,1)$, the term of the series is $t^{16}(1+t^2+t^4+t^6).$ The associated invariant polynomials are $x_1^8x_2^8$, $x_1^8x_2^8x_3^2$, $x_1^8x_2^8x_3^4$ and $x_1^8x_2^8x_3^6$.
\\
\\Finally, we look at Case 4. When $\beta=(2,0,0)$, the corresponding term of the Hilbert series is 1 and the associated polynomial is $d_1=1$. For $\beta=(0,0,2)$, the corresponding term is $t^{24}$ and the associated invariant polynomial is $d_3=x_1^8x_2^8x_3^8$. This finishes our analysis of all the cases.
\end{example}

We conclude with the following conjecture:

\begin{conjecture}
Given $m=2$ and arbitrary $n$ and fixing a composition $\alpha$ of $n$, the polynomials $a_n$, $b_{r,k}$'s, $c_{r,s,k}$'s and $d_r$'s provide an additive basis for the invariant ring $Q^{P_{\alpha}}$, thus proving Conjecture~\ref{conj:para} when $m=2$.
\end{conjecture}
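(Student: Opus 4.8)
The plan is to prove the two assertions the conjecture needs: that the polynomials $a_n$, the $b_{r,k}$, the $c_{r,s,k}$ and the $d_r$ are linearly independent, and that together they span $Q^{P_\alpha}$. Cases~1--4 of Section~\ref{sec:para} already show that their degrees realise every monomial of the conjectured series with the right multiplicity, so once independence and spanning are in hand we get $\dim (Q^{P_\alpha})_d = \#\{\text{listed invariants of degree }d\} = [t^d]\,\sum_{\beta\le\alpha,\ |\beta|\le 2} t^{e(2,\alpha,\beta)}{2 \brack \beta,\, 2-|\beta|}$, which is Conjecture~\ref{conj:para} for $m=2$.

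Linear independence is the routine half. Each member of the family is a sum of monomials sharing a recognisable support pattern: $d_r$ and $a_n$ are the single monomials $\prod_{j\le A_{r-1}}x_j^{q^2-1}$ and $\prod_{j\le n}x_j^{q^2-1}$; in every monomial of $b_{r,k}$ the variables $x_1,\dots,x_{A_{r-1}}$ carry exponent $q^2-1$, the variable $x_{A_{r-1}+1}$ carries exponent $\le q^2-q$, and the total degree records $k$; and $c_{r,s,k}$ has the same shape but with every variable of index $>A_{s-1}$ absent. Inspecting the exponent of one suitably chosen variable (for instance $x_{A_{r'-1}}$ to separate different first indices, or $x_{A_{s-1}+1}$ to separate $b$'s from $c$'s) shows that no monomial occurs in two distinct members, while members differing only in $k$ lie in distinct degrees; hence any vanishing linear combination is trivial.

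For spanning I would induct on $n$, imitating Section~\ref{subsec:n=arbit} (the case $l=1$ is exactly the $Gl_n$ theorem proved there, and $n=1$ is immediate). Let $y\in Q^{P_\alpha}$ be homogeneous and write $y=\sum_{i=0}^{k}c_i\,x_n^{i(q-1)}$ with $c_i\in Q':=\mathbb{F}_q[x_1,\dots,x_{n-1}]/(x_1^{q^2},\dots,x_{n-1}^{q^2})$. The subgroup of $P_\alpha$ fixing $x_n$ contains a copy of the parabolic $P_{\alpha'}$ of $Gl_{n-1}$, where $\alpha'=(\alpha_1,\dots,\alpha_{l-1},\alpha_l-1)$ if $\alpha_l\ge 2$ and $\alpha'=(\alpha_1,\dots,\alpha_{l-1})$ if $\alpha_l=1$; comparing powers of $x_n$ shows each $c_i\in (Q')^{P_{\alpha'}}$, so by the inductive hypothesis each $c_i$ is a combination of the constructed $\alpha'$-invariants. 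One then checks (using the recursion that expresses each $\alpha$-family member in powers of $x_n$ with $\alpha'$-family coefficients, in the spirit of Lemma~\ref{lemma:invar}) that it suffices to impose invariance of $y$ under the remaining generators of $P_\alpha$ that move $x_n$: the substitutions $x_n\to x_n+x_j$ for $j<n$, together with, when $\alpha_l\ge 2$, the transpositions of $x_n$ with another last-block variable.

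This last step is the parabolic analogue of Steps~1 and~2 of Section~\ref{subsec:n=arbit}, and I expect essentially all the difficulty to sit here. First, applying $x_n\to x_n+x_j$ and evaluating the resulting binomial coefficients via Lucas' theorem exactly as in those lemmas, one shows that if any $c_i$ contributes a monomial from which a power of $x_n$ can be split off with a nonzero coefficient then $y$ cannot be invariant; this forces the exponent of $x_n$ — and, via the last-block transpositions, the joint exponents of all last-block variables — into the narrow set realised by the constructed invariants, so that the last block behaves like the $Gl_{\alpha_l}$ situation sitting over the already-determined lower blocks, and $\deg y$ is pinned to one of the terms $t^{e(2,\alpha,\beta)}{2 \brack \beta,\, 2-|\beta|}$. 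Once the degree is fixed, a propagation argument in the style of the uniqueness proof in Subsection~\ref{subsec:n=2} — normalise the coefficient of the top monomial of $c_k$ and let the available substitutions $x_n\to x_n+x_j$ (and last-block permutations) force the remaining coefficients one at a time — identifies $y$ with a scalar multiple of the corresponding member of the family. The genuine obstacle is to carry this through uniformly over the four shapes of $\beta$: unlike the $Gl_n$ case, the degree windows attached to different $\beta$'s need no longer be disjoint, and the parabolic restricts which substitutions are available, so the bookkeeping of the partial sums $A_i$ and of which $c_i$ survive at each degree must be done with care. No idea beyond those already used in the $m=2$, $Gl_n$ case appears to be needed.
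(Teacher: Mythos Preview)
The paper does not prove this statement: it is stated as a conjecture, and the sentence immediately following it says explicitly that ``the proof of the above conjecture hinges on showing that, up to scalar multiples, these are the only homogeneous invariant polynomials in $Q$,'' which is left open. So there is no argument in the paper to compare your proposal against; you are attempting something the authors did not do.

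On its own merits your outline is a reasonable plan of attack, and you are candid that the spanning half is only a sketch. Two points deserve emphasis. First, your linear-independence argument asserts that ``no monomial occurs in two distinct members,'' but you should check the boundary cases carefully: for example $b_{r,q}$ and $c_{r,s,q}$ are each a single monomial (every $i_j$ is forced to equal $q$), and one must verify these do not coincide with any $d_{r'}$ or with each other across different index choices; similarly $c_{r,s,0}$ with $A_{s-1}=A_{r-1}+1$ collapses to a product of a prefix and a single variable power. These coincidences are avoidable with the constraint $\alpha_r\ge 2$ on $d_r$ and the distinct supports in the remaining variables, but the claim is not as immediate as you make it sound.

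Second, and more seriously, the spanning step you label ``the genuine obstacle'' really is one. In the $Gl_n$ case of Section~\ref{subsec:n=arbit} the argument works because the possible degrees of a homogeneous invariant are pairwise distinct, so uniqueness up to scalar suffices. In the parabolic case several $\beta$'s can contribute to the same degree (already visible in the paper's own example with $\alpha=(2,1,3)$), so you must show that the graded piece $(Q^{P_\alpha})_d$ has dimension equal to the number of listed invariants of degree $d$, not merely that each listed invariant is determined by a leading coefficient. Your inductive framework gives each $c_i$ as a linear combination of $\alpha'$-invariants, but the substitutions $x_n\to x_n+x_j$ then impose linear relations among those combinations, and you need to bound the solution space from above, not just exhibit solutions. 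Until that linear-algebra count is carried out, the proposal remains a programme rather than a proof.
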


As we have already checked the invariance of the above polynomials, the proof of the above conjecture hinges on showing that, up to scalar multiples, these are the only homogeneous invariant polynomials in $Q$.

\begin{remark}
Among the polynomials defined above, the polynomials $a_n$ and $b_{1,k}$ for $0\leq k\leq q$ are the ones that are invariant under the action of the entire group $G$. In the notation of Section~\ref{subsec:n=arbit}, $a_n$ is equal to $z_n$ and $b_{1,k}$ is equal to $y_{n,k}$ for all $k$.
\end{remark}

\section{Acknowledgements}

I would like to thank the S.N. Bose Scholars Program, specifically Dr. Aseem Z. Ansari, Department of Biochemistry, University of Wisconsin-Madison, and the entire team of Winstep Forward, IUSSTF and SERB for having provided me with this peerless opportunity of getting a hands-on research experience and for facilitating my stay in the United States. I would also like to acknowledge the crucial role played by my mentor Dr. Steven V. Sam, Department of Mathematics, University of Wisconsin-Madison, who provided me a better insight into the field as well as the problem that I was working on, and elaborate discussions with whom helped me systematize my working style. Finally, I wish to thank my fellow Bose-Khorana scholars at UW Madison, spending time with whom was an immense pleasure and a source of great inspiration.

\end{document}